
\documentclass[11pt]{article}%
\usepackage{amssymb,amsmath,amsfonts,amsthm,array,bm,bbm,color,stmaryrd}
\usepackage{amsmath}
\usepackage{amsfonts}
\usepackage{amssymb}
\usepackage{graphicx}%
\setcounter{MaxMatrixCols}{30}
\providecommand{\U}[1]{\protect\rule{.1in}{.1in}}
\setlength{\hoffset}{-0.4mm} \setlength{\voffset}{-0.4mm}
\setlength{\textwidth}{158mm} \setlength{\textheight}{235mm}
\setlength{\topmargin}{0mm} \setlength{\oddsidemargin}{0mm}
\setlength{\evensidemargin}{0mm} \setlength\arraycolsep{1pt}
\setlength{\headsep}{0mm} \setlength{\headheight}{0mm}
\numberwithin{equation}{section}
\newtheorem{theorem}{Theorem}[section]
\newtheorem{lemma}[theorem]{Lemma}
\newtheorem{corollary}[theorem]{Corollary}
\newtheorem{proposition}[theorem]{Proposition}
\newtheorem{remark}[theorem]{Remark}
\newtheorem{example}[theorem]{Example}
\newtheorem{definition}[theorem]{Definition}

\newtheorem{assumption}[theorem]{Assumption}
\def\<{\langle}
\def\>{\rangle}
\def\d{{\rm d}}
\def\L{\mathcal{L}}

\def\E{\mathbb{E}}

\def\P{\mathbb{P}}
\def\R{\mathbb{R}}
\def\T{\mathbb{T}}
\def\Z{\mathbb{Z}}

\def\eps{\varepsilon}

\begin{document}

\title{Quantitative convergence rates for scaling limit of SPDEs with transport noise}
\author{Franco Flandoli\thanks{Email: franco.flandoli@sns.it. Scuola Normale Superiore
of Pisa, Piazza dei Cavalieri 7, 56124 Pisa, Italy.} \quad Lucio
Galeati\thanks{Email: lucio.galeati@iam.uni-bonn.de. Institute for Applied
Mathematics, University of Bonn, Endenicher Allee 60, 53115 Bonn, Germany.}
\quad Dejun Luo\thanks{Email: luodj@amss.ac.cn. Key Laboratory of RCSDS,
Academy of Mathematics and Systems Science, Chinese Academy of Sciences,
Beijing 100190, China, and School of Mathematical Sciences, University of the
Chinese Academy of Sciences, Beijing 100049, China. } }
\maketitle

\begin{abstract}
We consider on the torus the scaling limit of stochastic 2D (inviscid) fluid dynamical
equations with transport noise to deterministic viscous equations.
Quantitative estimates on the convergence rates are provided by combining
analytic and probabilistic arguments, especially heat kernel properties and
maximal estimates for stochastic convolutions. Similar ideas are applied to
the stochastic 2D Keller-Segel model, yielding explicit choice of noise to
ensure that the blow-up probability is less than any given threshold.
Our approach also gives rise to some mixing property for stochastic linear transport equations
and dissipation enhancement in the viscous case.
\end{abstract}

\textbf{Keywords:} Transport noise, scaling limit, convergence rate, mixing, dissipation enhancement, stochastic convolution

\textbf{MSC (2020):} 60H15, 60H50

\section{Introduction}

This paper is mainly concerned with scaling limits for some nonlinear (inviscid) fluid equations perturbed by multiplicative noise of transport type. We already know that, under a suitable scaling of the noise, the stochastic equations converge weakly to some deterministic viscous equation, see e.g. \cite{Gal, FGL, Luo}. These results may be interpreted as the emergence of an \textit{eddy viscosity}, in a fluid with small scale turbulence. However, they are proved mainly by compactness method, and thus the rate of convergence is not known. Our first aim is to find explicit estimate on the convergence rate in such scaling limit results (see Section \ref{subsec-quantitat}). Furthermore, motivated by the above-mentioned works and also by \cite{FlaLuo}, we have shown in the recent paper \cite{FGL2} that transport noise enhances dissipation in a similar scaling limit, hence it can be used to suppress possible explosion of solutions to some nonlinear PDEs. Here we improve the result in \cite{FGL2} by providing quantitative estimates on the probability that the life time of solution is greater than some given $T$; this allows to choose the correct noise in order for the blow-up probability to be less than any initially chosen threshold (cf. Section \ref{subsec-estim-low-up}).

Next, we shall study the convergence rate of a linear inviscid transport equation, with transport noise, to a deterministic parabolic equation (see Section \ref{subsec-mixing} below). In this case, the property we prove is a quantitative version of the so-called mixing, where we can say precisely the \textit{rate of mixing} and the closedness, on a finite time horizon, to the decaying profile of the parabolic equation (cf. \cite{Dolgo, CKRZ, YaoZlatos, Alberti, BedroBlum, GessYar} for related results among the vast literature). Said differently, we prove that an \textit{eddy dissipation} emerges when the mixing is sufficiently turbulent. For linear viscous equation perturbed by the same transport noise, we show in Section \ref{subs-dissip-enhanc} the phenomenon of dissipation enhancement (see e.g. \cite{BedrCoti, FengIyer, CZDE, BedroBlum19}): the solutions converge to equilibrium in $L^2$-norm at an arbitrarily fast exponential speed for suitably chosen noise parameters.

Before giving more precise statements of the results obtained in this paper, let us introduce some frequently used notations.
Given positive numbers $a,b$, we write $a\lesssim b$ if there exists a
constant $C>0$ such that $a\leq C\,b$, $a\sim b$ if $a\lesssim b$ and
$b\lesssim a$; we write $a\lesssim_{\lambda}b$ to stress the dependence
$C=C(\lambda)$. We will mostly work on the torus $\mathbb{T}^{d}%
=\mathbb{R}^{d}/\mathbb{Z}^{d}$ and denote by $\{e_{k}\}_{k\in\mathbb{Z}^{d}}$
the standard Fourier basis $e_{k}(x)=e^{2\pi ik\cdot x}$; we denote by
$H^{s}(\mathbb{T}^{d};\mathbb{R}^{m})$ with $s\in\mathbb{R},m\geq1$ the usual
(possibly vector-valued) Sobolev spaces on $\mathbb{T}^{d}$. Whenever it does
not create confusion we will simply write $H^{s}$, similarly $L^{2}$ in place
of $L^{2}(\mathbb{T}^{d};\mathbb{R}^{m})$. We denote by $\<\cdot, \cdot\>$
the $L^{2}$-inner product or the duality between Sobolev spaces.

\subsection{Quantitative convergence rate for 2D fluid models}\label{subsec-quantitat}

In this section we consider on $\mathbb{T}^{2}$ the stochastic 2D Euler/Navier-Stokes equation
in vorticity form perturbed by transport noise:
\begin{equation}
\label{stoch-NS}%
\begin{cases}
\mathrm{d} \omega+ u\cdot\nabla\omega\,\mathrm{d} t + \circ\mathrm{d} W
\cdot\nabla\omega= \nu\Delta\omega\,\mathrm{d} t ,\\
u=K\ast\omega,
\end{cases}
\end{equation}
where $\nu\geq0$ ($\nu=0$ corresponding to stochastic Euler), $K$ is the Biot-Savart kernel on $\mathbb{T}^{2}$, and $u$ and $\omega$ are the velocity and vorticity of fluid; the stochastic differential is still understood in the Stratonovich sense. The noise $W$, parametrized by $\kappa>0$ and $\theta\in \ell^2(\Z^2_0)$, is defined as
  \begin{equation}\label{noise}
  W(t,x)=\sqrt{2\kappa}\sum_{k\in\mathbb{Z}_{0}^{2}}\theta_{k}\,\sigma_{k}(x)W_{t}^{k},
  \end{equation}
where $\{W^{k}\}_{k\in\mathbb{Z}_{0}^{2}}$ are standard complex Brownian
motions defined on some filtered probability space $(\Omega,\mathcal{F}%
,(\mathcal{F}_{t}),\mathbb{P})$, satisfying $\overline{W^{k}}=W^{-k}$, and
$\{\sigma_{k}\}_{k\in\mathbb{Z}_{0}^{2}}$ are given by
\[
\sigma_{k}(x)=a_{k}e_{k}(x),\quad k\in\mathbb{Z}_{0}^{2},
\]
where $a_{k}\in\mathbb{R}^{2}$ is a unit vector such that $a_{k}\cdot k=0$ and
$a_{k}=a_{-k}$. A typical choice is $a_k= \frac{k^\perp}{|k|}$ for $k\in \Z^2_+$
and $a_k= a_{-k}$ for $k\in \Z^2_-$, where $k^\perp= (k_2,-k_1)^\ast$ and
$\Z^2_0= \Z^2_+ \cup \Z^2_-$ is a partition of $\Z^2_0$ with $\Z^2_+ = - \Z^2_-$.
By construction, the family $\{\sigma_{k}\}_{k\in
\mathbb{Z}_{0}^{2}}$ is a CONS of the subspace of $L^{2}(\mathbb{T}%
^{2};\mathbb{C}^{2})$ consisting of mean zero, divergence free vector fields. It follows from
\eqref{noise} that $W$ is entirely characterized by the pair $(\kappa,\theta)$.

Throughout this paper we will always assume $\Vert\theta\Vert_{\ell^{2}}=1$,
which comes without loss of generality up to relabelling $\kappa$. We further
impose $\theta$ to be symmetric, i.e.
\begin{equation} \label{symmetry}
\theta_{k}=\theta_{l}\quad\mbox{for all }k,l\in\mathbb{Z}_{0}^{2}%
\mbox{ with }|k|=|l|.%
\end{equation}
Under this condition, it is easy to show that the first equation in \eqref{stoch-NS} has the It\^{o} form
\begin{equation}
\mathrm{d}\omega+u\cdot\nabla\omega\,\mathrm{d}t+\mathrm{d}W\cdot\nabla
\omega=(\kappa+\nu)\Delta\omega\,\mathrm{d}t.\label{stoch-NS-Ito}%
\end{equation}
Given $\omega_{0}\in L^{2}$, this equation admits a weak solution (strong in
the probabilistic sense if $\nu>0$) satisfying
\begin{equation}
\sup_{t\geq0}\bigg\{\Vert\omega_{t}\Vert_{L^{2}}^{2}+2\nu\int_{0}^{t}%
\Vert\nabla\omega_{t}\Vert_{L^{2}}^{2}\,\mathrm{d}t\bigg\}\leq\Vert\omega
_{0}\Vert_{L^{2}}^{2}\quad\mathbb{P}\mbox{-a.s.}\label{solu-bounds}%
\end{equation}
According to \cite{FGL}, if we take a sequence $\{\theta^{n}\}_{n}\subset
\ell^{2}$ such that
\begin{equation}
\Vert\theta^{n}\Vert_{\ell^{2}}=1\ (\forall\,n\geq1),\quad\lim_{n\rightarrow
\infty}\Vert\theta^{n}\Vert_{\ell^{\infty}}=0,\label{theta-n}%
\end{equation}
and consider the solutions $\omega^{n}$ of \eqref{stoch-NS} corresponding to
$\theta^{n}$, then for any $\alpha>0$, $\omega^{n}$ converges in probability,
in the topology of $C([0,T];H^{-\alpha})$, to the unique solution of the
deterministic 2D Navier-Stokes equation
\begin{equation}
\partial_{t}\tilde{\omega}+\tilde{u}\cdot\nabla\tilde{\omega}=(\kappa
+\nu)\Delta\tilde{\omega},\quad\tilde{\omega}|_{t=0}=\omega_{0}%
,\label{determ-NS}%
\end{equation}
where $\tilde{u}=K\ast\tilde{\omega}$.

Our first main result gives an explicit estimate on the distance between the
solutions of \eqref{stoch-NS} and \eqref{determ-NS}.

\begin{theorem}
\label{thm-1} Let $\omega$ and $\tilde\omega$ be weak solutions to
\eqref{stoch-NS} and \eqref{determ-NS} respectively and assume $\omega$
satisfies \eqref{solu-bounds}. Then, for any $\alpha\in(0,1)$, there exists
some $C=C(\alpha)>0$ such that for any $\varepsilon\in(0,\alpha]$, one has

\begin{itemize}
\item[\textrm{(i)}] for any $\nu\geq0$,
\[
\mathbb{E}\Big[\|\omega-\tilde\omega\|_{C([0,T];H^{-\alpha})}^{p} \Big]^{1/p}
\lesssim_{\varepsilon, p,T} \kappa^{\varepsilon/2} \|\theta\|_{\ell^{\infty}%
}^{\alpha-\varepsilon} \|\omega_{0}\|_{L^{2}} \exp\bigg[C \frac{1+
T(\kappa+\nu)}{(\kappa+\nu)^{2}} \|\omega_{0}\|_{L^{2}}^{2} \bigg];
\]

\item[\textrm{(ii)}] if $\nu>0$, then
\[
\mathbb{E}\Big[\|\omega-\tilde\omega\|_{C([0,T];H^{-\alpha})}^{p} \Big]^{1/p}
\lesssim_{\varepsilon, p,T} \kappa^{\varepsilon/2} \|\theta\|_{\ell^{\infty}%
}^{\alpha-\varepsilon} \|\omega_{0}\|_{L^{2}} \exp\bigg[ \frac{C}{\nu}
\|\omega_{0}\|_{L^{2}}^{2} \bigg].
\]

\end{itemize}
\end{theorem}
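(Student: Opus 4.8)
The plan is to isolate the noise in a stochastic convolution and treat the rest pathwise. Write $P_t = e^{t(\kappa+\nu)\Delta}$ for the semigroup attached to \eqref{stoch-NS-Ito}, and use that $W$ is divergence free so that $\d W\cdot\nabla\omega = \div(\omega\,\d W)$. Setting $\xi=\omega-\tilde\omega$, I would decompose $\xi = Z + r$, where
\[
Z_t = -\int_0^t P_{t-s}\,\div(\omega_s\,\d W_s)
\]
carries the noise forcing, and $r=\xi-Z$ solves, pathwise and without any It\^o term,
\[
\partial_t r = (\kappa+\nu)\Delta r - \big(u\cdot\nabla\omega - \tilde u\cdot\nabla\tilde\omega\big),\qquad r_0 = 0 .
\]
Since $u\cdot\nabla\omega - \tilde u\cdot\nabla\tilde\omega = u\cdot\nabla(r+Z) + (K\ast(r+Z))\cdot\nabla\tilde\omega$, the whole evolution of $r$ is driven by $\tilde\omega$, by $\omega$ through its coefficients, and by $Z$ as a forcing. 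The theorem then follows from $\|\xi\|_{C([0,T];H^{-\alpha})}\le \|Z\|_{C([0,T];H^{-\alpha})} + \|r\|_{C([0,T];H^{-\alpha})}$ once each summand is estimated in $L^p(\Omega)$.

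For $Z$ I would bound $\E\|Z_t\|_{H^{-\alpha}}^2$ by It\^o isometry. Expanding in Fourier modes and using $\sigma_k=a_ke_k$ with $a_k\cdot k=0$, the $n$-th coefficient of $\div(\omega\,\d W)$ is proportional to $\sqrt\kappa\,\theta_k(a_k\cdot n)\hat\omega_{n-k}$, whence
\[
\E\|Z_t\|_{H^{-\alpha}}^2 \lesssim \kappa\int_0^t\sum_{n}(1+|n|^2)^{-\alpha}\,e^{-c(t-s)(\kappa+\nu)|n|^2}\sum_k \theta_k^2 (a_k\cdot n)^2\,\E|\hat\omega_{n-k}(s)|^2\,\d s .
\]
After substituting $m=n-k$ and using $(a_k\cdot n)^2\le|n|^2$, the weight, the gradient factor and the heat kernel combine into $h_{t-s}(m+k)$ with $h_\tau(j)=|j|^2(1+|j|^2)^{-\alpha}e^{-c\tau(\kappa+\nu)|j|^2}$, while $\sum_m \E|\hat\omega_m(s)|^2\le\|\omega_0\|_{L^2}^2$ controls the $m$-sum. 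The crux is a single H\"older interpolation in the noise coefficients: writing $\theta_k^2\le\|\theta\|_{\ell^\infty}^{2(1-\rho)}\theta_k^{2\rho}$ and using $\|\theta\|_{\ell^2}=1$ turns $\sum_k\theta_k^2 h_\tau(m+k)$ into $\|\theta\|_{\ell^\infty}^{2(1-\rho)}\big(\sum_j h_\tau(j)^{1/(1-\rho)}\big)^{1-\rho}$, independent of $m$, with the frequency sum summable precisely when $\rho>1-\alpha$. Choosing $\rho=1-\alpha+\varepsilon$ and carrying out the remaining time integral gives $\kappa(\kappa+\nu)^{-(1-\varepsilon)}t^{\varepsilon}\le \kappa^{\varepsilon}t^{\varepsilon}$, hence $\E\|Z_t\|_{H^{-\alpha}}^2 \lesssim_{\varepsilon,T} \kappa^{\varepsilon}\|\theta\|_{\ell^\infty}^{2(\alpha-\varepsilon)}\|\omega_0\|_{L^2}^2$, which is exactly the announced rate at the level of the square. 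The passage to the maximal bound $\E\|Z\|_{C([0,T];H^{-\alpha})}^p$ I would obtain by the factorization method, representing $Z$ through a fractional power of the resolvent and applying Burkholder--Davis--Gundy together with Gaussian hypercontractivity for the driving martingale; this costs only a constant $C(p,\varepsilon,T)$.

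For $r$ I would run a Gr\"onwall estimate on $\tfrac{\d}{\d t}\|r_t\|_{H^{-\alpha}}^2$. Testing the equation in $H^{-\alpha}$ produces the dissipation $-2(\kappa+\nu)\|r\|_{H^{1-\alpha}}^2$, which must absorb the transport terms; each bilinear pairing is controlled by product estimates on $\T^2$, by the Biot--Savart bound $\|K\ast f\|_{H^{1-\alpha}}\lesssim\|f\|_{H^{-\alpha}}$, and by the a priori bounds $\|\omega_t\|_{L^2},\|\tilde\omega_t\|_{L^2}\le\|\omega_0\|_{L^2}$ coming from \eqref{solu-bounds}. The pieces containing $Z$ are linear in $\|Z\|_{C([0,T];H^{-\alpha})}$ and reproduce the rate of the previous step, while the terms carrying a derivative on $r$ are dominated by the dissipative term via Young's inequality at the price of a factor $(\kappa+\nu)^{-1}$. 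For $\nu=0$ this absorption must be done for the full transport term, and after Gr\"onwall it yields the exponent $C(1+T(\kappa+\nu))(\kappa+\nu)^{-2}\|\omega_0\|_{L^2}^2$ of part (i). For part (ii) the genuine viscosity furnishes the time-integrated enstrophy bound $2\nu\int_0^\infty\|\nabla\omega_s\|_{L^2}^2\,\d s\le\|\omega_0\|_{L^2}^2$, and feeding $\int_0^T\|\nabla\omega_s\|_{L^2}^2\,\d s\lesssim\nu^{-1}\|\omega_0\|_{L^2}^2$ into the Gr\"onwall coefficient replaces the $T$- and $(\kappa+\nu)$-dependent exponent by the cleaner $C\nu^{-1}\|\omega_0\|_{L^2}^2$.

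The main obstacle I expect is the stochastic convolution estimate: arranging the single H\"older interpolation with $\rho=1-\alpha+\varepsilon$ so that the frequency sum converges for every $\alpha\in(0,1)$ and $\varepsilon\in(0,\alpha]$ while producing \emph{exactly} the exponents $\tfrac{\varepsilon}{2}$ on $\kappa$ and $\alpha-\varepsilon$ on $\|\theta\|_{\ell^\infty}$ (the constraints $\varepsilon>0$ and $\varepsilon\le\alpha$ being forced by summability and by $\rho\le1$), and then transferring this pointwise-in-time $L^2$ bound to the supremum in time and to $L^p(\Omega)$ without degrading the two powers. The Gr\"onwall step is more routine; its only delicate point is to close the negative-regularity estimate using solely the dissipation $(\kappa+\nu)\Delta$ in the inviscid case $\nu=0$, which is what degrades $\nu^{-1}$ into $(\kappa+\nu)^{-2}$.
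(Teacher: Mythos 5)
Your overall architecture is the same as the paper's: pass to the mild/semigroup formulation with $P_t=e^{t(\kappa+\nu)\Delta}$, isolate the stochastic convolution $Z$, bound it in $L^p(\Omega;C([0,T];H^{-\alpha}))$ with the rate $\kappa^{\varepsilon/2}\|\theta\|_{\ell^\infty}^{\alpha-\varepsilon}$, and close the remainder pathwise by Gronwall, feeding in the a priori energy/enstrophy bounds to produce the two exponents in (i) and (ii). Your treatment of $Z$ is a legitimately different route: the paper proves two maximal estimates (an $H^{-\varepsilon}$ bound carrying $\|\theta\|_{\ell^2}$ and an $H^{-d/2-\varepsilon}$ bound carrying $\|\theta\|_{\ell^\infty}$, Lemma \ref{lem:estim-Z}) and interpolates between them, while you do a single H\"older interpolation $\theta_k^2\le\|\theta\|_{\ell^\infty}^{2(1-\rho)}\theta_k^{2\rho}$ inside the Fourier-side It\^o isometry; the exponent bookkeeping ($\rho=1-\alpha+\varepsilon$, forcing $0<\varepsilon\le\alpha$) checks out and reproduces the same rate. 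Two small caveats there: ``Gaussian hypercontractivity'' is not available since the integrand $\nabla\omega_s$ is the random solution of a nonlinear SPDE, but this is harmless because \eqref{solu-bounds} makes the quadratic variation almost surely bounded, so Burkholder--Davis--Gundy alone upgrades to any $p$ (this is exactly what the paper does, followed by Kolmogorov's criterion rather than factorization).

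The genuine gap is in your Gronwall step, specifically in the splitting $u\cdot\nabla\omega-\tilde u\cdot\nabla\tilde\omega=u\cdot\nabla\xi+(K\ast\xi)\cdot\nabla\tilde\omega$. The term $u\cdot\nabla\xi=\nabla\cdot(\xi u)$ requires multiplying the distribution $\xi\in H^{-\alpha}$ by $u=K\ast\omega$, which is only in $H^1(\mathbb{T}^2)$ since $\omega\in L^2$ is all that \eqref{solu-bounds} gives (and for $\nu=0$ nothing better is available); a paraproduct bound for $\xi u$ with $\xi$ of negative regularity $-\alpha$ needs $u\in C^s$ for some $s>\alpha$, and $H^1(\mathbb{T}^2)$ does not embed into any $C^s$, $s>0$. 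So the pairing you would have to estimate is not controlled by the product lemmas at your disposal, and the antisymmetry/commutator route you would need to rescue it is nontrivial and unaddressed. The paper's proof hinges precisely on the \emph{reverse} asymmetric splitting $F(\omega)-F(\tilde\omega)=(K\ast\xi)\cdot\nabla\omega+(K\ast\tilde\omega)\cdot\nabla\xi$ (Lemma \ref{lem:estim-transport-euler}): there the rough factor $\nabla\omega$ (only $\omega\in L^2$) meets the comparatively smooth $K\ast\xi\in H^{1-\alpha}$ (Lemma \ref{lem-estimate}(d)), while $\nabla\xi$ meets $K\ast\tilde\omega\in H^2\hookrightarrow C^s$, $s<1$, exploiting the parabolic regularity $\tilde\omega\in L^2(0,T;H^1)$ of the \emph{deterministic} solution (Lemma \ref{lem-estimate}(b)). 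With that substitution your argument closes and yields exactly the stated exponents; without it, the step fails.
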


In the above estimates, the implicit constants behind the notation
$\lesssim_{\varepsilon, p,T}$ might explode as $\varepsilon\to0$ or
$p,T\to\infty$. Below we give some more comments on the results.

\begin{remark}
\begin{itemize}

\item[\textrm{(1)}] The first estimate is stable in the vanishing viscosity
limit, indeed there is no harm in taking $\nu=0$ as long as $\kappa>0$; the
second one instead has the advantage that $T$ does not appear in the exponential.

\item[\textrm{(2)}] The revelant information in the above estimates comes from
values of $\alpha$ as large as possible, $\alpha\sim1-\varepsilon$. Indeed the
statement for values $\alpha^{\prime}<\alpha$ follows from interpolating the
estimate for $\alpha$ with the uniform bound
\[
\| \omega-\tilde{\omega}\|_{C([0,T];L^{2})} \leq2 \|\omega_{0}\|_{L^{2}}%
\quad\mathbb{P}\mbox{-a.s.}
\]
Similarly, in the case $\nu>0$ one can obtain rates of convergence in
$L^{2}(0,T;H^{s})$ for any $s\in(0,1)$, by interpolating with the uniform
bound in $L^{2}(0,T;H^{1})$ coming from \eqref{solu-bounds}.

\item[\textrm{(3)}] In the above result we assumed for simplicity $\omega
_{0}=\tilde{\omega}_{0}$. If this is not the case, the second estimate
becomes
\[
\mathbb{E}\Big[ \|\omega-\tilde\omega\|_{C([0,T];H^{-\alpha})}^{p} \Big]^{1/p}
\lesssim_{\varepsilon,p,T} \bigg[ \|\omega_{0} - \tilde\omega_{0}%
\|_{H^{-\alpha}}+ \kappa^{\varepsilon/2} \|\theta\|_{\ell^{\infty}}%
^{\alpha-\varepsilon} \|\omega_{0}\|_{L^{2}} \bigg] \exp\bigg[\frac{C}{\nu}
\|\omega_{0}\|_{L^{2}}^{2} \bigg];
\]
similar changes apply to the first estimate.
\end{itemize}
\end{remark}

In practice, we take a sequence of $\{\theta^{n} \}_{n} \subset\ell^{2}$ with
the properties \eqref{symmetry} and \eqref{theta-n}, and consider the
corresponding stochastic equations \eqref{stoch-NS} with $\theta^{n}$ in place
of $\theta$. This together with Theorem \ref{thm-1} will give us explicit
rates of convergence, as shown by the next example.

\begin{example}\label{ex:coefficients}
Consider $\theta^{n}_{k} := \tilde\theta^{n}_{k} /
\|\tilde\theta^{n}\|_{\ell^{2}}$ for some $\tilde\theta^{n} \in\ell^{2},\,
n\geq1$. Let $a\geq0$.

\begin{itemize}
\item[\textrm{(1)}] Define
\[
\tilde\theta^{n}_{k}= \frac1{|k|^{a}} \mathbf{1}_{\{n\leq|k|\leq2n\}}, \quad
k\in\mathbb{Z}^{2}_{0}.
\]
Then $\|\tilde\theta^{n}\|_{\ell^{\infty}}= n^{-a}$ and $\|\tilde\theta
^{n}\|_{\ell^{2}} \sim n^{1-a}$, thus
\[
\|\theta^{n}\|_{\ell^{\infty}} = \frac{\|\tilde\theta^{n}\|_{\ell^{\infty}}%
}{\|\tilde\theta^{n}\|_{\ell^{2}}} \sim\frac1n.
\]
We see that the convergence rate is the same for different value of $a\geq0$.

\item[\textrm{(2)}] Define
\[
\tilde\theta^{n}_{k}= \frac1{|k|^{a}} \mathbf{1}_{\{1\leq|k|\leq n\}}, \quad
k\in\mathbb{Z}^{2}_{0},
\]
then $\|\tilde\theta^{n}\|_{\ell^{\infty}}=1$ and
\[
\|\tilde\theta^{n}\|_{\ell^{2}} \sim%
\begin{cases}
n^{1-a}, & 0\leq a<1;\\
\sqrt{\log n}, & a=1;\\
1, & a>1.
\end{cases}
\]
Hence,
\[
\|\theta^{n}\|_{\ell^{\infty}}= \frac{\|\tilde\theta^{n}\|_{\ell^{\infty}}%
}{\|\tilde\theta^{n}\|_{\ell^{2}}} \sim%
\begin{cases}
n^{a-1}, & 0\leq a<1;\\
(\log n)^{-1/2}, & a=1;\\
1, & a>1.
\end{cases}
\]
The convergence rate in this case strongly depends on $a$. Note that in the
case $a>1$ there is no rate of convergence, indeed we are not in the
hypothesis for the scaling limit to hold; we list it here for the sake of completeness.
\end{itemize}
\end{example}

We provide a simple consequence of the above result to show the power of the
quantitative convergence rates. In the rest of this subsection, we assume
$\nu=0$ and thus we are considering stochastic 2D Euler equations in
\eqref{stoch-NS}. In this case, it is well known that the uniqueness of
solutions remains open for the deterministic 2D Euler equation with $L^{2}%
$-initial vorticity. We have discussed in \cite[Section 6.1]{FGL} the
``approximate uniqueness'' of weak solutions to stochastic 2D Euler equations.
Roughly speaking, it means that the distances between weak solutions of
\eqref{stoch-NS} will vanish if we take a sequence of $\theta^{n}$ as in the
example above. This follows from the scaling limit result since the weak
solutions of \eqref{stoch-NS} converge weakly to the unique solution of the
deterministic 2D Navier-Stokes equation \eqref{determ-NS} with $\nu=0$; see
\cite[Corollary 6.3]{FGL} for a qualitative statement. Thanks to Theorem
\ref{thm-1}, we can now provide a more explicit estimate on the distances
between weak solutions.

To this end, we fix some $\alpha\in(0,1)$ and denote by $\mathcal{X}:=
C([0,T],H^{-\alpha})$; we also write $\mathcal{L}_{\theta}$ for the collection
of laws of weak solutions to \eqref{stoch-NS}, with a fixed initial condition
$\omega_{0} \in L^{2}(\mathbb{T}^{2})$; $\mathcal{L}_{\theta}$ can be regarded
as a subset of the space $\mathcal{P}(\mathcal{X})$ of Borel probability
measures on $\mathcal{X}$. We endow the space $\mathcal{P}(\mathcal{X})$ with
the Wasserstein distance: for $Q,Q^{\prime}\in\mathcal{P}(\mathcal{X})$,
\[
d_{p}(Q,Q^{\prime})= \bigg[ \inf_{\pi\in\mathcal{C}(Q,Q^{\prime})}
\int_{\mathcal{X }\times\mathcal{X}} \|\omega- \omega^{\prime}\|_{\mathcal{X}%
}^{p} \,\mathrm{d}\pi(\omega, \omega^{\prime}) \bigg]^{1/p} ,
\]
where $\mathcal{C}(Q,Q^{\prime})$ is the collection of probability measures on
$\mathcal{X }\times\mathcal{X}$ having $Q$ and $Q^{\prime}$ as the first and
the second marginal measures. Then we have the following simple result which
quantifies the distances between elements in $\mathcal{L}_{\theta}$.

\begin{corollary}
\label{cor-approx-uniq} Assume $\nu=0$ in \eqref{stoch-NS}; then for any
$p\geq1$ and $T>0$, we have
\[
d_{p}(Q,Q^{\prime}) \lesssim_{p,T} \kappa^{\alpha/4} \|\theta\|_{\ell^{\infty
}}^{\alpha/2} \|\omega_{0}\|_{L^{2}} \exp\bigg[\frac{C}{\kappa^{2}}
(1+T\kappa) \|\omega_{0}\|_{L^{2}}^{2} \bigg]
\]
for any $Q,Q^{\prime}\in\mathcal{L}_{\theta}$. In particular, assuming that
$\theta^{n}$ is defined as in (1) of Example \ref{ex:coefficients}, then
\[
d_{p}(Q_{n},Q^{\prime}_{n}) \lesssim_{p,T} \frac{\kappa^{\alpha/4}}%
{n^{\alpha/2}} \|\omega_{0}\|_{L^{2}} \exp\bigg[\frac{C}{\kappa^{2}}
(1+T\kappa) \|\omega_{0}\|_{L^{2}}^{2} \bigg]
\]
for any $Q_{n},Q^{\prime}_{n}\in\mathcal{L}_{\theta_{n}}$.
\end{corollary}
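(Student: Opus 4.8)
The plan is to exploit the fact that, with $\nu=0$, every weak solution of \eqref{stoch-NS} is close (in the sense quantified by Theorem \ref{thm-1}) to one and the same object: the unique solution $\tilde\omega$ of the deterministic 2D Navier--Stokes equation \eqref{determ-NS}, which with $\nu=0$ has strictly positive viscosity $\kappa$ and hence admits a unique solution. Since $\tilde\omega$ is deterministic, its law is the Dirac mass $\delta_{\tilde\omega}\in\mathcal{P}(\mathcal{X})$, and I will use this as an intermediate point. Because $d_p$ is a genuine metric on $\mathcal{P}(\mathcal{X})$ for $p\geq1$, the triangle inequality gives
\[
d_p(Q,Q')\leq d_p(Q,\delta_{\tilde\omega})+d_p(\delta_{\tilde\omega},Q').
\]
This reduces the (a priori awkward) problem of coupling two laws supported on possibly different probabilistic setups to two separate comparisons against a fixed deterministic profile.

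The second key observation is that the Wasserstein distance to a Dirac mass collapses: the only coupling of $Q$ with $\delta_{\tilde\omega}$ is the product $Q\otimes\delta_{\tilde\omega}$, so that
\[
d_p(Q,\delta_{\tilde\omega})=\bigg[\int_{\mathcal{X}}\|\omega-\tilde\omega\|_{\mathcal{X}}^{p}\,\mathrm{d}Q(\omega)\bigg]^{1/p}=\E\big[\|\omega-\tilde\omega\|_{C([0,T];H^{-\alpha})}^{p}\big]^{1/p},
\]
where $\omega$ is any weak solution with law $Q$. The same identity holds for $Q'$. At this point each term on the right-hand side is exactly the quantity estimated in Theorem \ref{thm-1}(i), so applying that theorem with $\nu=0$ bounds both summands and the factor $2$ from the triangle inequality is absorbed into $\lesssim_{p,T}$.

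It remains only to match exponents. Theorem \ref{thm-1}(i) with $\nu=0$ yields, for every $\varepsilon\in(0,\alpha]$, a bound of the form $\kappa^{\varepsilon/2}\|\theta\|_{\ell^{\infty}}^{\alpha-\varepsilon}\|\omega_0\|_{L^2}\exp[C\kappa^{-2}(1+T\kappa)\|\omega_0\|_{L^2}^2]$. Choosing $\varepsilon=\alpha/2$ produces precisely $\kappa^{\alpha/4}\|\theta\|_{\ell^{\infty}}^{\alpha/2}$, which is the stated prefactor; the exponential factor is already in the desired form. The second assertion then follows by inserting the rate $\|\theta^n\|_{\ell^{\infty}}\sim 1/n$ computed in part (1) of Example \ref{ex:coefficients}, giving $\|\theta^n\|_{\ell^{\infty}}^{\alpha/2}\sim n^{-\alpha/2}$. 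I do not anticipate any genuine obstacle here: the only conceptual point is the triangle-inequality-through-the-deterministic-limit device together with the Dirac-mass collapse, after which everything is a direct quotation of Theorem \ref{thm-1} with the single bookkeeping choice $\varepsilon=\alpha/2$.
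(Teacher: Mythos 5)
Your proposal is correct and follows essentially the same route as the paper: triangle inequality through the Dirac mass $\delta_{\tilde\omega}$ of the unique deterministic solution, collapse of the Wasserstein distance to a Dirac into an expectation, and then Theorem \ref{thm-1}(i) with $\nu=0$ and $\varepsilon=\alpha/2$. The only (harmless) addition is your explicit remark that the coupling with a Dirac mass is unique, which the paper uses implicitly.
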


The results in this part will be proved in Section \ref{subsec-proof}. We
remark that our method for deriving estimates in Theorem \ref{thm-1} is quite
general (see Section \ref{subsec-heuristic} for a brief description), thus we
can deal with other fluid models such as the 2D Boussinesq system and the
modified Surface Quasi-Geostrophic (mSQG for short) equations. We will present
the related results in Sections \ref{sec:boussinesq} and \ref{sec:mSQG}.

\subsection{Explicit estimates on blow-up probability}

\label{subsec-estim-low-up}

In this part we are concerned with nonlinear PDEs exhibiting a dichotomy
between global solutions for small initial data and finite time blow-up for
large ones. A famous example is the 2D Keller-Segel system (cf. \cite{Patlak,
KS70, KS71})
\begin{equation}
\label{keller-segel}%
\begin{cases}
\partial_{t} \rho= \Delta\rho-\chi\nabla\cdot(\rho\nabla c),\\
-\Delta c = \rho-\rho_{\Omega},
\end{cases}
\end{equation}
where $\Omega\subset\mathbb{R}^{2}$ is a regular bounded domain and
$\rho_{\Omega} =\int_{\Omega} \rho(x)\, \mathrm{d} x$. Here $\rho:\Omega
\to\mathbb{R}$ describes the evolution of a bacterial population density whose
motion is biased by the density of a chemoattractant $c:\Omega\to\mathbb{R}$
produced by the population itself; $\chi>0$ is a fixed sensitivity parameter,
which will be taken as 1 in the sequel. It was shown in \cite{JL92} that for
$\rho_{\Omega}(0)$ below a critical threshold, global existence of regular
solutions $(\rho,c)$ holds, while there exist radially symmetric solutions
blowing up in finite time if $\Omega$ is a disk; see also \cite[Theorem
8.1]{KX16} for similar examples when $\Omega=\mathbb{T}^{2}$. The blow-up
mechanism is due to mass concentration and formation of Diracs for $\rho$.

In the recent paper \cite{FGL2}, we have shown that transport noise delays
blow-up for large initial data with high probability; this idea works for a
large class of nonlinear PDEs, including \eqref{keller-segel}. However, as in
\cite{FGL, FlaLuo}, the method in \cite{FGL2} is again based on compactness
arguments and thus we were only able to prove that, under some natural
conditions on the nonlinear term, for any given $T>0$ and small $\varepsilon
>0$, there exist $\kappa>0$ and $\theta\in\ell^{2}$ (determining the noise $W$
in \eqref{eq:noise-higherdim}) such that the corresponding solution has a life time greater
than $T$, with probability greater than $1-\varepsilon$. In this work, we will
give quantitative estimate on the blow-up probability, which in turn yields
explicit choices of $(\kappa,\theta)$.

As in \cite{FGL2}, the approach works for a large class of PDEs, but we will
mostly focus on equation \eqref{keller-segel} in order to convey the main
ideas underlying it. We fix the domain $\Omega=\mathbb{T}^{2}$ with periodic
boundary condition, $\chi=1$ and consider the stochastic Keller-Segel system
\begin{equation}
\label{stoch-keller-segel}%
\begin{cases}
\mathrm{d} \rho= \big[\Delta\rho- \nabla\cdot(\rho\nabla c)\big]\, \mathrm{d}
t + \circ\mathrm{d} W\cdot\nabla\rho\\
-\Delta c = \rho- \rho_{\mathbb{T}^{2}}%
\end{cases}
\end{equation}
with initial data $\rho_{0}\in L^{2}$; the noise is given as in \eqref{noise}.

Since $W$ is spatially divergence free, the stochastic equation enjoys the
same energy estimate as \eqref{keller-segel} and hence local existence and
uniqueness of a maximal solution can be shown similarly to the deterministic
case (see \cite{FGL2} for a rigorous proof); still, solutions might blow up in
finite time. Given initial value $\rho_{0}\in L^{2}$, $\theta\in\ell^{2}$ and
noise intensity $\kappa>0$, we denote by $\tau(\rho_{0};\theta, \kappa)$ the
blow-up time of the unique local solution to \eqref{stoch-keller-segel}. Here is the last main result of the paper.

\begin{theorem}
\label{thm-3} Fix $\varepsilon\in(0,1)$, $p\in[1,\infty)$, $L,T>0$. Then there
exist constants $C_{1}$ and $C_{2}=C_{2}(\varepsilon,p,L,T)$ with the
following property: for any tuple $(\rho_{0},\kappa,\theta)$ such that $\|
\rho_{0}\|_{L^{2}}\leq L$, $\kappa\geq1 + C_{1} L^{2}$ and $\theta\in\ell^{2}$
satisfying \eqref{symmetry} with $\| \theta\|_{\ell^{2}}=1$, it holds
\[
\mathbb{P}(\tau(\rho_{0}; \theta,\kappa)< T) \leq C_{2}\, \kappa^{\varepsilon
p/4}\, \| \theta\|_{\ell^{\infty}}^{p(1-\varepsilon)}.
\]

\end{theorem}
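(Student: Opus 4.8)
The plan is to reduce the blow-up probability to a quantitative comparison between the stochastic Keller-Segel solution $\rho$ and the corresponding deterministic limit $\tilde\rho$, exactly as in Theorem~\ref{thm-1}, and then exploit the fact that the deterministic equation has a global, regular solution for the relevant range of parameters. First I would observe that for $\kappa \geq 1 + C_1 L^2$ with $\|\rho_0\|_{L^2} \leq L$, the deterministic Keller-Segel system with diffusion coefficient $\kappa$ (the limit of \eqref{stoch-keller-segel}) falls below the critical mass threshold from \cite{JL92, KX16}, so its solution $\tilde\rho$ exists globally and stays bounded in a good norm on $[0,T]$; in particular $\tilde\rho$ does not blow up before time $T$. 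The constant $C_1$ should be chosen precisely to guarantee this subcriticality, independently of $\varepsilon, p, L, T$.

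The heart of the argument is then a stopping-time decomposition. The idea is that if $\rho$ blows up before $T$, then on the event $\{\tau < T\}$ the stochastic solution must, at some time before blow-up, deviate substantially from the smooth deterministic profile $\tilde\rho$ in the negative Sobolev norm $H^{-\alpha}$. More precisely, I would define a stopping time $\tau_M$ at which some controlling norm of $\rho$ (for instance an $L^2$ or $H^s$ norm along the solution) first reaches a large threshold $M$, so that up to $\tau_M$ the nonlinear term is controlled and the difference $\rho - \tilde\rho$ can be estimated by a quantitative convergence-rate argument of the same flavour as Theorem~\ref{thm-1}. On the event $\{\tau < T\}$ one necessarily has $\tau_M < T$ for $M$ large, forcing $\|\rho - \tilde\rho\|_{C([0,\tau_M]; H^{-\alpha})}$ to be at least a fixed positive distance away from $\tilde\rho$ (since $\tilde\rho$ stays bounded while $\rho$ escapes). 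Hence
\[
\mathbb{P}(\tau < T) \leq \mathbb{P}\Big( \|\rho - \tilde\rho\|_{C([0,T \wedge \tau_M]; H^{-\alpha})} \geq \delta \Big)
\]
for a suitable $\delta = \delta(M, \tilde\rho)$, and a Chebyshev/Markov inequality in $L^p(\Omega)$ converts this into
\[
\mathbb{P}(\tau < T) \leq \delta^{-p}\, \mathbb{E}\Big[ \|\rho - \tilde\rho\|_{C([0,T\wedge\tau_M]; H^{-\alpha})}^p \Big].
\]

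I would then bound the right-hand expectation by a stopped analogue of the estimate in Theorem~\ref{thm-1}(ii): running the same combination of heat-kernel smoothing and maximal inequalities for stochastic convolutions, but now on the stochastic interval $[0, T\wedge\tau_M]$ where the nonlinearity is tamed by the stopping, should yield a bound of the form $\kappa^{\varepsilon/2}\|\theta\|_{\ell^\infty}^{\alpha - \varepsilon}$ times a constant depending on $M, L, T$ and on the regularity of $\tilde\rho$. Choosing $\alpha$ close to $1$ and relabelling $\varepsilon$, $p$ appropriately produces the claimed exponents $\kappa^{\varepsilon p/4} \|\theta\|_{\ell^\infty}^{p(1-\varepsilon)}$, with all $M$- and $\delta$-dependence absorbed into $C_2 = C_2(\varepsilon, p, L, T)$.

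The main obstacle I anticipate is making the stopping-time comparison fully rigorous: the convergence rate in Theorem~\ref{thm-1} was stated for solutions enjoying the global a priori energy bound \eqref{solu-bounds}, whereas here $\rho$ may blow up, so all estimates must be localised to $[0, T\wedge\tau_M]$ and I must verify that the heat-kernel and stochastic-convolution estimates survive this localisation with constants that do not degenerate as $M \to \infty$ faster than $\delta^{-p}$ grows. In particular, I need that the threshold distance $\delta$ separating the escaping $\rho$ from the bounded $\tilde\rho$ can be taken uniform (essentially dictated by $\sup_{[0,T]}\|\tilde\rho\|$), and that the quantitative convergence estimate on the stopped interval decays in $\|\theta\|_{\ell^\infty}$ at the required rate uniformly in $M$. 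Balancing the growth in $M$ against the decay in the noise parameters, and checking that the subcriticality condition $\kappa \geq 1 + C_1 L^2$ indeed forces global regularity of $\tilde\rho$ with the needed quantitative bounds, is where the delicate bookkeeping lies.
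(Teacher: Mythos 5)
Your overall architecture (compare with the global deterministic solution for $\kappa\gtrsim L^2$, reduce $\{\tau<T\}$ to a large-deviation event for $\|\rho-\tilde\rho\|_{H^{-\alpha}}$, then apply Markov plus a Theorem~\ref{thm-1}-type rate) matches the paper's, but the localisation step as you have set it up does not work, and the device the paper uses to fix it is exactly the idea your proposal is missing. You stop at the first time $\tau_M$ that a \emph{strong} norm ($L^2$ or $H^s$) of $\rho$ reaches $M$, and then claim that on $\{\tau<T\}$ the solution must be at a fixed distance $\delta$ from $\tilde\rho$ in $H^{-\alpha}$. This implication is false: since $\|\cdot\|_{H^{-\alpha}}\leq\|\cdot\|_{L^2}$, a function can have $L^2$ norm equal to $M$ while its $H^{-\alpha}$ norm (and its $H^{-\alpha}$ distance to the bounded profile $\tilde\rho$) stays small, so no uniform $\delta=\delta(M,\tilde\rho)>0$ exists. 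Moreover, even granting such a $\delta$, your stopped Gronwall estimate carries a factor like $\exp(C M^{q}T/\kappa)$ from the nonlinearity on $[0,\tau_M]$, and you would have to send $M\to\infty$ to exhaust the blow-up event, which destroys the bound; your own closing paragraph flags this balancing act but does not resolve it.

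The paper avoids both problems by truncating the nonlinearity with a cut-off $g_R(\|u\|_{H^{-\alpha}})$ in the \emph{weak} norm (equation \eqref{eq:keller-segel-cutoff-new}) and proving the nontrivial a priori estimate of Lemma \ref{lem:keller-segel-cutoff-new}: via the interpolation $\|u\|_{L^3}^3\lesssim\|\nabla u\|_{L^2}^{(1+3\alpha)/(1+\alpha)}\|u\|_{H^{-\alpha}}^{2/(1+\alpha)}$ and Young's inequality, control of $\|u\|_{H^{-\alpha}}$ alone yields a \emph{global} $L^2$ bound $K_{\varepsilon,L,T}$ for the cut-off equation, with $R=2L$ fixed in terms of $L$ only. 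Consequently (Remark \ref{rem:keller-segel}) blow-up occurs if and only if $\|u\|_{H^{-\alpha}}$ exceeds every threshold, so $\{\tau<T\}\subset\{\sup_{t\leq T}\|u_t-\tilde u_t\|_{H^{-\alpha}}>L\}$ directly, with no $M\to\infty$ limit and with all Gronwall constants depending only on $(\varepsilon,L,T)$. This weak-norm cut-off lemma is the essential missing ingredient; without it, the reduction from strong-norm blow-up to a weak-norm deviation event cannot be closed. (A secondary inaccuracy: global existence of $\tilde\rho$ here is not a subcritical-mass statement in the sense of \cite{JL92,KX16} --- the mass is unchanged by $\kappa$ --- but comes from the enhanced dissipation $(1+\kappa)\Delta$ dominating the nonlinearity when $\kappa\geq C\|\rho_0\|_{L^2}^2+1$, as in Lemma \ref{lem:basin-KS-new}.)
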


\begin{remark}
The constant $C_{1}$ does not depend on $(\varepsilon,p,L,T)$, but it depends
on $d=2$ and the domain $\Omega=\mathbb{T}^{2}$, due to the application of
Poincar\'e inequality and Sobolev embedding in the proof. Both constants
$C_{1}$ and $C_{2}$ can be calculated explicitly, see Section
\ref{sec:blow-up} for more details. It follows from the above estimate that,
once $\kappa$ is fixed as above, choosing $\theta^{n}$ as in Example
\ref{ex:coefficients}-(1) yields
\[
\mathbb{P}(\tau(\rho_{0};\theta^{n},\kappa)<T) \lesssim n^{-p(1-\varepsilon)}
\]
i.e. the probability of blow-up before time $T$ decreases with arbitrarily
high polynomial rate.
\end{remark}

\subsection{Some results on mixing and dissipation enhancement}

It turns out that, in the linear case, our approach also leads to some interesting (though possibly weaker) results on the mixing property and dissipation enhancement due to transport noise. In this section, we shall work on the general torus $\mathbb{T}^{d}\, (d \geq 2)$ and consider the following noise which is similar to \eqref{noise}:
  \begin{equation}\label{eq:noise-higherdim}
  W(t,x)= \sqrt{C_{d} \kappa}\, \sum_{k\in
  \mathbb{Z}^{d}_{0}}\sum_{i=1}^{d-1} \theta_{k} \sigma_{k,i}(x) W^{k,i}_{t},
  \end{equation}
where $C_{d}=d/(d-1)$ is a normalizing constant, $\kappa>0$ is still the noise intensity and $\theta\in\ell^{2} =\ell^{2}(\mathbb{Z}_{0}^{d})$ is symmetric in $k\in \Z_0^d$; $\{W^{k,i}:k\in\mathbb{Z}^{d}_{0}, i=1,\ldots,d-1\}$ are standard complex Brownian motions satisfying
\[
\overline{W^{k,i}} = W^{-k,i}, \quad\big[W^{k,i},W^{l,j} \big]_{t}= 2t
\delta_{k,-l} \delta_{i,j};
\]
$\{\sigma_{k,i}: k\in\mathbb{Z}^{d}_{0}, i=1,\ldots,d-1\}$ are divergence free vector fields on $\T^d$ defined as
  $$\sigma_{k,i}(x) = a_{k,i} e_{k}(x),$$
where $\{a_{k,i}\}_{k,i}$ is a subset of the unit sphere $\mathbb{S}^{d-1}$ such that: i) $a_{k,i}=a_{-k,i}$ for all
$k\in\mathbb{Z}^{d}_{0}$; ii) for fixed $k$, $\{a_{k,i}\}_{i=1}^{d-1}$ is a
ONB of $k^{\perp}=\{y\in\mathbb{R}^{d}:y\cdot k=0 \}$. In this way,
$\{\sigma_{k,i}\}_{k,i}$ is a CONS of the subspace of $L^{2}(\mathbb{T}%
^{d};\mathbb{C}^{d})$ of mean zero, divergence free vector fields.

\subsubsection{Quantitative finite horizon mixing}\label{subsec-mixing}

In the first part, we consider on $\mathbb{T}^{d} $ the stochastic transport equation
\[
\mathrm{d}f+\circ\mathrm{d}W\cdot\nabla f=0
\]
which has the It\^{o} form
\begin{equation}\label{stoch-transp-Ito}
\mathrm{d}f+\mathrm{d}W\cdot\nabla f=\kappa\Delta f\,\mathrm{d}t.
\end{equation}
Given $f_{0}\in L^{\infty}$, this equation admits a weak $L^{\infty}$-solution
satisfying
  \begin{equation} \label{solu-bounds transp}
  \mathbb{P}\mbox{-a.s.}\quad \sup_{t\geq0} \Vert f_{t}\Vert_{L^{p}}=\Vert f_{0}\Vert_{L^{p}}%
  \end{equation}
for every $p\in\left[  1,\infty\right]$.
Indeed, if $\theta$ enjoys suitable summability (e.g. $\sum_k |k|^2 \theta_k^2 <\infty$) we can construct the stochastic flow $\{X_t\}_{t\geq 0}$ associated to $W$ and represent the solution as $f_t(x) = f_0(X^{-1}_t(x))$ (see for instance \cite[Proposition 2.3]{Zha}); $W$ being divergence-free implies incompressibility of $X_t$ and thus \eqref{solu-bounds transp}. The result can then be generalized to any $\theta\in\ell^2$ by classical compactness arguments.

The expected value $\overline
{f}_{t}=\mathbb{E}\left[  f_{t}\right]  $ is a weak $L^{\infty}$-solution of
the deterministic heat equation%
\begin{equation}\label{heat-eq}
\partial_{t}\overline{f}_{t}=\kappa\Delta\overline{f}_{t}%
\end{equation}
which decays exponentially in $L^{2}$-norm, as opposed to $f_{t}$ which is
$L^{2}$-norm-preserving. However, in the weak sense, $f_{t}$ and $\overline
{f}_{t}$ are close to each other if $\Vert\theta\Vert_{\ell^{\infty}}$ is small;
more precisely:

\begin{theorem}\label{thm-transport}
For every $\phi\in L^{2}(\mathbb{T}^{d})$ and all $t\geq0$,
\begin{equation}\label{eq:thm-transport-eq1}
\mathbb{E}\left[  \left\vert \left\langle f_{t},\phi\right\rangle
-\left\langle \overline{f}_{t},\phi\right\rangle \right\vert ^{2}\right]
\leq\Vert\theta\Vert_{\ell^{\infty}}^{2}\Vert f_{0}\Vert_{L^{\infty}}^{2}%
\Vert\phi\Vert_{L^{2}}^{2}.
\end{equation}
If $\chi$ is a smooth mollifier, then%
\begin{equation}\label{smeared}
\mathbb{E}\left[  \Vert \chi\ast f_{t}- \chi\ast\overline{f}_{t}\Vert_{L^{2}}%
^{2}\right]  \leq\Vert\theta\Vert_{\ell^{\infty}}^{2}\Vert f_{0}%
\Vert_{L^{\infty}}^{2}\Vert \chi\Vert_{L^{2}}^{2}.%
\end{equation}

\end{theorem}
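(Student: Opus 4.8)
```latex
The plan is to derive both estimates by working in Fourier space and exploiting the fact that $f_t$ solves the It\^o transport equation \eqref{stoch-transp-Ito}, while $\overline f_t$ solves the heat equation \eqref{heat-eq}. First I would introduce the difference $g_t := f_t - \overline f_t$ and test the equations against the Fourier mode $e_m$. Writing $\hat f_t(m) = \langle f_t, e_{-m}\rangle$ (and likewise for $\overline f$), I expect the $L^2$-preservation property \eqref{solu-bounds transp} to play a central role: since $\mathbb{E}[f_t] = \overline f_t$, the quantity $\langle f_t,\phi\rangle - \langle \overline f_t,\phi\rangle$ has mean zero, so the left-hand side of \eqref{eq:thm-transport-eq1} is exactly the variance of $\langle f_t,\phi\rangle$. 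The natural route is therefore to compute $\mathbb{E}\big[|\langle f_t,\phi\rangle|^2\big] - |\langle \overline f_t,\phi\rangle|^2$ directly.

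To control the variance I would apply the It\^o formula to the product $\langle f_t,\phi\rangle\,\overline{\langle f_t,\psi\rangle}$ (or to $|\langle f_t,\phi\rangle|^2$ taking $\psi=\phi$), obtaining an equation for the second moment. The drift terms $\kappa\Delta$ and the transport part should recombine, when one subtracts the corresponding deterministic evolution of $\langle \overline f_t,\phi\rangle$, into a structure governed solely by the It\^o correction coming from the quadratic variation of the noise $W\cdot\nabla f$. The key computation is the quadratic-variation bracket: using the noise expansion \eqref{eq:noise-higherdim} together with the covariance $[W^{k,i},W^{l,j}]_t = 2t\,\delta_{k,-l}\delta_{i,j}$, the martingale part contributes a term whose integrand involves $\sum_{k,i}\theta_k^2\,|\langle \sigma_{k,i}\cdot\nabla f_s,\phi\rangle|^2$ (schematically). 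Bounding $\theta_k^2 \le \|\theta\|_{\ell^\infty}^2$ and using that $\{\sigma_{k,i}\}$ forms a CONS lets me sum over $k,i$ via Parseval, replacing the weighted sum by an unweighted $L^2$-pairing, which is where the factor $\|\theta\|_{\ell^\infty}^2$ is extracted cleanly.

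The main obstacle I anticipate is handling the differential operator $\sigma_{k,i}\cdot\nabla$ acting on $f_s$ without losing a derivative: a naive Parseval bound would produce $\|\nabla f_s\|_{L^2}^2$, which is not controlled by the transport dynamics (only $\|f_s\|_{L^p}$ is conserved). The resolution should be to integrate by parts so that the gradient falls on the smooth test function $\phi$ rather than on $f_s$ — i.e.\ rewrite $\langle \sigma_{k,i}\cdot\nabla f_s,\phi\rangle = -\langle f_s, \sigma_{k,i}\cdot\nabla\phi\rangle$, using $\operatorname{div}\sigma_{k,i}=0$. Then after the CONS summation one is left with $\|f_s\|$-type quantities paired against $\nabla\phi$, and the $L^\infty$-bound on $f_s$ from \eqref{solu-bounds transp} closes the estimate. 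One must verify that the cross terms and the remaining drift contributions either cancel against the deterministic evolution of $\overline f_t$ or are nonpositive, so that only the noise-induced variance survives with the stated constant.

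Finally, the smeared estimate \eqref{smeared} follows from \eqref{eq:thm-transport-eq1} by a duality/summation argument: writing $\|\chi\ast g_t\|_{L^2}^2 = \sum_m |\widehat{\chi\ast g_t}(m)|^2$ and applying \eqref{eq:thm-transport-eq1} with $\phi$ chosen as the mollified Fourier modes, then summing over $m$ and using $\sum_m |\hat\chi(m)|^2 = \|\chi\|_{L^2}^2$ by Parseval, should reproduce the right-hand side with $\|\chi\|_{L^2}^2$ in place of $\|\phi\|_{L^2}^2$. The care needed here is ensuring the single-mode variance bounds can be summed, which holds because the bound in \eqref{eq:thm-transport-eq1} is linear in $\|\phi\|_{L^2}^2$ and the mollifier provides the summable weights $\hat\chi(m)$.
```
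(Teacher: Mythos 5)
Your toolkit is the right one---an It\^o second--moment computation, integration by parts onto the test function via $\operatorname{div}\sigma_{k,i}=0$, the CONS/Parseval step to extract $\|\theta\|_{\ell^\infty}^2$, and the $L^\infty$ conservation law---and your derivation of \eqref{smeared} by summing single--mode bounds against the weights $|\hat\chi(m)|^2$ is sound. But there is a gap at the decisive step. After integrating by parts, the quadratic--variation contribution with a \emph{fixed} test function $\phi$ is of order $\kappa\,\|\theta\|_{\ell^\infty}^2\|f_0\|_{L^\infty}^2\, t\,\|\nabla\phi\|_{L^2}^2$, which requires $\phi\in H^1$ and grows linearly in $\kappa t$; this is not the stated bound. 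Your plan to discard the drift because it is ``nonpositive,'' so that ``only the noise-induced variance survives,'' therefore cannot close the estimate: the drift is precisely what must \emph{absorb} that growth. Moreover, for general $\phi\in L^2$ the drift term $2\kappa\,\mathbb{E}\,\mathrm{Re}\big(\overline{\langle g_t,\phi\rangle}\,\langle g_t,\Delta\phi\rangle\big)$ is not even sign-definite (it is dissipative only when $\phi$ is a single Fourier mode), and the general-$\phi$ estimate does not follow from the single-mode variances because the Fourier coefficients of $g_t$ are correlated.

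The paper's resolution is to use the mild (Duhamel) formulation rather than the weak one: $\langle f_t,\phi\rangle-\langle\overline f_t,\phi\rangle=\sqrt{C_d\kappa}\,\sum_{k,i}\theta_k\int_0^t\langle f_s,\sigma_{k,i}\cdot\nabla P_{t-s}\phi\rangle\,\mathrm{d}W^{k,i}_s$ with $P_t=e^{t\kappa\Delta}$, so that after the It\^o isometry, the bound $\theta_k^2\le\|\theta\|_{\ell^\infty}^2$, the CONS/Parseval step and $\|f_s\|_{L^\infty}\le\|f_0\|_{L^\infty}$, one is left with $2\kappa\int_0^t\|\nabla P_{t-s}\phi\|_{L^2}^2\,\mathrm{d}s$; the heat-semigroup energy identity then gives $2\kappa\int_0^t\|\nabla P_s\phi\|_{L^2}^2\,\mathrm{d}s=\|\phi\|_{L^2}^2-\|P_t\phi\|_{L^2}^2\le\|\phi\|_{L^2}^2$, uniformly in $t$ and $\kappa$ and for $\phi$ merely in $L^2$. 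Equivalently, in your Fourier-mode language, you must keep the damping factor $e^{-4\pi^2\kappa|m|^2(t-s)}$ inside the time integral of the quadratic variation (and, for general $\phi$, track the full covariance matrix $\mathbb{E}\big[\hat g_t(m)\overline{\hat g_t(m')}\big]$, not only its diagonal) before summing over modes. Without this ingredient the argument does not yield \eqref{eq:thm-transport-eq1}.
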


The proof will be presented in Section \ref{subs-inviscid-transport}; see \cite[Theorem 1.1]{FGL3} for related results concerning stochastic heat equations on bounded domains, where the difficulty lies in dealing with Dirichlet boundary condition.

The above result
could be interpreted under the light of the concept of mixing of
passive scalars under suitable transport coefficients. Strictly speaking, what
is usually called mixing is the property
\[
\lim_{t\rightarrow\infty}\left\langle f_{t},\phi\right\rangle =0
\]
for all test functions $\phi$ of a suitable class, reformulated also by means
of decay to zero of negative Sobolev norms and improved to exponential decays
in most of the available results. The deterministic literature on the subject
is very large, see for instance \cite{Alberti, BedrCoti}. Mixing by
random transport has been proved in two outstanding works, first in the case
of white noise in time (as in our model) in \cite{Dolgo}, then in the case
when the random velocity field is the solution of stochastic equations,
including Navier-Stokes, see \cite{BedroBlum}. Compared to such mixing
results, the above theorem misses the decay at infinity; our result only claims that
$\left\langle f_{t},\phi\right\rangle $ decays similarly to $\left\langle
\overline{f}_{t},\phi\right\rangle $ as soon as $\Vert\theta\Vert
_{\ell^{\infty}}$ is so small that the number $\Vert\theta\Vert
_{\ell^{\infty}}^{2}\Vert f_{0}\Vert_{L^{\infty}}^{2}\Vert\phi\Vert_{L^{2}%
}^{2}$ is smaller than $e^{-\kappa t}$ (hence only on a finite time horizon).
However, it contributes additional information and it has some advantages: i)
the decay rate $\kappa$ is related to the noise in a very simple way; ii) the
shape of the random process $f_{t}$, suitably smeared (see \eqref{smeared}), is
close to the shape of the decaying solution $\overline{f}_{t}$ of the heat
equation; iii) the technique extends to nonlinear problems, as shown in the
main body of the paper.

Our result states that if the parameters $\theta_{k}$ have small norm
$\Vert\theta\Vert_{\ell^{\infty}}$, the solution of the stochastic problem
is close to the solution of the heat equation; and if the intensity $\kappa$ of
the noise is large, the solution of the heat equation decays fast to zero, so
does the solution of the stochastic problem on a finite time interval. In a
sense, with this result the theory of mixing meets the theory of turbulent
diffusion, see \cite{MajdaKramer}.

\begin{remark}
The case of Kraichnan noise, including the particular case of Kolmogorov 41
scaling, is included in the previous example and it may be useful to see the
meaning of our conditions on $\kappa$ and $\Vert\theta\Vert_{\ell^{\infty}}$
in such a case. The divergence free part of Kraichnan noise, on the $d$-dimensional torus
$\mathbb{T}^{d}$, is usually defined by means of the covariance,
space-homogeneous, given by the matrix-function%
\[
Q\left(  z\right)  =\sigma^{2}\sum_{\left\vert k\right\vert \geq k_{0}}%
\frac{k_{0}^{\zeta}}{\left\vert k\right\vert ^{d+\zeta}} \bigg({\rm Id} -\frac{k\otimes
k}{\left\vert k\right\vert ^{2}} \bigg) e^{ik\cdot z}, %
\]
where ${\rm Id}$ is the identity matrix, $k_{0}$ is a positive number and the sum is computed over all
$k\in\mathbb{Z}_0^{d}$ such that $|k| \geq k_{0}$. Kolmogorov 41 scaling is given by the value
$\zeta=4/3$.

The covariance function of the noise $W(t,x)$ defined in \eqref{eq:noise-higherdim}, which
is space-homogeneous, is given by
\begin{align*}
Q_{W}\left( z\right)   &  =\mathbb{E}\left[  W(1,x+z)\otimes W(1,x)\right]
=2\kappa \sum_{k\in \Z_0^d} \sum_{i=1}^{d-1} \theta_{k}^{2}\,\sigma_{k,i}(x+z)\otimes
\sigma_{-k,i}(x)\\
&  =2\kappa \sum_{k\in \Z_0^d} \theta_{k}^{2} \bigg(\sum_{i=1}^{d-1} a_{k,i}\otimes a_{k,i} \bigg)e^{ik\cdot z} .
\end{align*}
Note that
  $${\rm Id} -\frac{k\otimes k}{\left\vert k\right\vert ^{2}} = \sum_{i=1}^{d-1} a_{k,i}\otimes a_{k,i}, \quad k\in \Z^d_0, $$
hence the comparison with Kraichnan noise and
our noise is%
\begin{align*}
2\kappa =\sigma^{2},\quad
\theta_{k}^{2}  =\frac{k_{0}^{\zeta}}{\left\vert k\right\vert ^{d+\zeta}%
} {\bf 1}_{\left\{  \left\vert k\right\vert \geq k_{0}\right\}  }.
\end{align*}
We then have
\[
\Vert\theta\Vert_{\ell^{\infty}} =k_{0}^{-d/2}%
\]
and therefore the condition that $\Vert\theta\Vert_{\ell^{\infty}}$ is
small corresponds to the requirement that $k_{0}$ is large, namely that we
consider a noise acting at small scales. Simultaneously we ask that
$\sigma^{2}$ is large. Therefore we can prove a finite-horizon mixing property
using a small scale large intensity Kraichnan-type noise.
\end{remark}

\subsubsection{Dissipation enhancement}\label{subs-dissip-enhanc}

Similarly to Section \ref{subsec-mixing}, we consider on $\mathbb{T}^{d}$ the
stochastic transport equation but now \textit{with dissipation}%
\begin{equation}\label{heat transport eq}
\d f+\circ\d W\cdot\nabla f=\nu\Delta f\,\d t%
\end{equation}
where the noise $W$ is the same as in the previous section,
parametrized by the pair $(\kappa,\theta)$. The quantitative estimates
developed in this paper allows us, in this particular case, to go beyond a
result of interest over finite time and prove the following result on decay
at infinity (see Section \ref{subs-dissip-enhanc-proof} for its proof).

\begin{theorem}\label{thm-transp-diffus}
For any $p\geq 1$ and $\lambda>0$, there exists a pair $(\kappa,\theta)$ with the following
property: for every $f_{0}\in L^{2}(  \mathbb{T}^{d})$ with zero mean, there
exists a random constant $C>0$ with finite $p$-th moment, such that for the solution $f_{t}$ of equation
\eqref{heat transport eq} with initial condition $f_{0}$, we have $\mathbb{P}%
$-a.s.
\[
\| f_{t} \|_{L^2} \leq C e^{-\lambda t} \|f_0 \|_{L^2}\quad \mbox{for all } t\geq0.
\]
\end{theorem}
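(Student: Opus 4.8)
The plan is to reduce the infinite-horizon estimate to a single \emph{one-step second-moment contraction} over a fixed time window, and then to iterate it using the independence of the noise increments. Two elementary facts drive everything. First, since $W$ is divergence free the Stratonovich transport term does not affect the energy, so pathwise
\[
\frac{\d}{\d t}\|f_t\|_{L^2}^2 = -2\nu\|\nabla f_t\|_{L^2}^2\le 0,
\]
in particular $t\mapsto\|f_t\|_{L^2}$ is nonincreasing $\P$-a.s. Second, taking expectations in the It\^o form $\d f+\d W\cdot\nabla f=(\kappa+\nu)\Delta f\,\d t$ shows that $\bar f_t=\E[f_t]$ solves the heat equation with coefficient $\kappa+\nu$; the spatial mean is conserved by the dynamics, so $f_t$ stays zero-mean (only $|k|\ge1$ occur) and $\|\bar f_t\|_{L^2}\le e^{-8\pi^2(\kappa+\nu)t}\|f_0\|_{L^2}$.

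\textbf{One-step contraction.} I will show that, for a suitable $(\kappa,\theta)$ and a fixed window length $\tau>0$, one has $\E[\|f_\tau\|_{L^2}^2]\le q\,\|f_0\|_{L^2}^2$ for every zero-mean $f_0\in L^2$, with $q$ as small as desired. Writing $E(t)=\E[\|f_t\|_{L^2}^2]$ and $E_{\le N}(t)=\E[\|P_{\le N}f_t\|_{L^2}^2]$ for the Fourier projector onto $\{|k|\le N\}$, the energy identity and $\|\nabla f\|_{L^2}^2\ge 4\pi^2N^2\|P_{>N}f\|_{L^2}^2$ give $\frac{\d}{\d t}E\le-8\pi^2\nu N^2\,(E-E_{\le N})$, so that $E(\tau)$ is small once the low-frequency energy has been depleted. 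To bound $E_{\le N}$ I split $f_t=\bar f_t+(f_t-\bar f_t)$: the mean contributes $e^{-8\pi^2(\kappa+\nu)t}\|f_0\|_{L^2}^2$, while for the fluctuation I claim the mode-wise bound $\E[|\langle f_t-\bar f_t,e_k\rangle|^2]\lesssim_d \|\theta\|_{\ell^\infty}^2\,\|f_0\|_{L^2}^2$, uniformly in $t\ge0$ and $k$. Summing over $|k|\le N$ yields $E_{\le N}(t)\lesssim(e^{-8\pi^2(\kappa+\nu)t}+N^d\|\theta\|_{\ell^\infty}^2)\|f_0\|_{L^2}^2$, and feeding this into the differential inequality (after choosing $\kappa$ large enough that the mean decays faster than the enhanced rate $8\pi^2\nu N^2$) gives $q\lesssim e^{-8\pi^2\nu N^2\tau}+N^d\|\theta\|_{\ell^\infty}^2$. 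Taking first $N$ large, then $\kappa$ large, then $\|\theta\|_{\ell^\infty}$ small makes $q$ arbitrarily small; this is precisely where dissipation is enhanced beyond the bare rate $8\pi^2\nu$, the noise continually feeding energy into high frequencies where $\nu\Delta$ acts strongly.

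The fluctuation bound is the technical heart and the point I would be most careful about. From the mild formulation, $g_t:=f_t-\bar f_t$ satisfies $\langle g_t,e_k\rangle=-\int_0^t\langle\d W_s\cdot\nabla f_s,\,\phi_s\rangle$ with $\phi_s=e^{(t-s)(\kappa+\nu)\Delta}e_k$; It\^o's isometry, the symmetry of $\theta$, and the fact that $\{\sigma_{k,i}\}$ is a CONS of divergence-free fields give, after integrating the derivative onto the test function,
\[
\E\big[|\langle g_t,e_k\rangle|^2\big]\lesssim\kappa\,\|\theta\|_{\ell^\infty}^2\int_0^t\E\big[\|f_s\|_{L^2}^2\big]\,\|\nabla\phi_s\|_{L^\infty}^2\,\d s.
\]
The essential gain is that for $\phi=e_k$ one has $\|\nabla\phi_s\|_{L^\infty}^2=4\pi^2|k|^2e^{-8\pi^2(\kappa+\nu)(t-s)|k|^2}$, whence $\int_0^t\|\nabla\phi_s\|_{L^\infty}^2\,\d s\le (2(\kappa+\nu))^{-1}$: the factor $|k|^2$ produced by the gradient cancels exactly against the heat-kernel time integral. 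This yields the correct $\|f_0\|_{L^2}$ dependence (uniformly in $t$ and $k$) with no spurious $\kappa/\nu$. A cruder estimate would instead leave either $\|f_0\|_{L^\infty}$ (as in Theorem \ref{thm-transport}) or an unusable $\kappa/\nu$, both of which would destroy uniformity in $f_0$; I expect to lean here on the heat-kernel and stochastic-convolution bounds developed earlier in the paper.

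\textbf{Iteration and the random constant.} Since $W$ has stationary independent increments and the equation is linear and autonomous, the solution operator over the $n$-th window is independent of $\mathcal F_{(n-1)\tau}$ and distributed as the one over $[0,\tau]$; applying the one-step bound to the measurable zero-mean datum $f_{(n-1)\tau}$ gives $\E[\|f_{n\tau}\|_{L^2}^2\mid\mathcal F_{(n-1)\tau}]\le q\,\|f_{(n-1)\tau}\|_{L^2}^2$. Combined with the pathwise monotonicity $\|f_{n\tau}\|_{L^2}\le\|f_{(n-1)\tau}\|_{L^2}$, this upgrades to a $p$-th moment contraction $\E[\|f_{n\tau}\|_{L^2}^p]\le\tilde q^{\,n}\|f_0\|_{L^2}^p$ with $\tilde q=\max(q,q^{p/2})<1$. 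Setting $C:=\sup_{t\ge0}e^{\lambda t}\|f_t\|_{L^2}/\|f_0\|_{L^2}$ and using monotonicity within each window to compare $C$ with $e^{\lambda\tau}\sup_n e^{\lambda n\tau}\|f_{n\tau}\|_{L^2}/\|f_0\|_{L^2}$, one gets $\E[C^p]\lesssim_\lambda\sum_n (e^{\lambda p\tau}\tilde q)^n$, which is finite as soon as $q$ was chosen small enough that $e^{\lambda p\tau}\tilde q<1$. This requirement is compatible with the freedom in the one-step estimate, so fixing $\tau$ and then $N,\kappa,\|\theta\|_{\ell^\infty}$ as above produces a pair $(\kappa,\theta)$ for which $\|f_t\|_{L^2}\le Ce^{-\lambda t}\|f_0\|_{L^2}$ holds $\P$-a.s. with $\E[C^p]<\infty$, as claimed.
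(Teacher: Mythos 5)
Your proof is correct, but the key one-step contraction is obtained by a genuinely different mechanism than the paper's. The paper (Lemma \ref{lem-estim-interv}) estimates $\E\|f_{n+1}\|_{L^2}^2$ via the mild formulation and then controls the stochastic convolution $Z_{n,t}$ in $L^2$ by interpolating between an $H^{\alpha}$ bound (which uses $\|\theta\|_{\ell^2}=1$ and the viscous dissipation budget $\nu\int\|\nabla f\|_{L^2}^2\,\d t\le\|f_0\|_{L^2}^2$, hence costs a factor $\nu^{-1}$) and an $H^{-\beta}$ bound with $\beta>d/2$ (which brings in $\|\theta\|_{\ell^\infty}$), yielding $\delta\lesssim\kappa^{-1}+\kappa^{a}\nu^{-b}\|\theta\|_{\ell^\infty}^{c}$. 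You instead work spectrally: the differential inequality $E'\le-8\pi^2\nu N^2(E-E_{\le N})$ converts smallness of the low-frequency energy into decay at the enhanced rate $\nu N^2$, and the low frequencies are handled by the variance decomposition $\E|\langle f_t,e_k\rangle|^2=|\langle\bar f_t,e_k\rangle|^2+\E|\langle f_t-\bar f_t,e_k\rangle|^2$ together with the mode-wise fluctuation bound. That bound is the same computation as in the proof of Theorem \ref{thm-transport}, except that you pair $f_s\nabla P_{t-s}e_k$ as $L^2\times L^\infty$ rather than $L^\infty\times L^2$; the cancellation $|k|^2\int_0^t e^{-8\pi^2(\kappa+\nu)(t-s)|k|^2}\d s\le(8\pi^2(\kappa+\nu))^{-1}$ then gives a constant $\lesssim_d\|\theta\|_{\ell^\infty}^2\|f_0\|_{L^2}^2$ uniform in $t,k,\kappa,\nu$, which is exactly what allows the parameter ordering ($N$, then $\kappa\gtrsim\nu N^2$, then $\|\theta\|_{\ell^\infty}$). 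Your route avoids BDG and negative-Sobolev interpolation of stochastic convolutions, and makes the enhanced-dissipation mechanism (energy pushed to frequencies $>N$ where $\nu\Delta$ bites) explicit; the paper's route is closer to the machinery it reuses for the nonlinear results. Your tail estimate for $C$ via $C^p\le e^{\lambda p\tau}\sum_n e^{\lambda pn\tau}\|f_{n\tau}\|^p/\|f_0\|^p$ is also a slightly cleaner substitute for the paper's Borel--Cantelli argument. Two points to make explicit in a full write-up: the application of the one-step bound to the random datum $f_{(n-1)\tau}$ needs the Markov property (or, as in the paper, one can simply run the estimates directly on the window $[(n-1)\tau,n\tau]$, since they are linear in $\E\|f_{(n-1)\tau}\|_{L^2}^2$); and the $p$-th moment contraction for $p>2$ should use the pathwise bound $\|f_{n\tau}\|_{L^2}^{p-2}\le\|f_0\|_{L^2}^{p-2}$ exactly as you indicate.
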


Results of this form have been obtained in \cite{BedroBlum}; those results
are technically more demanding, in particular because the noise $W$ is not
white in time, and to some extent it is very general and not required to have
parameters (like $(\kappa,\theta)$) large or small, namely close to their
scaling limit.
After the first version of this paper was completed, the preprint \cite{GessYar} appeared, where the authors readapt the techniques from \cite{Dolgo, BedroBlum} to establish long time mixing and enhanced dissipation estimates for solutions to \eqref{heat transport eq}.
The results therein are far reaching and allow for very low dimensional noise, provided it is smooth enough; in comparison, our techniques have the advantage that they allow rougher noise and that they work for both linear and nonlinear PDEs.

\subsection{Our strategy}

\label{subsec-heuristic}

For the reader's convenience, we give here a brief description of our approach
on how to derive quantitative estimates in the style of Theorem \ref{thm-1}.
As the method is quite general, we present it in an abstract but rather simple
setting, in order to highlight the main ideas. The strategy for proving
Theorem \ref{thm-3} follows similar considerations but will not be discussed here.

Consider an SPDE with a nonlinearity $F$ of the form
\begin{equation}
\label{eq:heuristic-eq1}\mathrm{d} \omega+ F(\omega)\,\mathrm{d} t +
\circ\mathrm{d} W\cdot\nabla\omega= \nu\Delta\omega\, \mathrm{d} t
\end{equation}
where $W$ is defined as above for a given pair $(\kappa,\theta)$. As mentioned
above, the structure of the noise is so that the SPDE has equivalent It\^o
formulation
\[
\mathrm{d} \omega+ F(\omega)\,\mathrm{d} t + \mathrm{d} W\cdot\nabla\omega=
(\kappa+\nu) \Delta\omega\, \mathrm{d} t.
\]
Such formulation can be misleading, as the enhanced viscosity $\kappa\Delta$
does not imply any regularizing effect at this stage; the right way to derive
an energy balance for the solution is still to use the Stratonovich
formulation \eqref{eq:heuristic-eq1}. Nevertheless, if suitable a priori
estimates for $\omega$ are available, the stochastic term $\mathrm{d}
M^{\omega}:= \mathrm{d} W\cdot\nabla\omega$ is a well defined martingale
(taking values in a suitable distributional space); the SPDE can therefore be
written as
\begin{equation}
\label{eq:heuristic-eq2}\mathrm{d} \omega+ F(\omega)\,\mathrm{d} t =
(\kappa+\nu)\Delta\omega\, \mathrm{d} t - \mathrm{d} M^{\omega}.
\end{equation}
In particular, equation \eqref{eq:heuristic-eq2} can be regarded as a
stochastic perturbation of the deterministic PDE with enhanced viscosity
\begin{equation}
\label{eq:heuristic-eq3}\partial_{t} \tilde{\omega} + F(\tilde{\omega}) =
(\kappa+\nu)\Delta\tilde{\omega}%
\end{equation}
due to the presence of a stochastic forcing term $\mathrm{d} M^{\omega}$; the
key point is that the very poor space-time regularity of $\mathrm{d}
M^{\omega}$ exactly counters the term $(\kappa+\nu)\Delta$ and does not allow
to derive estimates for $\omega$ in $H^{\alpha}$ for any $\alpha\geq0$. This
is consistent with the fact that the \textit{variational approach} requires to
interpret the SPDE in the Stratonovich form \eqref{eq:heuristic-eq1}, as done
in \cite{FGL2}.

The major intuition of the current work is that, while formulation
\eqref{eq:heuristic-eq2} cannot be used to derive estimates in strong spaces,
it can be employed within the \textit{semigroup approach} to obtain rates of
convergence in the weaker scales $H^{-\alpha}$ for $\alpha>0$.

To explain what we mean, we start by writing equation \eqref{eq:heuristic-eq2}
in the corresponding mild formulation
\[
\omega_{t} = P_{t} \omega_{0} - \int_{0}^{t} P_{t-s} F(\omega_{s})\,
\mathrm{d} s - Z_{t}, \quad Z_{t} := \int_{0}^{t} P_{t-s}\, \mathrm{d}
M^{\omega}_{s}.
\]
Here $P_{t} = e^{t(\kappa+\nu)\Delta}$ for $t\geq0$, while the process $Z$ is
an instance of a \textit{stochastic convolution}. Both the passage from weak
to mild formulation and the definition of $Z$ are classical, but will be
explained in detail in Sections \ref{subsec-stoch-convol} and
\ref{sec:mild-form}.

Consider now a solution $\tilde{\omega}$ to \eqref{eq:heuristic-eq3} with the
same initial data $\omega_{0}$ and write it in mild form (which is the same as
above with $Z\equiv0$). Defining the difference $\xi_{t} = \omega_{t}%
-\tilde{\omega}_{t}$, it holds
\begin{equation}
\label{eq:heuristic-eq4}\xi_{t} = -\int_{0}^{t} P_{t-s} \big[F(\omega_{s})
-F(\tilde\omega_{s})\big]\,\mathrm{d} s - Z_{t}.
\end{equation}
At this stage, the stochastic process $Z$ can be seen as a random element of
$C([0,T];H^{-\alpha})$ and equation \eqref{eq:heuristic-eq4} can be treated by
purely analytic methods in a pathwise manner. Assume that the nonlinearity $F$
satisfies some regularity condition of the form
\begin{equation}
\label{eq:heuristic-eq5}\| F(\omega)-F(\tilde{\omega})\|_{H^{-\alpha-1}}
\lesssim\| \omega-\tilde{\omega}\|_{H^{-\alpha}} \quad\forall\, \omega
,\tilde{\omega}\in L^{2};
\end{equation}
such assumption is not very realistic, especially for polynomial
nonlinearities, and more complicated variants should be considered, but for
the sake of exposition here we stick to \eqref{eq:heuristic-eq5}.

Assumption \eqref{eq:heuristic-eq5}, together with classical estimates for
convolution with heat kernel (which will be recalled in Section
\ref{subsec:technical-lemmas}), imply that the solution $\xi$ to
\eqref{eq:heuristic-eq4} satisfies
\[
\aligned
\| \xi_{t}\|_{H^{-\alpha}}^{2}  &  \lesssim\frac1{\kappa+\nu} \int_{0}^{t} \|
F(\omega_{s})-F(\tilde{\omega}_{s})\|_{H^{-\alpha-1}}^{2}\, \mathrm{d} s +
\|Z_{t}\|_{H^{-\alpha}}^{2} \newline &  \lesssim\frac1{\kappa+\nu} \int%
_{0}^{t} \| \xi_{s}\|_{H^{-\alpha}}^{2}\, \mathrm{d} s + \| Z_{t}%
\|_{H^{-\alpha}}^{2}. \endaligned
\]
Then, an application of Gronwall's lemma yields the pathwise estimate
\begin{equation}
\label{eq:heuristic-eq6}\sup_{t\in[0,T]} \| \xi_{t}\|_{H^{-\alpha}} \lesssim
e^{CT} \sup_{t\in[0,T]} \| Z_{t}\|_{H^{-\alpha}}%
\end{equation}
for some constant $C$ depending on parameters like $\alpha, \kappa, \nu$, etc.
Finally, estimates like those in Theorem \ref{thm-1} will follow from taking
expectation in \eqref{eq:heuristic-eq6}, assuming we have enough integrability
on $Z$ and we can control it in terms of $\| \theta\|_{\ell^{\infty}}$; this
is indeed possible and will be presented in Section \ref{subsec-stoch-convol}.

Overall we see that obtaining convergence rates of the SPDE
\eqref{eq:heuristic-eq1} to the deterministic PDE \eqref{eq:heuristic-eq3}
requires a nice interplay of analytic and probabilistic arguments as follows:

\begin{itemize}
\item Passing from \eqref{eq:heuristic-eq4} to \eqref{eq:heuristic-eq6} in a
pathwise manner requires an assumption on the nonlinearity $F$ similar (but
possibly more complicated) to \eqref{eq:heuristic-eq5}; this step is purely
analytical and requires different treatment depending on the PDE in consideration.

\item Estimating the stochastic convolution $Z$ instead can be done in full
generality and mostly relies on probabilistic arguments; it requires however
some information on $M^{\omega}$ and subsequently the given solution $\omega$.

\item The Stratonovich formulation \eqref{eq:heuristic-eq1}, together with the
divergence free structure of the noise and the variational approach, are the
right tools to derive a priori estimates on $\omega$; this step is a mixture
of analytic and probabilistic techniques and has already been developed for
several PDEs in \cite{Gal, FGL, Luo, LuoSaal, FGL2}.
\end{itemize}

We conclude the introduction with the structure of the paper.
In Section 2 we make some preparations, the most important part being devoted to maximal estimates for stochastic convolutions.
Theorem \ref{thm-1} will be proved in Section \ref{sec:euler}, together with its extensions to 2D Boussinesq and mSQG systems.
Section \ref{sec:blow-up} is dedicated to the estimates on blow-up probability, dealing with the Keller-Segel model. We provide the proofs of Theorems \ref{thm-transport} and \ref{thm-transp-diffus} in the two subsections of Section \ref{sec:transport}; some related results concerning the solution operator of \eqref{stoch-transp-Ito} will also be proved in Section \ref{subs-inviscid-transport}.

\section{Preliminaries}\label{sec:preliminaries}

In this section we provide several tools of fundamental importance for the
next sections: Section \ref{subsec:technical-lemmas} contains some technical
results that will be frequently used below; Section \ref{subsec-stoch-convol}
presents maximal estimates for stochastic convolutions; finally Section
\ref{sec:mild-form} explains the link between weak and mild form for the class
of SPDEs of our interest.

\subsection{Some analytical lemmas}

\label{subsec:technical-lemmas}

We first recall the following well known estimates on the transport term and
products of functions in Sobolev spaces.

\begin{lemma}
\label{lem-estimate} In the following, $V\in L^{2}(\mathbb{T}^{2}%
,\mathbb{R}^{2})$ always denotes a divergence free vector field.

\begin{itemize}
\item[\textrm{(a)}] For $V\in L^{\infty}(\mathbb{T}^{2},\mathbb{R}^{2})$ and
$f\in L^{2}(\mathbb{T}^{2})$, we have
\[
\|V\cdot\nabla f\|_{H^{-1}} \lesssim\|V\|_{L^{\infty}} \|f\|_{L^{2}}.
\]

\item[\textrm{(b)}] Let $\alpha\in(1,2]$, $\beta\in(0,\alpha-1)$, $V\in
H^{\alpha}(\mathbb{T}^{2},\mathbb{R}^{2})$ and $f\in H^{-\beta}(\mathbb{T}%
^{2})$, then
\[
\|V\cdot\nabla f\|_{H^{-1-\beta}} \lesssim_{\alpha,\beta} \|V\|_{H^{\alpha}}
\|f\|_{H^{-\beta}}.
\]

\item[\textrm{(c)}] Let $\beta\in(0,1)$, then for any $f\in H^{\beta
}(\mathbb{T}^{2})$ and $g\in H^{1-\beta}(\mathbb{T}^{2})$ it holds
\[
\|f\, g\|_{L^{2}} \lesssim_{\beta}\|f\|_{H^{\beta}} \|g\|_{H^{1-\beta}}.
\]

\item[\textrm{(d)}] Let $\beta\in(0,1)$, $V\in H^{1-\beta}(\mathbb{T}%
^{2},\mathbb{R}^{2})$ and $f\in L^{2}(\mathbb{T}^{2})$, then one has
\[
\| V\cdot\nabla f\|_{H^{-1-\beta}} \lesssim_{\beta}\| V\|_{H^{1-\beta}} \, \|
f\|_{L^{2}}.
\]
\end{itemize}
\end{lemma}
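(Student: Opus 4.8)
The unifying observation is that, since $V$ is divergence free, one has $V\cdot\nabla f = \div(Vf)$ in the distributional sense, and the operator $\div$ maps $H^{s}$ into $H^{s-1}$ boundedly for every $s\in\R$. Consequently each of the four transport estimates reduces, after a free loss of one derivative, to a product estimate for $Vf$ in a suitable Sobolev space. The plan is therefore to establish the product estimate (c) first, since it is the analytic heart of the lemma, and then to deduce (a), (d) and (b) from it together with the divergence structure and duality.

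For (c) I would rely on the two-dimensional Sobolev embeddings: for $\beta\in(0,1)$ one has $H^{\beta}(\T^{2})\hookrightarrow L^{p_{1}}$ and $H^{1-\beta}(\T^{2})\hookrightarrow L^{p_{2}}$ with $1/p_{1}=(1-\beta)/2$ and $1/p_{2}=\beta/2$. These exponents satisfy $1/p_{1}+1/p_{2}=1/2$, so H\"older's inequality gives $\|fg\|_{L^{2}}\leq\|f\|_{L^{p_{1}}}\|g\|_{L^{p_{2}}}\lesssim_{\beta}\|f\|_{H^{\beta}}\|g\|_{H^{1-\beta}}$, which is exactly (c). (A fractional Leibniz estimate would give the same bound, but the embedding argument is the most economical.)

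The remaining parts follow quickly. For (a), since $V\in L^{\infty}$ and $f\in L^{2}$ one has $\|Vf\|_{L^{2}}\leq\|V\|_{L^{\infty}}\|f\|_{L^{2}}$, and combining this with $\|\div(Vf)\|_{H^{-1}}\lesssim\|Vf\|_{L^{2}}$ yields the claim; equivalently one tests against $\phi\in H^{1}$ and integrates by parts. For (d), I would argue by duality: the divergence reduction gives $\|V\cdot\nabla f\|_{H^{-1-\beta}}=\|\div(Vf)\|_{H^{-1-\beta}}\lesssim\|Vf\|_{H^{-\beta}}$, and for $\|\phi\|_{H^{\beta}}\leq1$ one estimates $\langle Vf,\phi\rangle=\langle f,V\phi\rangle\leq\|f\|_{L^{2}}\|V\phi\|_{L^{2}}$, where $\|V\phi\|_{L^{2}}\lesssim_{\beta}\|V\|_{H^{1-\beta}}\|\phi\|_{H^{\beta}}$ is precisely estimate (c) applied with $\phi\in H^{\beta}$ and $V\in H^{1-\beta}$. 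Taking the supremum over such $\phi$ gives (d).

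For (b) the same divergence reduction leaves us with the product bound $\|Vf\|_{H^{-\beta}}\lesssim_{\alpha,\beta}\|V\|_{H^{\alpha}}\|f\|_{H^{-\beta}}$, which by duality is equivalent to the multiplier estimate $\|V\phi\|_{H^{\beta}}\lesssim_{\alpha,\beta}\|V\|_{H^{\alpha}}\|\phi\|_{H^{\beta}}$. Since $\alpha>1=d/2$, the space $H^{\alpha}(\T^{2})$ acts as a multiplier algebra on $H^{s}$ for $|s|\leq\alpha$, and the hypothesis $\beta\in(0,\alpha-1)$ keeps the relevant indices within this range. I would prove the multiplier bound through a paraproduct decomposition $V\phi=T_{V}\phi+T_{\phi}V+R(V,\phi)$, using the embedding $H^{\alpha}\hookrightarrow L^{\infty}$ for the low-high piece and the strict positivity of $\alpha-1-\beta$ to sum the remaining two pieces; alternatively a standard Kato--Ponce estimate applies directly. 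This fractional multiplication step is the only genuinely non-elementary ingredient and is the main technical obstacle, whereas (a), (c) and (d) are immediate once the divergence structure and the Sobolev embeddings are in place.
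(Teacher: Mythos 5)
Your proposal is correct and follows essentially the same route as the paper: the divergence-free reduction $V\cdot\nabla f=\nabla\cdot(Vf)$ with the one-derivative loss, H\"older plus the embeddings $H^{\beta}\hookrightarrow L^{2/(1-\beta)}$, $H^{1-\beta}\hookrightarrow L^{2/\beta}$ for (c), and the duality pairing $\langle Vf,\phi\rangle=\langle f,V\phi\rangle$ combined with (c) for (d). The only (cosmetic) divergence is in (b), where the paper uses the embedding $H^{\alpha}\hookrightarrow C^{s}$ for $s\in(\beta,\alpha-1)$ together with the $C^{s}\times H^{-\beta}$ paraproduct bound, while you dualize to the Sobolev multiplier estimate $\|V\phi\|_{H^{\beta}}\lesssim\|V\|_{H^{\alpha}}\|\phi\|_{H^{\beta}}$; both rest on the same paraproduct calculus and are equally valid under the hypothesis $\beta<\alpha-1$.
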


\begin{proof} The proofs are classical, but we provide them for completeness. First observe that, by the divergence free assumption, $\| V\cdot\nabla f\|_{H^{s-1}} = \| \nabla\cdot (Vf)\|_{H^{s-1}} \lesssim \| V f\|_{H^s}$ for any $s\in\R$. Point (a) then immediately follows from $\| Vf\|_{L^2} \leq \| V\|_{L^\infty} \| f\|_{L^2}$.

(b) By the assumptions and Sobolev embedding, $V\in C^{s}(\T^2,\R^2)$ for any $s\in (\beta,\alpha-1)$; by classical results regarding paraproducts (see for instance \cite{BCD}), the product between $f\in H^{-\beta}$ and $V\in C^s$ with $s>\beta$ is a well defined element of $H^{-\beta}$ and $\| fV\|_{H^{-\beta}} \lesssim \| V\|_{C^s} \| f\|_{H^{-\beta}}$.

(c) The assertion follows from H\"older's inequality combined with the Sobolev embeddings $\|f\|_{L^{2/(1-\beta)}} \lesssim \|f\|_{H^{\beta}} $, $\|g\|_{L^{2/\beta}} \lesssim \|f\|_{H^{1-\beta}} $.

(d) By point (c), for any $\varphi\in C^\infty(\T^2,\R^2)$ we have
$$\aligned
|\<V\, f, \varphi\>| &= |\<f, V\cdot \varphi\>| \leq \|f\|_{L^2} \|V\cdot \varphi\|_{L^2}
\lesssim \|f\|_{L^2} \|V\|_{H^{1-\beta}} \| \varphi\|_{H^\beta}
\endaligned $$
showing that $f V\in H^{-\beta}(\T^2,\R^2)$; the desired estimate follows from the initial observation.
\end{proof}

Next we state some classical heat kernel estimates for later use.

\begin{lemma}\label{lem:heat-kernel-estim}
Let $u\in H^{\alpha}$, $\alpha\in\mathbb{R}$. Then:

\begin{itemize}
\item[\textrm{(i)}] for any $\rho\geq0$, it holds $\|e^{t\Delta} u
\|_{H^{\alpha+\rho}} \leq C_{\rho}\, t^{-\rho/2} \|u\|_{H^{\alpha}}$ for some
constant increasing in $\rho$;

\item[\textrm{(ii)}] for any $\rho\in[0,2]$, it holds $\| (I-e^{t\Delta})
u\|_{H^{\alpha-\rho}}\lesssim t^{\rho/2} \|u\|_{H^{\alpha}}$.
\end{itemize}
\end{lemma}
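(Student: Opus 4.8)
The natural plan is to pass to Fourier coordinates, where both operators act diagonally, and to reduce each inequality to a uniform pointwise bound on a scalar multiplier. Recall that $\Delta e_{k}=-4\pi^{2}|k|^{2}e_{k}$, so that $e^{t\Delta}$ multiplies the $k$-th Fourier coefficient by $e^{-4\pi^{2}|k|^{2}t}$, while by Parseval $\|u\|_{H^{s}}^{2}\sim\sum_{k\in\Z^{d}}(1+|k|^{2})^{s}|\hat u_{k}|^{2}$. Given this, each of the two estimates follows once I control the ratio of the relevant Fourier symbols, summand by summand.

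For (i), expanding both Sobolev norms in Fourier variables, it suffices to show
\[
\sup_{k\in\Z^{d}}(1+|k|^{2})^{\rho}\,e^{-8\pi^{2}|k|^{2}t}\leq C_{\rho}\,t^{-\rho},
\]
after which the bound on $\|e^{t\Delta}u\|_{H^{\alpha+\rho}}^{2}$ is immediate term by term. I would deduce this from the elementary identity $\sup_{x\geq0}x^{\rho}e^{-x}=(\rho/e)^{\rho}$: the substitution $x=8\pi^{2}|k|^{2}t$ produces exactly the factor $t^{-\rho}$ together with an explicit $\rho$-dependent constant, which can be replaced by a monotone majorant so as to be increasing in $\rho$.

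For (ii), the analogous reduction requires
\[
\sup_{k\in\Z^{d}}(1+|k|^{2})^{-\rho}\big(1-e^{-4\pi^{2}|k|^{2}t}\big)^{2}\lesssim t^{\rho}.
\]
Here I would use the one-variable inequality $1-e^{-x}\leq x^{s}$, valid for every $x\geq0$ and $s\in[0,1]$ (checked separately on $x\leq1$, where $x^{s}\geq x\geq1-e^{-x}$, and on $x\geq1$, where $x^{s}\geq1\geq1-e^{-x}$). Applying it with $s=\rho/2\in[0,1]$ and $x=4\pi^{2}|k|^{2}t$ gives $1-e^{-4\pi^{2}|k|^{2}t}\lesssim(1+|k|^{2})^{\rho/2}t^{\rho/2}$; squaring, multiplying by $(1+|k|^{2})^{-\rho}$ and summing yields the claim for all $t>0$ and $\rho\in[0,2]$, with no low-frequency difficulty (the $k=0$ mode contributes nothing since $1-e^{0}=0$).

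The one genuinely delicate point sits in (i): the displayed multiplier bound degrades at the low modes for large $t$ — indeed for a constant function the left-hand side of (i) never decays, so the estimate as literally stated cannot hold uniformly in $t$ for arbitrary $u$. The clean fix, and the only case needed in this paper, is that the relevant fields are mean-zero, so the Fourier sum runs over $|k|\geq1$; there one bounds $(1+|k|^{2})\leq2|k|^{2}$ and reduces to $\sup_{y\geq1}y^{\rho}e^{-8\pi^{2}yt}\leq\sup_{y\geq0}y^{\rho}e^{-8\pi^{2}yt}=C_{\rho}t^{-\rho}$, restoring the bound for every $t>0$ (equivalently, one restricts to a bounded time window). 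Apart from this observation the argument is entirely routine.
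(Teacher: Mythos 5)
The paper states this lemma without proof (it is presented as a collection of classical heat-kernel estimates), and your Fourier-multiplier argument is the standard one and is correct; it is also exactly in the spirit of how the paper proves the adjacent Lemma \ref{lem:heat-kernel}. Your caveat about the zero mode in (i) is well spotted and genuine: as literally written with inhomogeneous weights the bound fails for constants as $t\to\infty$, but the paper implicitly uses the homogeneous weights $|k|^{2s}$ (visible in the displayed computation for Lemma \ref{lem:heat-kernel}) and only ever applies (i) to mean-zero quantities such as $\nabla\cdot(\sigma_{k,i}\,\omega_r)$ on a bounded time horizon, so your fix matches the intended usage.
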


We also present the following regularizing effect by convolution with
$e^{\delta t \Delta}$ for some $\delta>0$.

\begin{lemma}
\label{lem:heat-kernel} For any $\alpha\in\mathbb{R}$ and any $f\in
L^{2}(0,T;H^{\alpha})$, it holds
\[
\bigg\|\int_{0}^{t} e^{\delta(t-s) \Delta} f_{s}\, \mathrm{d} s
\bigg\|_{H^{\alpha+1}}^{2} \lesssim\frac1{\delta} \int_{0}^{t} \|
f_{s}\|_{H^{\alpha}}^{2}\, \mathrm{d} s \quad\forall\, t\in[0,T].
\]

\end{lemma}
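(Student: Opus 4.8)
The plan is to diagonalise the heat semigroup in the Fourier basis and thereby reduce the estimate to a one-dimensional computation for each frequency. Writing $f_s = \sum_k \hat f_s(k)\, e_k$ with $\hat f_s(k) = \langle f_s, e_k\rangle$, and recalling that $\Delta e_k = -\lambda_k e_k$ with $\lambda_k = 4\pi^2|k|^2$, the convolution acts diagonally, namely
\[
\int_0^t e^{\delta(t-s)\Delta} f_s\,\mathrm{d}s = \sum_k \Big( \int_0^t e^{-\delta(t-s)\lambda_k}\, \hat f_s(k)\,\mathrm{d}s\Big) e_k ,
\]
so that, by Parseval,
\[
\Big\| \int_0^t e^{\delta(t-s)\Delta} f_s\,\mathrm{d}s \Big\|_{H^{\alpha+1}}^2 = \sum_k (1+|k|^2)^{\alpha+1} \Big| \int_0^t e^{-\delta(t-s)\lambda_k}\, \hat f_s(k)\,\mathrm{d}s\Big|^2 .
\]

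The core step is a pointwise-in-$k$ bound. I would split the kernel as $e^{-\delta(t-s)\lambda_k} = e^{-\delta(t-s)\lambda_k/2}\cdot e^{-\delta(t-s)\lambda_k/2}$ and apply Cauchy--Schwarz in the time variable, obtaining
\[
\Big| \int_0^t e^{-\delta(t-s)\lambda_k}\, \hat f_s(k)\,\mathrm{d}s\Big|^2 \leq \Big( \int_0^t e^{-\delta(t-s)\lambda_k}\,\mathrm{d}s\Big)\Big( \int_0^t e^{-\delta(t-s)\lambda_k}\, |\hat f_s(k)|^2\,\mathrm{d}s\Big).
\]
For $k\neq 0$ the first factor equals $\frac{1-e^{-\delta t\lambda_k}}{\delta\lambda_k}\leq \frac{1}{\delta\lambda_k}$, and this is exactly where both the factor $1/\delta$ and the gain of one spatial derivative come from, since $\frac{(1+|k|^2)^{\alpha+1}}{\lambda_k}\lesssim (1+|k|^2)^{\alpha}$ uniformly over $k\neq 0$ (because $\frac{1+|k|^2}{|k|^2}\leq 2$ for $|k|\geq 1$).

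Combining the two displays, bounding the surviving factor $e^{-\delta(t-s)\lambda_k}\leq 1$, and exchanging the nonnegative sum with the integral by Tonelli, I arrive at
\[
\Big\| \int_0^t e^{\delta(t-s)\Delta} f_s\,\mathrm{d}s \Big\|_{H^{\alpha+1}}^2 \lesssim \frac1\delta \int_0^t \sum_{k\neq 0} (1+|k|^2)^{\alpha} |\hat f_s(k)|^2\,\mathrm{d}s \leq \frac1\delta \int_0^t \|f_s\|_{H^\alpha}^2\,\mathrm{d}s ,
\]
which is the claim. The single genuine obstacle is the zero mode $k=0$, on which the semigroup is the identity and provides no decay: its contribution is $\big|\int_0^t \hat f_s(0)\,\mathrm{d}s\big|^2$, controlled by $t\int_0^t |\hat f_s(0)|^2\,\mathrm{d}s$ rather than by $\frac1\delta\int_0^t|\hat f_s(0)|^2\,\mathrm{d}s$, so the stated constant fails on that mode when $\delta T>1$. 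In every application the relevant integrand has zero spatial mean (e.g. $f$ is a divergence, as for the stochastic convolution driven by $\mathrm{d}M^\omega=\mathrm{d}W\cdot\nabla\omega$), whence $\hat f_s(0)\equiv 0$ and the offending term is simply absent; I would therefore read the estimate on the nonzero modes (equivalently, state it for mean-zero $f$), under which the argument above yields the bound with a universal constant.
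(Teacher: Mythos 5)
Your proof is correct and essentially identical to the paper's: both diagonalise in Fourier, apply Cauchy--Schwarz in time to the kernel, and integrate the exponential to extract the factor $(\delta|k|^2)^{-1}$, which converts one power of $|k|^2$ into $1/\delta$. The only difference is cosmetic: the paper writes the $H^{\alpha+1}$-norm with homogeneous weights $|k|^{2(\alpha+1)}$ (summing over $k\in\mathbb{Z}^d_0$, consistent with its mean-zero applications), so the zero-mode caveat you correctly identify for the inhomogeneous weight $(1+|k|^2)^{\alpha+1}$ simply never arises there.
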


\begin{proof}
For any fixed $t\in [0,T]$, it holds
\begin{equation*}\begin{split}
\bigg\|\int_0^t e^{\delta(t-s) \Delta} f_s\, \d s \bigg\|_{H^{\alpha+1}}^2
& = \sum_k |k|^{2(\alpha+1)} \bigg| \int_0^t e^{-4\pi^2 \delta(t-s)|k|^2} \langle f_s,e_k\rangle\, \d s\bigg|^2\\
& \leq \sum_k |k|^{2(\alpha+1)} \int_0^t e^{-8\pi^2 \delta(t-s)|k|^2}\,\d s \, \int_0^t |\langle f_s,e_k\rangle|^2\, \d s\\
& \lesssim \frac1{\delta} \sum_k |k|^{2\alpha} \int_0^t |\langle f_s,e_k\rangle|^2\, \d s
\end{split}\end{equation*}
which gives the conclusion.
\end{proof}

\subsection{Maximal estimates on stochastic convolution}

\label{subsec-stoch-convol}

We present here some estimates for the process $Z$ which was shortly
introduced in Section \ref{subsec-heuristic}. Maximal estimates for stochastic
convolutions are not a new topic, see \cite{Kot, Tub, DPZ92b} for some
classical results; however we have not found in the literature a result
fitting our framework, which is why we provide it here.

For future use, we assume we are in $\mathbb{T}^{d}$ with $d\geq2$ and
consider the noise $W$ defined in \eqref{eq:noise-higherdim}.
As the definition of $Z$ is independent of the specific SPDE in consideration,
we pose ourselves in a slightly more general framework. Throughout this
section we will assume $\omega$ is just a given $L^{2}$-valued stochastic
process with measurable trajectories satisfying the following

\begin{assumption}
\label{ass:bddness} There exists a deterministic constant $R>0$ such that
\[
\sup_{t\geq0} \| \omega_{t}\|_{L^{2}} \leq R\quad\mathbb{P}\mbox{-a.s.}
\]

\end{assumption}

Given an Hilbert space $E$, we will denote by $[M]_{E}$ the cross-quadratic
variation of an $E$-valued martingale $M$, namely the unique increasing
process such that $\| M\|_{E}^{2} - [M]_{E}$ is a real-valued martingale (so
the definition depends on the choice of $\| \cdot\|_{E}$). We mention that
Burkholder-Davis-Gundy's inequality still holds on Hilbert spaces (and in the
more general class of UMD Banach spaces, cf. \cite{NVM07}).

For $W$ as in \eqref{eq:noise-higherdim} and a stochastic process $\omega$ satisfying Assumption
\ref{ass:bddness}, we define
\[
M_{t} = \int_{0}^{t} \nabla\omega_{s}\cdot\mathrm{d} W_{s} = \sqrt{C_{d}
\kappa} \int_{0}^{t} \sum_{k,i} \theta_{k} \sigma_{k,i}\cdot\nabla\omega_{s}\,
\mathrm{d} W^{k,i}_{s}
\]
where we simply write $\sum_{k,i}$ in place of $\sum_{k\in\mathbb{Z}^{d}_{0}}\sum_{i=1}^{d-1}$.
$M$ is a well-defined continuous martingale with values in $H^{-1}$; indeed,
\begin{align*}
\mathbb{E}\bigg[\sup_{t\in[0,T]} \Big\| \int_{0}^{t} \mathrm{d} W_{s}%
\cdot\nabla\omega_{s}\Big\|_{H^{-1}}^{2}\bigg]  &  \lesssim\kappa\,
\mathbb{E}\bigg[ \sum_{k,i} \int_{0}^{T} \theta_{k}^{2} \| \sigma_{k,i}
\cdot\nabla\omega_{s}\|_{H^{-1}}^{2}\, \mathrm{d} s\bigg]\\
&  \lesssim\kappa\sum_{k,i} \int_{0}^{T} \theta_{k}^{2}\, \mathbb{E}\big[ \|
a_{k,i} e_{k}\|_{L^{\infty}}^{2} \| \omega_{s}\|_{L^{2}}^{2}\big]\, \mathrm{d}
s\\
&  \lesssim\kappa\, \|\theta\|_{\ell^{2}}^{2}\, R^{2}\, T <\infty,
\end{align*}
where we used the property $\big[W^{k,i}, W^{l,j} \big]_{t}= 2t \delta
_{k,-l}\delta_{i,j}$, Lemma \ref{lem-estimate}(a) (holds also in high
dimensions) and Assumption \ref{ass:bddness}.

Given $\delta>0$, our aim is to study the stochastic convolution process
$\{Z_{t}\}_{t\in[0,T]}$ given by
\begin{equation}
\label{eq:stoch-conv}Z_{t} = \int_{0}^{t} e^{\delta(t-s)\Delta}\, \mathrm{d}
M_{s} = \sqrt{C_{d} \kappa} \int_{0}^{t} \sum_{k,i} \theta_{k} e^{\delta
(t-s)\Delta} (\sigma_{k,i}\cdot\nabla\omega_{s})\, \mathrm{d} W^{k,i}_{s}.
\end{equation}

\begin{lemma}
\label{lem:estim-Z} Let $\kappa,\delta>0$, $\theta\in\ell^{2}$ as above,
$\omega$ satisfying Assumption \ref{ass:bddness} and define $Z$ as in
\eqref{eq:stoch-conv}. Then for any $\varepsilon\in(0,1/2)$ and any
$p\in[1,\infty)$ it holds
\begin{equation}\label{eq:estim-Z-ell2}
\mathbb{E} \bigg[ \sup_{t\in[0,T]} \| Z_{t}%
\|_{H^{-\varepsilon}}^{p} \bigg]^{1/p} \lesssim_{\varepsilon,p,T} \sqrt
{\kappa\delta^{\varepsilon-1}}\, \|\theta\|_{\ell^{2}} R;
\end{equation}
similarly,
\begin{equation}\label{eq:estim-Z-ellinfty}
\mathbb{E} \bigg[ \sup_{t\in[0,T]} \|
Z_{t}\|_{H^{-d/2-\varepsilon}}^{p} \bigg]^{1/p} \lesssim_{\varepsilon,p,T}
\sqrt{\kappa\delta^{\varepsilon-1}} \, \|\theta\|_{\ell^{\infty}} R.
\end{equation}
\end{lemma}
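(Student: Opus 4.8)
The plan is to prove the maximal estimates via a factorization argument combined with Burkholder--Davis--Gundy on the Hilbert space $H^{-\varepsilon}$ (resp. $H^{-d/2-\varepsilon}$). The key difficulty is that a direct BDG estimate on $\sup_{t}\|Z_t\|$ does not interact well with the time-convolution structure of $Z$, so first I would employ the \emph{stochastic Fubini / factorization method}: pick an auxiliary exponent $\gamma\in(0,1)$ with $\gamma$ slightly larger than the ``loss'' $\varepsilon$ we can afford, write $e^{\delta(t-s)\Delta}$ using the identity $\int_{s}^{t}(t-r)^{\gamma-1}(r-s)^{-\gamma}\,\mathrm{d}r = \pi/\sin(\pi\gamma)$, and set $Y_r := \int_0^r (r-s)^{-\gamma} e^{\delta(r-s)\Delta}\,\mathrm{d}M_s$. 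Then $Z_t = c_\gamma \int_0^t (t-r)^{\gamma-1} e^{\delta(t-r)\Delta} Y_r\,\mathrm{d}r$, and a deterministic estimate (Young/H\"older in time together with Lemma~\ref{lem:heat-kernel-estim}(i) for the smoothing of $e^{\delta(t-r)\Delta}$) bounds $\sup_{t}\|Z_t\|_{H^{-\varepsilon}}$ by a constant times $\|Y\|_{L^q(0,T;H^{-\varepsilon})}$ for a suitable $q>2$; this transfers the $\sup_t$ to a time-integral of $Y$, which is far easier to control.

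The second step is to estimate $\mathbb{E}\big[\|Y_r\|_{H^{-\varepsilon}}^p\big]$ pointwise in $r$. Since $Y_r$ is an It\^o stochastic integral, BDG on $H^{-\varepsilon}$ gives
\[
\mathbb{E}\big[\|Y_r\|_{H^{-\varepsilon}}^p\big]^{2/p}
\lesssim_p \mathbb{E}\bigg[\Big(\int_0^r (r-s)^{-2\gamma}\,\big[M\big]^{\delta,\varepsilon}_{s,r}\,\mathrm{d}s\Big)^{p/2}\bigg]^{2/p},
\]
where the integrand is the quadratic-variation density of $(r-s)^{-\gamma}e^{\delta(r-s)\Delta}\,\mathrm{d}M_s$ measured in $H^{-\varepsilon}$. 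Expanding $\mathrm{d}M_s$ using \eqref{eq:stoch-conv} and the bracket $[W^{k,i},W^{l,j}]_t = 2t\,\delta_{k,-l}\delta_{i,j}$, the density becomes $2\kappa C_d \sum_{k,i}\theta_k^2\,\|e^{\delta(r-s)\Delta}(\sigma_{k,i}\cdot\nabla\omega_s)\|_{H^{-\varepsilon}}^2$. The crucial smoothing estimate is
\[
\big\|e^{\delta(r-s)\Delta}(\sigma_{k,i}\cdot\nabla\omega_s)\big\|_{H^{-\varepsilon}}
\lesssim (\delta(r-s))^{-(1-\varepsilon)/2}\,\|\sigma_{k,i}\cdot\nabla\omega_s\|_{H^{-1}}
\lesssim (\delta(r-s))^{-(1-\varepsilon)/2}\,\|\omega_s\|_{L^2},
\]
by Lemma~\ref{lem:heat-kernel-estim}(i) (with $\rho=1-\varepsilon$) and Lemma~\ref{lem-estimate}(a) (with $\|a_{k,i}e_k\|_{L^\infty}=1$). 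Inserting this and Assumption~\ref{ass:bddness} yields a deterministic bound on the quadratic variation density of order $\kappa\,\|\theta\|_{\ell^2}^2\,R^2\,\delta^{\varepsilon-1}(r-s)^{\varepsilon-1}$, so the $s$-integral is $\int_0^r (r-s)^{-2\gamma}(r-s)^{\varepsilon-1}\,\mathrm{d}s$, which is finite and $r$-bounded provided $2\gamma+(1-\varepsilon)<1$, i.e.\ $\gamma<\varepsilon/2$; this is exactly where the restriction $\varepsilon\in(0,1/2)$ and the choice of $\gamma$ enter.

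Combining the two steps, taking $p$-th moments and using that the BDG bound on $Y_r$ is uniform in $r$, gives $\mathbb{E}[\sup_t\|Z_t\|_{H^{-\varepsilon}}^p]^{1/p}\lesssim_{\varepsilon,p,T}\sqrt{\kappa\delta^{\varepsilon-1}}\,\|\theta\|_{\ell^2}R$, which is \eqref{eq:estim-Z-ell2}. For the second estimate \eqref{eq:estim-Z-ellinfty} I would repeat the argument measuring everything in $H^{-d/2-\varepsilon}$: the gain is that $\sum_{k}\|e_k\|_{H^{-d/2-\varepsilon}}^2$-type summation is now controllable by $\|\theta\|_{\ell^\infty}^2$ rather than $\|\theta\|_{\ell^2}^2$, since the extra $d/2+\varepsilon$ derivatives supply a factor $|k|^{-d-2\varepsilon}$ whose sum over $\mathbb{Z}^d_0$ converges and can absorb one power of $\theta_k$ into $\|\theta\|_{\ell^\infty}$ while the remaining $\sum_k\theta_k^2=\|\theta\|_{\ell^2}^2=1$. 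The main obstacle is bookkeeping the two competing requirements on $\gamma$ (large enough that $q>2$ so BDG is usable, small enough that $\gamma<\varepsilon/2$ for the time-singularity to be integrable), and verifying that for the $\ell^\infty$-version the frequency weights $|k|^{-(d+2\varepsilon)}$ indeed interact correctly with the frequency-localized norm of $\sigma_{k,i}\cdot\nabla\omega_s$; I expect the heat-kernel smoothing in Lemma~\ref{lem:heat-kernel-estim}(i) and the divergence-free product estimate in Lemma~\ref{lem-estimate}(a) to handle both cleanly.
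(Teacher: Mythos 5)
Your proposal is correct in substance, but it reaches the supremum bound by a genuinely different mechanism than the paper. The single-time ingredients are identical: Burkholder--Davis--Gundy on the Hilbert space $H^{-\varepsilon}$ (resp.\ $H^{-d/2-\varepsilon}$), the heat-kernel smoothing of Lemma \ref{lem:heat-kernel-estim}(i) with $\rho=1-\varepsilon$ producing the factor $\delta^{\varepsilon-1}(t-s)^{\varepsilon-1}$, the divergence-free bound $\|\sigma_{k,i}\cdot\nabla\omega_s\|_{H^{-1}}\lesssim\|\omega_s\|_{L^2}\leq R$, and, for \eqref{eq:estim-Z-ellinfty}, the convergence of the frequency sum supplied by the extra $d/2+\varepsilon$ of negative regularity. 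The divergence is in how $\sup_{t}$ is handled: you use the Da Prato--Zabczyk factorization $Z_t=c_\gamma\int_0^t(t-r)^{\gamma-1}e^{\delta(t-r)\Delta}Y_r\,\mathrm{d}r$ with $\gamma<\varepsilon/2$ and a deterministic H\"older bound transferring $\sup_t\|Z_t\|$ to $\|Y\|_{L^q(0,T;H^{-\varepsilon})}$, whereas the paper estimates the increments via the cocycle identity $Z_t=e^{\delta(t-s)\Delta}Z_s+\int_s^te^{\delta(t-r)\Delta}\,\mathrm{d}M_r$ together with Lemma \ref{lem:heat-kernel-estim}(ii), obtaining $\mathbb{E}\big[\|Z_t-Z_s\|_{H^{-2\varepsilon}}^{2p}\big]\lesssim|t-s|^{p\varepsilon/2}$ and then invoking Kolmogorov's continuity criterion (renaming $2\varepsilon$ as $\varepsilon$ at the end). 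Both routes require a large exponent of order $2/\varepsilon$ (your $q>1/\gamma>2/\varepsilon$; the paper's $p>2/\varepsilon$), with lower moments recovered by Jensen. Your factorization avoids the regularity degradation from $H^{-\varepsilon}$ to $H^{-2\varepsilon}$ and the Kolmogorov machinery, at the price of justifying a stochastic Fubini; the paper's route yields time-H\"older continuity of $Z$ as a by-product and stays entirely within elementary increment estimates.

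One point of your sketch to tighten: in the $\ell^\infty$ estimate you speak of absorbing ``one power of $\theta_k$'' into $\|\theta\|_{\ell^\infty}$ with the ``remaining $\sum_k\theta_k^2=1$''. As written this would produce the wrong power of $\|\theta\|_{\ell^\infty}$. The correct bookkeeping, as in the paper, is to bound $\theta_k^2\leq\|\theta\|_{\ell^\infty}^2$ outright and then use
\[
\sum_k\|e_k\,\omega_r\|_{H^{-d/2-\varepsilon}}^2\lesssim\sum_l|l|^{-d-2\varepsilon}\sum_k|\langle\omega_r,e_{l-k}\rangle|^2\lesssim\varepsilon^{-1}\|\omega_r\|_{L^2}^2,
\]
i.e.\ the convergence of $\sum_l|l|^{-d-2\varepsilon}$ combined with Parseval in $k$ is what makes the unweighted frequency sum finite; the $\ell^2$-normalization of $\theta$ plays no role at this step.
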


\begin{proof}
For fixed $\eps\in (0,1/2]$ and $t\in [0,T]$, we can estimate $\| Z_t\|_{H^{-\eps}}$ by Burkholder-Davis-Gundy's inequality:
\begin{align*}
\E\big[ \| Z_t\|_{H^{-\eps}}^{2p} \big]^{1/2p}
& \sim \sqrt{\kappa}\, \E \bigg[ \Big\| \sum_{k,i} \theta_k \int_0^t e^{\delta(t-r)\Delta} ( \sigma_{k,i}\cdot\nabla \omega_r)\, \d W^{k,i}_r\Big\|^{2p}_{H^{-\eps}} \bigg]^{1/2p}\\
& \lesssim_p \sqrt{\kappa}\, \E \bigg[ \Big[ \sum_{k,i} \theta_k \int_0^\cdot e^{\delta(t-r)\Delta} ( \sigma_{k,i}\cdot\nabla \omega_r)\, \d W^{k,i}_r \Big]^p_{t;H^{-\eps}} \bigg]^{1/2p}\\
& \lesssim \sqrt{\kappa}\, \E \bigg[ \bigg( \sum_{k,i} \theta_k^2 \int_0^t \big\| e^{\delta(t-r)\Delta} ( \sigma_{k,i}\cdot\nabla \omega_r) \big\|_{H^{-\eps}}^2\, \d r\bigg)^p\bigg]^{1/2p}.
\end{align*}
Next, we apply Lemma \ref{lem:heat-kernel-estim}(i) with $\rho= 1-\eps$ (since $\eps>0$ it holds $C_\rho= C_{1-\eps}\leq C_1$) and obtain
\begin{align*}
\E\big[ \| Z_t\|_{H^{-\eps}}^{2p} \big]^{1/2p}
& \lesssim \sqrt{\kappa \delta^{\eps -1}}\, \E \bigg[ \bigg( \sum_{k,i} \theta_k^2 \int_0^t |t-r|^{\eps-1} \| \sigma_{k,i}\cdot\nabla \omega_r \|_{H^{-1}}^2\, \d r\bigg)^p\bigg]^{1/2p}\\
& \lesssim \sqrt{\kappa \delta^{\eps -1}}\,\|\theta\|_{\ell^2}\, \E \Big[\,\| \omega\|_{L^\infty(0,T; L^2)}^{2p} \Big]^{1/2p} \bigg( \int_0^t |t-r|^{\eps-1}\, \d r\bigg)^{1/2}\\
& \lesssim_{T} \sqrt{\kappa \delta^{\eps -1} \eps^{-1}}\, \| \theta\|_{\ell^2} R,
\end{align*}
where the last two steps follow from Lemma \ref{lem-estimate}(a) with $\|\sigma_{k,i}\|_{L^\infty}=1$ and Assumption \ref{ass:bddness}. A similar computation shows that
\begin{align*}
\sqrt{\kappa}\, \E \bigg[ \Big\| \int_s^t e^{\delta(t-r)\Delta}\, \d M_r\Big\|^{2p}_{H^{-\eps}} \bigg]^{1/2p} \lesssim_{p} \sqrt{\kappa \delta^{\eps -1} \eps^{-1}}\, |t-s|^{\eps/2} \| \theta\|_{\ell^2} R.
\end{align*}
Next, observing that by construction $Z$ satisfies the relation
\begin{equation*}
Z_t = e^{\delta(t-s)\Delta} Z_s + \int_s^t e^{\delta(t-r)\Delta}\, \d M_r,
\end{equation*}
by Lemma \ref{lem:heat-kernel-estim}(ii) we obtain
\begin{align*}
\|Z_t-Z_s\|_{H^{-2\eps}}
& \leq \| (I-e^{\delta(t-s)\Delta}) Z_s\|_{H^{-2\eps}} + \Big\| \int_s^t e^{\delta(t-r)\Delta}\, \d M_r\Big\|_{H^{-2\eps}}\\
& \lesssim \delta^{\eps/2} |t-s|^{\eps/2} \| Z_s\|_{H^{-\eps}} + \Big\| \int_s^t e^{\delta(t-r)\Delta}\, \d M_r\Big\|_{H^{-2\eps}}.
\end{align*}
Taking expectation and applying the previous estimates we arrive at
\begin{align*}
\E\big[ \| Z_t-Z_s\|_{H^{-2\eps}}^{2p}\big]^{1/2p}
& \lesssim_{p,T} \sqrt{\kappa \delta^{2\eps -1} \eps^{-1}}\, \|\theta\|_{\ell^2} R \big(|t-s|^{\eps/2} + |t-s|^\eps\big)\\
& \lesssim_{p,T} \sqrt{\kappa \delta^{2\eps -1} \eps^{-1}}\, \|\theta\|_{\ell^2} R |t-s|^{\eps/2}.
\end{align*}
Renaming $2\eps$ as $\eps$ gives us
$$\E\big[ \| Z_t-Z_s\|_{H^{-\eps}}^{2p}\big] \lesssim_{p,T}  \big(\sqrt{\kappa \delta^{\eps -1} \eps^{-1}}\,  \|\theta\|_{\ell^2} R \big)^{2p} |t-s|^{p\eps/2} . $$
Now for $\eps\in (0, 1/2)$, choosing $p > 2/\eps$ (which is allowed since otherwise we can control the $L^p$-norm by the $L^{\tilde{p}}$-one for some $\tilde{p}>p$) and applying Kolmogorov's continuity criterion (which produces some additional constants depending on $p,\eps$) we obtain \eqref{eq:estim-Z-ell2}.

The proof of \eqref{eq:estim-Z-ellinfty} is very similar, so we only sketch it. Repeating the initial computations with $H^{-d/2-2\eps}$ in place of $H^{-\eps}$, we arrive at
\begin{align*}
\E\Big[ \| Z_t\|_{H^{-d/2-2\eps}}^{2p} \Big]^{1/2p}
& \lesssim_p \sqrt{\kappa}\, \E \bigg[ \bigg( \sum_{k,i} \theta_k^2 \int_0^t \big\| e^{\delta(t-r)\Delta} ( \sigma_{k,i}\cdot\nabla \omega_r) \big\|_{H^{-d/2-2\eps}}^2 \d r\bigg)^p\bigg]^{1/2p}\\
& \lesssim \sqrt{\kappa \delta^{\eps-1}}\, \|\theta \|_{\ell^\infty} \E \bigg[\bigg( \int_0^t |t-r|^{\eps-1} \sum_{k,i}  \| \sigma_{k,i}\cdot\nabla \omega_r \|_{H^{-1-d/2-\eps}}^2 \d r\bigg)^p\bigg]^{1/2p}
\end{align*}
where in the last step we used again Lemma \ref{lem:heat-kernel-estim}(i) with $\rho=1-\eps$. We have
$$\| \sigma_{k,i}\cdot\nabla \omega_r\|_{H^{-1-d/2-\eps}} = \| \nabla\cdot (\sigma_{k,i}\, \omega_r) \|_{H^{-1-d/2-\eps}} \lesssim \| \sigma_{k,i}\, \omega_r \|_{H^{-d/2-\eps}} \lesssim \| e_k\, \omega_r \|_{H^{-d/2-\eps}}$$
and
$$\sum_k  \|e_k\, \omega_r\|_{H^{-d/2-\eps}}^2 \lesssim \sum_k  \sum_{l}\frac1{|l|^{d+2\eps}} |\<\omega_r, e_{l-k}\>|^2 = \|\omega_r \|_{L^2}^2 \sum_{l} \frac1{|l|^{d+2\eps}} \lesssim \eps^{-1} \|\omega_r \|_{L^2}^2.  $$
The last step is due to
$$\sum_{l} \frac1{|l|^{d+2\eps}} \leq \int_{\{x\in\R^d: |x|\geq 1/2\}} \frac{\d x}{|x|^{d+2\eps}} \sim \int_{1/2}^\infty \frac{\d s}{s^{1+2\eps}} \sim \eps^{-1}. $$
Combining these estimates with Assumption \ref{ass:bddness} yields
\begin{align*}
\E\Big[ \| Z_t\|_{H^{-d/2-2\eps}}^{2p} \Big]^{1/2p} & \lesssim \sqrt{\kappa \delta^{\eps-1} \eps^{-1}}\, \|\theta \|_{\ell^\infty} R \bigg(\int_0^t |t-r|^{\eps-1}\d r \bigg)^{1/2} \\
&\lesssim_T \sqrt{\kappa \delta^{\eps-1} \eps^{-2}}\, \|\theta \|_{\ell^\infty} R.
\end{align*}
From here on, the proof is almost identical to the one of \eqref{eq:estim-Z-ell2}.
\end{proof}

\begin{corollary}
\label{cor-stoch-convol} Suppose now $\| \theta\|_{\ell^{2}}=1$. Then for any
$\beta\in(0,d/2]$ and any $\varepsilon\in(0,\beta]$, it holds
\begin{equation}
\label{eq:estim-Z-interp.0}\mathbb{E} \bigg[ \sup_{t\in[0,T]} \|
Z_{t}\|_{H^{-\beta}}^{p}\bigg]^{1/p} \lesssim_{\varepsilon,p,T} \sqrt
{\kappa\delta^{\varepsilon-1}} \, \|\theta\|_{\ell^{\infty}}^{2(\beta
-\varepsilon)/d} R.
\end{equation}
In particular, if $d=2$, then for any $\beta\in(0,1]$ and any $\varepsilon
\in(0,\beta]$, it holds
\begin{equation}
\label{eq:estim-Z-interp}\mathbb{E} \bigg[ \sup_{t\in[0,T]} \| Z_{t}%
\|_{H^{-\beta}}^{p}\bigg]^{1/p} \lesssim_{\varepsilon,p,T} \sqrt{\kappa
\delta^{\varepsilon-1}} \, \|\theta\|_{\ell^{\infty}}^{\beta-\varepsilon} R.
\end{equation}

\end{corollary}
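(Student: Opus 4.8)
The plan is to derive both displays of Corollary \ref{cor-stoch-convol} by interpolating the two endpoint bounds of Lemma \ref{lem:estim-Z} across the Sobolev scale. Since we now assume $\|\theta\|_{\ell^{2}}=1$, estimate \eqref{eq:estim-Z-ell2} carries no $\|\theta\|_{\ell^{\infty}}$-factor, so the two inputs I want to combine are, for a suitable $\varepsilon$,
\[
\E\Big[\sup_{t\in[0,T]}\|Z_{t}\|_{H^{-\varepsilon}}^{p}\Big]^{1/p}\lesssim_{\varepsilon,p,T}\sqrt{\kappa\delta^{\varepsilon-1}}\,R,
\qquad
\E\Big[\sup_{t\in[0,T]}\|Z_{t}\|_{H^{-d/2-\varepsilon}}^{p}\Big]^{1/p}\lesssim_{\varepsilon,p,T}\sqrt{\kappa\delta^{\varepsilon-1}}\,\|\theta\|_{\ell^{\infty}}\,R.
\]
The key structural point is that I use the \emph{same} $\varepsilon$ in both, so that the common prefactor $\sqrt{\kappa\delta^{\varepsilon-1}}$ simply factors out; this is what will make the $\kappa$- and $\delta$-powers in \eqref{eq:estim-Z-interp.0} come out for free, rather than having to be matched by hand.

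First I would record the pathwise Sobolev interpolation inequality on $\T^{d}$. For a mean-zero $u$ and exponents $0\le s_{0}\le\beta\le s_{1}$ with $\beta=(1-\vartheta)s_{0}+\vartheta s_{1}$, writing $|k|^{-2\beta}=(|k|^{-2s_{0}})^{1-\vartheta}(|k|^{-2s_{1}})^{\vartheta}$ and applying Hölder on the Fourier side gives $\|u\|_{H^{-\beta}}\le\|u\|_{H^{-s_{0}}}^{1-\vartheta}\|u\|_{H^{-s_{1}}}^{\vartheta}$; note $Z$ is mean-zero since each $\sigma_{k,i}\cdot\nabla\omega$ is. I apply this with $s_{0}=\varepsilon$ and $s_{1}=d/2+\varepsilon$, which satisfy $s_{0}\le\beta\le s_{1}$ under the hypotheses $\varepsilon\le\beta\le d/2$. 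The constraint $\beta=\varepsilon+\vartheta\,d/2$ then forces
\[
\vartheta=\frac{2(\beta-\varepsilon)}{d},
\]
and one checks $\vartheta\in[0,1]$ precisely because $0\le\beta-\varepsilon\le\beta\le d/2$. Taking $\sup_{t}$ inside the interpolation inequality and then Hölder's inequality in $\omega$ with conjugate exponents $1/(1-\vartheta)$ and $1/\vartheta$ yields
\[
\E\Big[\sup_{t}\|Z_{t}\|_{H^{-\beta}}^{p}\Big]^{1/p}\le\E\Big[\sup_{t}\|Z_{t}\|_{H^{-\varepsilon}}^{p}\Big]^{(1-\vartheta)/p}\,\E\Big[\sup_{t}\|Z_{t}\|_{H^{-d/2-\varepsilon}}^{p}\Big]^{\vartheta/p}.
\]
Substituting the two endpoint bounds, the factors $R$ and $\sqrt{\kappa\delta^{\varepsilon-1}}$ recombine with total exponent $(1-\vartheta)+\vartheta=1$, while $\|\theta\|_{\ell^{\infty}}$ is raised to the power $\vartheta=2(\beta-\varepsilon)/d$; this is exactly \eqref{eq:estim-Z-interp.0}. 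Specialising to $d=2$ gives $\vartheta=\beta-\varepsilon$ and hence \eqref{eq:estim-Z-interp}.

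I expect the substance of the argument to be this exponent bookkeeping; the only genuinely delicate point is the admissible range of $\varepsilon$. The clean choice $s_{0}=s_{1}-d/2=\varepsilon$ requires both endpoint estimates of Lemma \ref{lem:estim-Z} to be available at the given $\varepsilon$, which is the main thing to confirm: the stated range suffices once one observes that those estimates persist for $\varepsilon$ up to (just below) $1$, the cap coming from the heat-kernel smoothing $H^{-1}\to H^{-\varepsilon}$ used in their proof. In particular this is harmless for the two-dimensional statement \eqref{eq:estim-Z-interp}, where $\beta\le1$ already confines $\varepsilon$ to $(0,1]$. Finally, I would note that the implicit constants inherited from Lemma \ref{lem:estim-Z} degenerate as $\varepsilon\to0$, which causes no difficulty here since $\varepsilon$ is fixed throughout.
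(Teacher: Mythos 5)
Your proof is correct and follows essentially the same route as the paper: the paper's own argument sets $\lambda=2(\beta-\varepsilon)/d$, notes $-\beta=\lambda(-d/2-\varepsilon)+(1-\lambda)(-\varepsilon)$, and interpolates between \eqref{eq:estim-Z-ell2} and \eqref{eq:estim-Z-ellinfty} exactly as you do, with your $\vartheta$ equal to their $\lambda$. Your additional remarks (spelling out the H\"older step in $\omega$, and observing that the endpoint bounds of Lemma \ref{lem:estim-Z} persist for $\varepsilon$ beyond the stated range $(0,1/2)$, which the corollary implicitly needs) only make explicit what the paper leaves tacit.
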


\begin{proof}
Setting $\lambda=2(\beta-\eps)/d$, then $\lambda\in [0,1)$ and $-\beta= \lambda(-d/2-\eps)+ (1-\lambda)(-\eps)$. Estimate \eqref{eq:estim-Z-interp.0} readily follows by interpolating between \eqref{eq:estim-Z-ell2} and \eqref{eq:estim-Z-ellinfty}.
\end{proof}

\subsection{Mild formulation of the equation}

\label{sec:mild-form}

In this section we rigorously show that weak solutions of a general class of
SPDEs also satisfy the corresponding mild formulation. The result is
classical, cf. \cite[Theorem 6.5]{DPZ92}, but rather unusual for SPDEs of
hyperbolic nature (think of stochastic 2D Euler, namely eq. \eqref{stoch-NS}
with $\nu=0$), which is why we prefer to present the argument with some care.

We consider a class of SPDEs of the form
\begin{equation}
\label{eq:SPDE-stratonovich}\mathrm{d} \omega_{t} = [\nu\Delta\omega_{t} +
F(\omega_{t})]\, \mathrm{d} t + \circ\mathrm{d} W_{t}\cdot\nabla\omega_{t}.
\end{equation}
For simplicity we assume we are on the torus $\mathbb{T}^{2}$ with periodic
boundary condition and noise defined as in \eqref{noise} for a given pair
$(\kappa,\theta)$; the generalization to higher dimension $d\geq3$ or
different domains $\Omega\subset\mathbb{R}^{d}$ is omitted here. By standard
computations (see \cite{Gal, FGL2} for more details) we can rewrite the above
SPDE in the equivalent It\^o form (we set $\delta=\kappa+\nu$ for notational
simplicity)
\begin{equation}
\label{eq:SPDE-ito}\mathrm{d} \omega_{t} = [\delta\Delta\omega_{t} +
F(\omega_{t})]\,\mathrm{d} t + \mathrm{d} W_{t}\cdot\nabla\omega_{t}.
\end{equation}

We impose the following assumption on the nonlinearity $F$: there exists
$s\geq0$ big enough and an increasing function $G:\mathbb{R}_{+} \to
\mathbb{R}_{+}$ such that $F$ maps $L^{2}$ into $H^{-s}$ and satisfies
\[
\| F(\omega)\|_{H^{-s}} \leq G(\| \omega\|_{L^{2}})\quad\forall\, \omega\in
L^{2}.
\]
It is immediate to verify that the nonlinearities associated to the Euler,
mSQG and Keller-Segel equations satisfy this condition.

\begin{definition}
\label{defn:weak-sol} Let $(\Omega,\mathcal{F},\{\mathcal{F}_{t}\}_{t\geq0},
\mathbb{P})$ be a complete filtered probability space on which $W$ is defined
by \eqref{noise} (i.e. $W^{k}$ are $\mathcal{F}_{t}$-Brownian motions); let
$\omega$ be an $L^{2}$-valued, $\mathcal{F}_{t}$-adapted stochastic process
satisfying Assumption \ref{ass:bddness}. We say that $\omega$ is a weak
solution to \eqref{eq:SPDE-stratonovich} with initial data $\omega_{0}\in
L^{2}$ if for any $\varphi\in C^{\infty}(\mathbb{T}^{2})$, $\mathbb{P}$-a.s.
it holds, for all $t\in[0,T]$,
\begin{align*}
\langle\omega_{t}, \varphi\rangle= \langle\omega_{0},\varphi\rangle+ \int%
_{0}^{t} \big[\langle\omega_{s}, \delta\Delta\varphi\rangle+ \langle
F(\omega_{s}), \varphi\rangle\big]\, \mathrm{d} s - \sqrt{2\kappa}\sum_{k}
\theta_{k} \int_{0}^{t} \langle\omega_{s}, \sigma_{k}\cdot\nabla\varphi
\rangle\, \mathrm{d} W^{k}_{s} .
\end{align*}

\end{definition}

In the above definition we have imposed for simplicity Assumption
\ref{ass:bddness}, as it fits nicely with the divergence free structure of the
noise and the SPDEs considered here; but the requirement can be further weakened.

Under Assumption \ref{ass:bddness}, the function
\[
t\mapsto\int_{0}^{t} [\delta\Delta\omega_{s} + F(\omega_{s})]\, \mathrm{d} s
\]
is pathwise defined as an element of $C([0,T];H^{-s^{\prime}})$ for
$s^{\prime}=s\vee2$, once we intepret the integral in the Bochner sense.
Indeed we have the estimate
\begin{align*}
\bigg\| \int_{0}^{t} [\delta\Delta\omega_{s} + F(\omega_{s})]\, \mathrm{d}
s\bigg\|_{H^{-s^{\prime}}} \leq\int_{0}^{T} \big[\delta\| \Delta\omega
_{s}\|_{H^{-s^{\prime}}} + \| F(\omega_{s})\|_{H^{-s^{\prime}}} \big]\,
\mathrm{d} s \leq T( \delta R + G(R)).
\end{align*}
Similarly, by the computations from Section \ref{subsec-stoch-convol}, the
process
\[
M_{t} = \int_{0}^{t} \nabla\omega_{s}\cdot\mathrm{d} W_{s}%
\]
is a well-defined, $H^{-1}$-valued, continuous martingale. It is then easy to
check (take a countable collection $\{\varphi_{n}\}_{n\in\mathbb{N}}\subset
C^{\infty}(T^{2})$ which is dense in $H^{s^{\prime}}$) that $\omega$ is a weak
solution in the sense of Definition \ref{defn:weak-sol} if and only if it
satisfies
\[
\omega_{t} = \omega_{0} + \int_{0}^{t} [\delta\Delta\omega_{s}+F(\omega
_{s})]\, \mathrm{d} s + \int_{0}^{t} \nabla\omega_{s}\cdot\mathrm{d} W_{s}
\]
with the integrals being interpreted as above.

\begin{lemma}
\label{lem:weak-implies-mild} Let $\omega$ be a weak solution to the SPDE
\eqref{eq:SPDE-stratonovich} in the sense of Definition \ref{defn:weak-sol},
let $M$ be defined as above and set $P_{t}=e^{\delta t\Delta}$. Then,
$\mathbb{P}$-a.s., it holds
\begin{equation}
\label{eq:mild-form}\omega_{t} = P_{t} \omega_{0} + \int_{0}^{t} P_{t-s}
F(\omega_{s})\, \mathrm{d} s + \int_{0}^{t} P_{t-s}\, \mathrm{d} M_{s}
\quad\forall\, t\in[0,T],
\end{equation}
where the second integral is a stochastic convolution as defined in Section
\ref{subsec-stoch-convol}.
\end{lemma}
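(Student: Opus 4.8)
The plan is to run the classical \textquotedblleft weak implies mild\textquotedblright\ argument via the semigroup, in the spirit of \cite[Theorem 6.5]{DPZ92}, exploiting that $P_t = e^{\delta t\Delta}$ is self-adjoint and that its generator $\delta\Delta$ appears explicitly in the weak formulation of Definition \ref{defn:weak-sol}. Fix $t\in[0,T]$ and a smooth test function $\psi\in C^\infty(\mathbb{T}^2)$, and consider the real-valued process $s\mapsto Y_s := \langle\omega_s, P_{t-s}\psi\rangle$ on $[0,t]$. The key observation is that $s\mapsto P_{t-s}\psi$ is smooth with $\partial_s(P_{t-s}\psi) = -\delta\Delta P_{t-s}\psi$, so computing $Y_t-Y_0$ by a product rule should make the $\delta\Delta$ contributions cancel, leaving only $F$ and the noise.

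First I would apply a product/It\^o-type formula to $Y_s$, obtaining
\[
\langle\omega_t,\psi\rangle - \langle\omega_0, P_t\psi\rangle = \int_0^t \langle \mathrm{d}\omega_s, P_{t-s}\psi\rangle + \int_0^t \langle\omega_s, \partial_s(P_{t-s}\psi)\rangle\,\mathrm{d}s ,
\]
where the first integral is read off from the weak formulation with the time-dependent test function $P_{t-s}\psi$. Inserting the weak form and using $\partial_s(P_{t-s}\psi) = -\delta\Delta P_{t-s}\psi$, the two $\delta\Delta$ terms cancel exactly, and after using self-adjointness $\langle\omega_0, P_t\psi\rangle = \langle P_t\omega_0,\psi\rangle$ one is left with
\[
\langle\omega_t,\psi\rangle = \langle P_t\omega_0, \psi\rangle + \int_0^t \langle F(\omega_s), P_{t-s}\psi\rangle\,\mathrm{d}s - \sqrt{2\kappa}\sum_k\theta_k\int_0^t \langle\omega_s, \sigma_k\cdot\nabla P_{t-s}\psi\rangle\,\mathrm{d}W^k_s .
\]
Then I would move the semigroup onto the remaining terms: self-adjointness gives $\langle F(\omega_s), P_{t-s}\psi\rangle = \langle P_{t-s}F(\omega_s),\psi\rangle$, while the divergence-free condition on $\sigma_k$ and integration by parts yield $\langle\omega_s, \sigma_k\cdot\nabla P_{t-s}\psi\rangle = -\langle\sigma_k\cdot\nabla\omega_s, P_{t-s}\psi\rangle$. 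Recognizing $\mathrm{d}M_s = \sqrt{2\kappa}\sum_k\theta_k(\sigma_k\cdot\nabla\omega_s)\,\mathrm{d}W^k_s$, the stochastic term becomes $\langle\int_0^t P_{t-s}\,\mathrm{d}M_s,\psi\rangle$. Since $\psi$ ranges over a dense subset of $H^{s'}$ and every term is a well-defined element of $H^{-s'}$, this proves \eqref{eq:mild-form} for fixed $t$; continuity in $t$ of both sides (the stochastic convolution being continuous by the estimates of Section \ref{subsec-stoch-convol}) upgrades it to a $\mathbb{P}$-a.s.\ identity holding simultaneously for all $t\in[0,T]$.

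The hard part will be justifying the product-rule step rigorously, since Definition \ref{defn:weak-sol} only postulates the weak identity for \emph{fixed} test functions, whereas $P_{t-s}\psi$ varies with $s$. I would handle this by Riemann-sum approximation: partition $[0,t]$ as $0 = s_0 < \cdots < s_N = t$, telescope
\[
\langle\omega_t,\psi\rangle - \langle\omega_0, P_t\psi\rangle = \sum_j \big(\langle\omega_{s_{j+1}}, P_{t-s_{j+1}}\psi\rangle - \langle\omega_{s_j}, P_{t-s_j}\psi\rangle\big),
\]
and split each increment into a part $\langle\omega_{s_{j+1}}-\omega_{s_j}, P_{t-s_{j+1}}\psi\rangle$, evaluated via the weak formulation with the \emph{frozen} test function $P_{t-s_{j+1}}\psi$ over $[s_j,s_{j+1}]$, and a part $\langle\omega_{s_j}, (P_{t-s_{j+1}}-P_{t-s_j})\psi\rangle$, controlled by the smoothness of $r\mapsto P_r\psi$. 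As the mesh tends to zero, the deterministic pieces converge by dominated convergence using Assumption \ref{ass:bddness} and the bound $\|F(\omega_s)\|_{H^{-s}}\le G(R)$, while the stochastic pieces converge in $L^2(\Omega)$ by the It\^o isometry, continuity of the integrands, and the $\ell^2$-summability in $k$ already exploited in Section \ref{subsec-stoch-convol}. This limiting procedure is the only genuinely technical point; the rest are algebraic identities relying on the self-adjointness of $P_{t-s}$ and the divergence-free structure of the noise.
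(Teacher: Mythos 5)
Your proposal is correct, but it follows a genuinely different route from the paper's proof. The paper exploits that on the torus the generator $\delta\Delta$ diagonalizes in the Fourier basis: it takes $\varphi=e_j$ as the \emph{fixed} test function in Definition \ref{defn:weak-sol}, which turns the weak formulation into a scalar semimartingale equation for each Fourier coefficient $\langle\omega_t,e_j\rangle$, and then applies the ordinary It\^o product rule to $e^{-t\delta\lambda_j}\langle\omega_t,e_j\rangle$ (variation of constants, mode by mode) before assembling over $j$ on a single full-measure set. Because $P_{t-s}e_j=e^{-(t-s)\delta\lambda_j}e_j$ is a scalar multiple of a fixed test function, the issue you correctly single out as the ``hard part'' --- that the weak identity is only postulated for time-independent test functions, so the product rule for $\langle\omega_s,P_{t-s}\psi\rangle$ needs separate justification --- simply never arises; no Riemann-sum approximation is needed. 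Your argument is the general Da Prato--Zabczyk scheme (the very reference the paper cites), and your telescoping/freezing justification of the product rule is sound: the deterministic increments converge by dominated convergence using Assumption \ref{ass:bddness} and the bound on $F$, and the stochastic ones by It\^o isometry together with the smoothness of $r\mapsto P_r\psi$ for smooth $\psi$. What your approach buys is robustness (it would survive on domains or with generators lacking an explicit eigenbasis); what the paper's buys is brevity and the avoidance of the limiting procedure altogether. Note that specializing your $\psi$ to $e_j$ collapses your argument exactly onto the paper's, so the two are consistent; the signs and the identification of the stochastic term with $\langle\int_0^t P_{t-s}\,\mathrm{d}M_s,\psi\rangle$ in your final display also match \eqref{eq:mild-form}.
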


\begin{proof}
By definition of the martingale $M$, for any $\varphi\in C^\infty(\T^2)$ it holds
\begin{align*}
\d \langle \varphi, M_t\rangle = \langle \varphi, \d M_t\rangle = - \sqrt{2\kappa} \sum_k \theta_k \langle \omega_t, \sigma_k\cdot\nabla \varphi \rangle\, \d W^k_t.
\end{align*}
For $j\in\Z^2$, set $\lambda_j = 4\pi^2 |j|^2$ and take $\varphi=e_j$ in the definition of weak solution, then
\begin{align*}
\d \langle \omega_t,e_j\rangle
= \big[-\delta\lambda_j\langle \omega_t, e_j\rangle + \langle F(\omega_t), e_j\rangle \big]\, \d t + \langle e_j, \d M_t\rangle.
\end{align*}
Applying the It\^o formula to the process $e^{-t \delta \lambda_j} \langle \omega_t,e_j\rangle$ and integrating in time yield, $\P$-a.s.,
\[
\langle \omega_t ,e_j\rangle = e^{-t \delta\lambda_j} \langle \omega_0,e_j\rangle + \int_0^t e^{-(t-s) \delta\lambda_j}\langle F(\omega_s), e_j\rangle\, \d s + \int_0^t e^{-(t-s) \delta\lambda_j} \langle e_j, \d M_s\rangle \quad \forall\, t\in [0,T].
\]
We can then find $\Gamma\subset \Omega$ of full probability such that the above equality holds for all $t\in [0,T]$ and all $j\in \Z^2$. But this is exactly \eqref{eq:mild-form} written in Fourier modes.
\end{proof}

\section{Proofs of Theorem \ref{thm-1} and related models}\label{sec:euler}

In this section we first prove Theorem \ref{thm-1}, then we
adapt the same idea to treat other fluid dynamical models, including the 2D
Boussinesq system and mSQG equations, which will be presented in Sections
\ref{sec:boussinesq} and \ref{sec:mSQG} respectively.

\subsection{Proofs of Theorem \ref{thm-1} and Corollary \ref{cor-approx-uniq}}\label{subsec-proof}

Let us quickly recall the setting: given $\omega_{0}\in L^{2}$, we consider a
weak solution $\omega$ to the stochastic Euler/Navier-Stokes equation
\eqref{stoch-NS} with the property that
\begin{equation}
\label{L-2-bound}\sup_{t\geq0} \bigg\{ \|\omega_{t}\|_{L^{2}}^{2} + 2\nu
\int_{0}^{t} \| \nabla\omega_{s}\|_{L^{2}}^{2}\, \mathrm{d} s \bigg\}\leq
\|\omega_{0} \|_{L^{2}} \quad\mathbb{P}\mbox{-a.s.}
\end{equation}
For $\nu=0$ weak existence of such solutions follows from \cite[Theorem
2.2]{FGL}, while for $\nu>0$ strong existence and uniqueness is classical (it
can also be derived from the results of \cite{FGL2}). Similarly, we denote by
$\tilde{\omega}$ the solution to the deterministic Navier--Stokes
\eqref{determ-NS} with initial data $\omega_{0}$, which satisfies
\begin{equation}
\label{L-2-bound-determ}\sup_{t\geq0} \bigg\{ \|\tilde\omega_{t}\|_{L^{2}}^{2}
+ 2(\nu+\kappa)\int_{0}^{t} \| \nabla\tilde\omega_{s}\|_{L^{2}}^{2}\,
\mathrm{d} s \bigg\}\leq\|\omega_{0} \|_{L^{2}};
\end{equation}
existence and uniqueness of $\tilde{\omega}$ in the class $L^{2}%
(0,T;H^{1})\cap C([0,T];L^{2})$ is again classical, cf. \cite{Tem} (here
$\nu=0$ does not make any difference due to the presence of $\kappa>0$).

Before giving the proof, we need the following analytical lemma.

\begin{lemma}
\label{lem:estim-transport-euler} For $\omega\in L^{2}$, define $F(\omega):=
(K\ast\omega)\cdot\nabla\omega$; then for any $\alpha\in(0,1)$ it holds
\begin{equation}
\label{eq:estim-transport-euler}\| F(\omega)-F(\tilde{\omega})\|_{H^{-\alpha
-1}} \lesssim_{\alpha}\| \omega-\tilde{\omega}\|_{H^{-\alpha}} (\|
\omega\|_{L^{2}} + \| \tilde\omega\|_{H^{1}})\quad\forall\,\omega\in
L^{2},\tilde{\omega}\in H^{1}.
\end{equation}

\end{lemma}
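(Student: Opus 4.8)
The plan is to estimate $F(\omega)-F(\tilde\omega)$ by writing out the bilinear structure and decomposing the difference into two transport-type terms, each of which I can control using the product and transport estimates from Lemma \ref{lem-estimate}. Writing $u=K\ast\omega$ and $\tilde u=K\ast\tilde\omega$, I first split
\[
F(\omega)-F(\tilde\omega) = u\cdot\nabla\omega - \tilde u\cdot\nabla\tilde\omega
= (u-\tilde u)\cdot\nabla\tilde\omega + u\cdot\nabla(\omega-\tilde\omega).
\]
This is the natural decomposition: the first term carries the velocity difference against the smoother profile $\tilde\omega\in H^1$, while the second term transports the vorticity difference $\omega-\tilde\omega$ by the full velocity $u$. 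The goal is to bound each piece in $H^{-\alpha-1}$ by $\|\omega-\tilde\omega\|_{H^{-\alpha}}$ times the stated factors.

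For the second term $u\cdot\nabla(\omega-\tilde\omega)$, I would use the divergence-free structure (the Biot--Savart kernel produces a divergence-free velocity) to write it as $\nabla\cdot(u(\omega-\tilde\omega))$, so that controlling it in $H^{-\alpha-1}$ reduces to estimating the product $u(\omega-\tilde\omega)$ in $H^{-\alpha}$. Here $\omega-\tilde\omega$ sits in the negative space $H^{-\alpha}$ and $u$ is the velocity associated to $\omega\in L^2$, hence $u\in H^1$ by Biot--Savart, and in fact $u$ is bounded (or nearly so) since $\alpha<1$. The relevant tool is a paraproduct/duality estimate: pairing against a test function $\varphi\in H^{\alpha+1}$ and using that $u\cdot\varphi$ can be placed in $H^\alpha$ via a product estimate analogous to Lemma \ref{lem-estimate}(c)--(d), one gets a factor $\|u\|_{H^1}\lesssim\|\omega\|_{L^2}$ multiplying $\|\omega-\tilde\omega\|_{H^{-\alpha}}$. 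For the first term $(u-\tilde u)\cdot\nabla\tilde\omega$, I would instead exploit the regularity of $\tilde\omega\in H^1$: the velocity difference $u-\tilde u=K\ast(\omega-\tilde\omega)$ gains two derivatives of smoothness over $\omega-\tilde\omega$, so $u-\tilde u\in H^{-\alpha+1}=H^{1-\alpha}$, and pairing $(u-\tilde u)\cdot\nabla\tilde\omega$ against a test function and integrating by parts to move the gradient onto $u-\tilde u$ and $\tilde\omega$ yields a bound by $\|u-\tilde u\|_{H^{1-\alpha}}\|\tilde\omega\|_{H^1}\lesssim\|\omega-\tilde\omega\|_{H^{-\alpha}}\|\tilde\omega\|_{H^1}$.

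The main obstacle I anticipate is bookkeeping the Sobolev exponents so that every product estimate lands in an admissible range, in particular verifying that the condition $\alpha\in(0,1)$ is exactly what makes both pieces work: one needs $1-\alpha>0$ for the gain on $u-\tilde u$ to be genuine, and one needs $\alpha+1$ to be large enough relative to $\alpha$ that the product of a test function with the velocity stays in the right positive space. This is precisely the regime covered by Lemma \ref{lem-estimate}(b)--(d), so the strategy is to match each term to one of those cases (with the roles of $V$ and $f$ chosen appropriately) rather than redo the paraproduct analysis from scratch. Summing the two bounds and absorbing the constants into $\lesssim_\alpha$ gives the claimed inequality \eqref{eq:estim-transport-euler}.
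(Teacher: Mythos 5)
Your overall strategy (split the bilinear difference into two transport terms and feed each into Lemma \ref{lem-estimate}) is the same as the paper's, but your choice of decomposition,
\[
F(\omega)-F(\tilde\omega)=(u-\tilde u)\cdot\nabla\tilde\omega+u\cdot\nabla(\omega-\tilde\omega),
\]
breaks down on the second term. There you must estimate $\|\nabla\cdot(u(\omega-\tilde\omega))\|_{H^{-\alpha-1}}$, i.e.\ multiply the distribution $\omega-\tilde\omega\in H^{-\alpha}$ by the velocity $u=K\ast\omega$, which only has the regularity inherited from $\omega\in L^2$, namely $u\in H^1(\mathbb{T}^2)$. Your justification that ``$u$ is bounded (or nearly so) since $\alpha<1$'' is not correct: $H^1(\mathbb{T}^2)$ is the critical Sobolev space and does not embed into $L^\infty$ (nor into any $C^s$ with $s>0$), and the parameter $\alpha$ has nothing to do with this. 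By duality you would need the product estimate $\|u\varphi\|_{H^{\alpha}}\lesssim\|u\|_{H^1}\|\varphi\|_{H^{\alpha}}$ in dimension $2$, which is an endpoint case ($1+\alpha-d/2=\alpha$ exactly) and fails: the paraproduct $T_u\varphi$ requires $u\in L^\infty$ (or $C^s$ with $s>\alpha$, as in Lemma \ref{lem-estimate}(b)), and $H^1$ only gives a logarithmically divergent substitute. So this piece cannot be closed with the constants claimed in \eqref{eq:estim-transport-euler}. (Your first term is fine, and your first term alone actually only needs $\|\tilde\omega\|_{H^\alpha}$.)

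The paper's proof uses the other natural decomposition,
\[
F(\omega)-F(\tilde\omega)=\big[K\ast(\omega-\tilde\omega)\big]\cdot\nabla\omega+(K\ast\tilde\omega)\cdot\nabla(\omega-\tilde\omega),
\]
which is asymmetric in precisely the right way: the gradient of the \emph{difference} is paired with the smooth velocity $K\ast\tilde\omega\in H^2\hookrightarrow C^s$ ($s<1$), so Lemma \ref{lem-estimate}(b) applies with $f=\omega-\tilde\omega\in H^{-\alpha}$ (this is where $\alpha<1$ enters), while the rough $\nabla\omega$ is paired with the velocity difference $K\ast(\omega-\tilde\omega)\in H^{1-\alpha}$ and handled by Lemma \ref{lem-estimate}(d). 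This exploitation of the better regularity of $\tilde\omega$ is exactly the point the authors stress after the proof of Theorem \ref{thm-1}; your split attaches the negative-regularity factor to the wrong (rough) velocity and so misses it. The fix is simply to swap which factor carries the difference in each term.
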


\begin{proof}
It holds
\[\| F(\omega)-F(\tilde{\omega})\|_{H^{-\alpha-1}} \leq \| [K\ast (\omega-\tilde{\omega})]\cdot\nabla \omega\|_{H^{-\alpha-1}}+ \| (K\ast \tilde{\omega}) \cdot\nabla (\omega-\tilde{\omega})\|_{H^{-\alpha-1}}=:I_1 +I_2.
\]
Applying Lemma \ref{lem-estimate}(d) with $\beta=\alpha$, we can estimate $I_1$ by
\begin{align*}
I_1 \lesssim_\alpha \| K\ast (\omega-\tilde{\omega})\|_{H^{1-\alpha}} \| \omega\|_{L^2} \lesssim \| \omega-\tilde{\omega}\|_{H^{-\alpha}} \| \omega\|_{L^2};
\end{align*}
on the other hand, invoking Lemma \ref{lem-estimate}(b) for $I_2$ provides
\[
I_2 \lesssim_\alpha \| K\ast \tilde{\omega}\|_{H^2} \| \omega-\tilde{\omega}\|_{H^{-\alpha}} \lesssim \| \tilde{\omega}\|_{H^1} \| \omega-\tilde{\omega}\|_{H^{-\alpha}}.
\]
Combining the two estimates gives the conclusion.
\end{proof}

\begin{proof}[Proof of Theorem \ref{thm-1}]
Let $\omega,\tilde{\omega}$ be solutions as above, for the same initial data $\omega_0\in L^2$; let $F$ be defined as in Lemma \ref{lem:estim-transport-euler}. By Section \ref{sec:mild-form}, we know that $\omega,\tilde{\omega}$ both satisfy the mild formulation; by the same reasoning as in Section \ref{subsec-heuristic}, their difference $\xi:=\omega-\tilde{\omega}$ solves
\[
\xi_t = -\int_0^t e^{(\kappa+\nu)(t-s)\Delta} [F(\omega_s)-F(\tilde{\omega}_s)]\, \d s - Z_t,
\]
where the stochastic convolution $Z$ is given by
\[
Z_t = \sqrt{2\kappa}\int_0^t \sum_k \theta_k e^{(\kappa+\nu)(t-s)\Delta} (\sigma_k\cdot\nabla \omega_s)\, \d W^k_s.
\]
By Lemmas \ref{lem:heat-kernel} and \ref{lem:estim-transport-euler}, we can estimate $\xi$ as follows:
\begin{align*}
\| \xi_t\|_{H^{-\alpha}}^2
& \lesssim_\alpha \frac{1}{\kappa+\nu} \int_0^t \| F(\omega_s)-F(\tilde{\omega}_s)\|^2_{H^{-\alpha-1}}\, \d s + \| Z_t\|_{H^{-\alpha}}^2\\
& \lesssim_\alpha \frac1{\kappa+\nu} \int_0^t \| \xi_s\|_{H^{-\alpha}}^2 \big(\| \omega_s \|_{L^2}^2 + \|\tilde \omega_s\|_{H^1}^2 \big)\, \d s + \| Z_t \|_{H^{-\alpha}}^2.
\end{align*}
Gronwall's inequality then implies the existence of $C=C(\alpha)$ such that
\begin{equation}\label{eq:intermediate-estim}
\| \xi_t\|_{H^{-\alpha}}^2 \lesssim \bigg(\sup_{t\in [0,T]} \| Z_t \|_{H^{-\alpha}}^2\bigg) \exp\bigg( \frac{C}{\kappa+\nu} \int_0^T  \big(\| \omega_s \|_{L^2}^2 + \|\tilde \omega_s\|_{H^1}^2\big)\, \d s \bigg) .
\end{equation}
Recalling that $\omega$ and $\tilde{\omega}$ satisfy respectively the a priori estimates \eqref{L-2-bound} and \eqref{L-2-bound-determ}, we arrive at
$$\| \xi_t\|_{H^{-\alpha}}^2 \lesssim \bigg(\sup_{t\in [0,T]} \| Z_t \|_{H^{-\alpha}}^2\bigg) \exp\bigg( C\, \frac{1+T(\kappa+\nu)}{(\kappa+\nu)^2} \|\omega_0 \|_{L^2}^2 \bigg) .$$
Taking expectation on both sides and applying \eqref{eq:estim-Z-interp} with $\delta= \kappa+\nu \geq\kappa$ yield the assertion (i).

If $\nu>0$, we can employ the a priori estimates in a different manner, giving
$$\aligned
\int_0^T \big(\| \omega_s\|_{L^2}^2 + \| \tilde{\omega}_s\|_{H^1}^2 \big)\, \d s
&\lesssim \int_0^{+\infty} \big(\| \nabla\omega_s\|_{L^2}^2 + \| \nabla\tilde{\omega}_s\|_{L^2}^2 \big)\, \d s \\
&\lesssim \bigg(\frac{1}{\nu} + \frac{1}{\kappa+\nu} \bigg)\|\omega_0 \|_{L^2}^2  \leq \frac{2}{\nu} \|\omega_0 \|_{L^2}^2;
\endaligned $$
inserting this estimate in \eqref{eq:intermediate-estim} and taking expectation as before readily gives (ii).
\end{proof}

Let us stress the importance of the asymmetric estimate
\eqref{eq:estim-transport-euler} in our analysis, especially in order to
achieve a convergence rate which is uniform in $\nu\geq0$. Indeed we exploit
crucially the information on the regularity of $\tilde{\omega}$, which is
better than the one available for $\omega$ (for $\nu=0$ estimate
\eqref{L-2-bound} only gives a control on its $L^{2}$-norm). The same idea
will be used in the next sections for other fluid dynamics equations.

We complete this section with

\begin{proof}[Proof of Corollary \ref{cor-approx-uniq}]
The proof is very simple. Recall that $\tilde\omega$ is the unique solution to the deterministic 2D Navier-Stokes equation \eqref{determ-NS} with initial data $\omega_0\in L^2(\T^2)$; we regard its law as a delta Dirac mass $\delta_{\tilde\omega}$ on $C([0,T], H^{-\alpha})$. Then for any $Q,Q'\in \mathcal L_\theta$, by the triangle inequality for the Wasserstein distance,
$$d_p(Q,Q') \leq d_p(Q,\delta_{\tilde\omega}) + d_p(Q',\delta_{\tilde\omega}).$$
Let $\omega$ (resp. $\omega'$) be a weak solution to the stochastic 2D Euler equation \eqref{stoch-NS} (taking $\nu=0$) with law $Q$ (resp. $Q'$); here $\omega$ and $\omega'$ might be defined on two different probability spaces, but we do not distinguish the expectations below. Then we have
$$d_p(Q,Q') \leq \E\Big[\|\omega- \tilde\omega\|_{C([0,T], H^{-\alpha})}^p \Big]^{1/p} + \E\Big[\|\omega'- \tilde\omega\|_{C([0,T], H^{-\alpha})}^p \Big]^{1/p}.$$
Combining this inequality with Theorem \ref{thm-1} and choosing $\eps=\alpha/2$, we immediately obtain the desired result. The second inequality follows from the first one and Example \ref{ex:coefficients}-(1).
\end{proof}

\subsection{2D Boussinesq system}

\label{sec:boussinesq}

The 2D Boussinesq system models the evolution of velocity field of an
incompressible fluid under a vertical force, which is proportional to some
scalar field such as the temperature, the latter being transported by the
former. We refer to \cite{Majda} for the geophysical background of the system.
In this section we aim at deriving similar quantitative estimates between the
solutions to the stochastic 2D inviscid Boussinesq model (in vorticity form)
\begin{equation}
\label{eq:stoch-boussinesq}%
\begin{cases}
\mathrm{d} \gamma^{1} + u^{1}\cdot\nabla\gamma^{1} \mathrm{d} t +
\circ\mathrm{d} W\cdot\nabla\gamma^{1} = \nu\Delta\gamma^{1} \mathrm{d} t,\\
\mathrm{d} \omega^{1} + u^{1}\cdot\nabla\omega^{1} \mathrm{d} t +
\circ\mathrm{d} W\cdot\nabla\omega^{1} = \partial_{1} \gamma^{1} \mathrm{d} t
\end{cases}
\end{equation}
and those of the deterministic viscous system
\begin{equation}
\label{eq:determ-boussinesq}%
\begin{cases}
\partial_{t} \gamma^{2} + u^{2}\cdot\nabla\gamma^{2} = (\kappa+\nu)
\Delta\gamma^{2},\\
\partial_{t} \omega^{2} + u^{2}\cdot\nabla\omega^{2} = \kappa\Delta\omega^{2}
+ \partial_{1} \gamma^{2}.
\end{cases}
\end{equation}
In the above equations, $u^{1}= K\ast\omega^{1}$ and $u^{2}= K\ast\omega^{2}$
where $K$ is still the Biot-Savart kernel. As before, we take identical
initial data $\omega^{1}_{0}=\omega^{2}_{0}=\omega_{0}\in L^{2}(\mathbb{T}%
^{2})$, $\gamma^{1}_{0}=\gamma^{2}_{0}=\gamma_{0}\in L^{2}(\mathbb{T}^{2})$,
and the noise $W$ is the same as in Section \ref{subsec-quantitat}. Recall
that there exist weak solutions to \eqref{eq:stoch-boussinesq} satisfying the
following a priori estimates: $\mathbb{P}$-a.s.,
\begin{equation}
\label{eq:energy-bounds-boussinesq}\sup_{t\in[0,T]} \| \gamma^{1}_{t}%
\|^{2}_{L^{2}} + \nu\int_{0}^{T} \| \gamma^{1}_{t} \|_{H^{1}}\, \mathrm{d} t
\leq\|\gamma_{0}\|_{L^{2}}^{2}, \quad\sup_{t\in[0,T]} \| \omega^{1}_{t}%
\|^{2}_{L^{2}} \leq C_{\nu,T} \big(\|\omega_{0}\|^{2}_{L^{2}}+\|\gamma
_{0}\|^{2}_{L^{2}} \big)
\end{equation}
for some deterministic constant $C_{\nu,T}>0$, see \cite[Theorem 2.2]{Luo};
moreover
\[
\aligned
&  \sup_{t\in[0,T]} \| \gamma^{2}_{t}\|^{2}_{L^{2}} + (\kappa+\nu) \int%
_{0}^{T} \| \gamma^{2}_{t} \|_{H^{1}}^{2}\, \mathrm{d} t \leq\|\gamma
_{0}\|_{L^{2}}^{2}, \\
&  \sup_{t\in[0,T]} \| \omega^{2}_{t}%
\|^{2}_{L^{2}} + \kappa\int_{0}^{T} \| \omega^{2}_{t} \|_{H^{1}}^{2}\,
\mathrm{d} t \leq C_{\nu,T} \big(\|\omega_{0}\|^{2}_{L^{2}} +\|\gamma
_{0}\|^{2}_{L^{2}} \big)
\endaligned
\]
uniformly in $\kappa\geq0$, for the same constant $C_{\nu,T}$ (indeed the
presence of the additional viscosity $\kappa\Delta$ can only further improve
the control on the energy).

As before, we define two martingale terms $M,N$ by setting
\[
M_{t} = \int_{0}^{t} \nabla\gamma^{1}_{s}\cdot\mathrm{d} W_{s}, \quad N_{t} =
\int_{0}^{t} \nabla\omega^{1}_{s}\cdot\mathrm{d} W_{s}
\]
as well as the associated stochastic convolutions
\[
Z_{t} = \int_{0}^{t} e^{(\kappa+\nu)(t-s)\Delta}\, \mathrm{d} M_{s}%
,\quad\tilde{Z}_{t} = \int_{0}^{t} e^{\kappa(t-s)\Delta}\, \mathrm{d} N_{s}.
\]
Passing to It\^o form of the system \eqref{eq:stoch-boussinesq}, and rewriting
it and \eqref{eq:determ-boussinesq} in the corresponding mild formulations, we
arrive at
\begin{equation}
\label{eq:mild-form-boussinesq}%
\begin{cases}
\gamma^{1}_{t} = e^{(\kappa+\nu)t \Delta}\gamma_{0} -\int_{0}^{t}
e^{(\kappa+\nu)(t-s)\Delta} (u^{1}_{s}\cdot\nabla\gamma^{1}_{s})\,\mathrm{d} s
- Z_{t},\\
\omega^{1}_{t} = e^{\kappa t \Delta} \omega_{0}-\int_{0}^{t} e^{\kappa
(t-s)\Delta} (u^{1}_{s}\cdot\nabla\omega^{1}_{s} -\partial_{1} \gamma^{1}%
_{s})\, \mathrm{d} s - \tilde{Z}_{t},\\
\gamma^{2}_{t} = e^{(\kappa+\nu)t \Delta}\gamma_{0} -\int_{0}^{t}
e^{(\kappa+\nu)(t-s)\Delta} (u^{2}_{s}\cdot\nabla\gamma^{2}_{s})\, \mathrm{d}
s,\\
\omega^{2}_{t} = e^{\kappa t \Delta} \omega_{0}-\int_{0}^{t} e^{\kappa
(t-s)\Delta} (u^{2}_{s}\cdot\nabla\omega^{2}_{s} -\partial_{1} \gamma^{2}%
_{s})\, \mathrm{d} s.
\end{cases}
\end{equation}
Setting $\lambda=\gamma^{1}-\gamma^{2},\, \xi=\omega^{1}-\omega^{2}$, the
differences satisfy the equations
\begin{equation}
\label{eq:diff-sol-boussinesq}%
\begin{cases}
\lambda_{t} = - \int_{0}^{t} e^{(\kappa+\nu)(t-s)\Delta} (u^{1}_{s}\cdot
\nabla\gamma^{1}_{s}-u^{2}_{s}\cdot\nabla\gamma_{s}^{2})\,\mathrm{d} s -
Z_{t}\\
\xi_{t} = -\int_{0}^{t} e^{\kappa(t-s)\Delta} (u^{1}_{s}\cdot\nabla\omega
^{1}_{s}-u^{2}_{s}\cdot\nabla\omega^{2}_{s} -\partial_{1}\lambda_{s})\,
\mathrm{d} s - \tilde{Z}_{t}.
\end{cases}
\end{equation}
With these preparations, we are ready to give an estimate for $(\lambda,\xi)$.

\begin{lemma}
\label{lem:gronwall-bound-boussinesq} Under the above assumptions, for any
$\alpha\in(0,1)$, there exists a deterministic $C=C(\alpha, \nu, T)>0$ such
that $\mathbb{P}$-a.s. it holds
\begin{equation}%
\begin{split}
\label{eq:gronwall-bound-boussinesq}\sup_{t\in[0,T]} \big( \| \lambda
_{t}\|_{H^{-\alpha}}+\| \xi_{t}\|_{H^{-\alpha}}\big)  &  \lesssim
\exp\bigg[\frac{C}{\kappa} \bigg( 1+ \| \gamma_{0}\|_{L^{2}}^{2}+\| \omega
_{0}\|_{L^{2}}^{2} + \frac{\| \omega_{0}\|_{L^{2}}^{2}}{\kappa}
\bigg) \bigg]\\
&  \quad\times\bigg(\sup_{t\in[0,T]}\| Z_{t}\|_{H^{-\alpha}}+\sup_{t\in
[0,T]}\| \tilde Z_{t}\|_{H^{-\alpha}}\bigg).
\end{split}
\end{equation}

\end{lemma}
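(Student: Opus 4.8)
The plan is to follow closely the proof of Theorem \ref{thm-1}, now for the coupled system \eqref{eq:diff-sol-boussinesq} satisfied by $(\lambda,\xi)$. Both components are already in mild form, so I would estimate each Duhamel integral in $H^{-\alpha}$ using the heat-kernel smoothing Lemma \ref{lem:heat-kernel} applied with regularity index $-\alpha-1$; this gains one full derivative and produces a factor $\delta^{-1}$, with $\delta=\kappa+\nu$ for the $\lambda$-equation and $\delta=\kappa$ for the $\xi$-equation. As in Lemma \ref{lem:estim-transport-euler}, the crucial point is to split each transport difference so that the unavoidable loss of a derivative is always charged to the \emph{only} quantity for which an $H^1$-bound is available, namely the deterministic vorticity $\omega^2$; all of $\gamma^1,\omega^1$ (and $\gamma^2$) are used merely through their $L^2$-bounds.

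Concretely, for the temperature I would write $u^1\cdot\nabla\gamma^1-u^2\cdot\nabla\gamma^2=(K\ast\xi)\cdot\nabla\gamma^1+u^2\cdot\nabla\lambda$. The first summand is controlled by Lemma \ref{lem-estimate}(d) (with $\beta=\alpha$) together with the one-derivative gain of the Biot--Savart kernel, giving $\|(K\ast\xi)\cdot\nabla\gamma^1\|_{H^{-\alpha-1}}\lesssim\|\xi\|_{H^{-\alpha}}\|\gamma^1\|_{L^2}$; the second is controlled by Lemma \ref{lem-estimate}(b) applied with exponent $2$ and $\beta=\alpha$ (admissible exactly because $\alpha\in(0,1)$), giving $\|u^2\cdot\nabla\lambda\|_{H^{-\alpha-1}}\lesssim\|u^2\|_{H^2}\|\lambda\|_{H^{-\alpha}}\lesssim\|\omega^2\|_{H^1}\|\lambda\|_{H^{-\alpha}}$. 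For the vorticity the analogous splitting $u^1\cdot\nabla\omega^1-u^2\cdot\nabla\omega^2=(K\ast\xi)\cdot\nabla\omega^1+u^2\cdot\nabla\xi$ yields $\lesssim\|\xi\|_{H^{-\alpha}}\|\omega^1\|_{L^2}+\|\omega^2\|_{H^1}\|\xi\|_{H^{-\alpha}}$, while the buoyancy coupling contributes only $\|\partial_1\lambda\|_{H^{-\alpha-1}}\lesssim\|\lambda\|_{H^{-\alpha}}$.

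Inserting these into Lemma \ref{lem:heat-kernel} and using $\kappa+\nu\geq\kappa$, I would obtain for $\Phi_t:=\|\lambda_t\|_{H^{-\alpha}}^2+\|\xi_t\|_{H^{-\alpha}}^2$ a single closed inequality $\Phi_t\lesssim\frac1\kappa\int_0^t\Phi_s\big(1+\|\gamma^1_s\|_{L^2}^2+\|\omega^1_s\|_{L^2}^2+\|\omega^2_s\|_{H^1}^2\big)\,\d s+\sup_{[0,T]}\|Z\|_{H^{-\alpha}}^2+\sup_{[0,T]}\|\tilde Z\|_{H^{-\alpha}}^2$; note that the cross terms $(K\ast\xi)\cdot\nabla\gamma^1$ and $\partial_1\lambda$ mix the two components, which is why Gronwall must be run on the sum $\Phi$ rather than on $\lambda$ and $\xi$ separately. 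Gronwall's lemma then produces the exponential factor, and the exponent is made explicit via the a priori bounds: the $L^2$-estimates give $\int_0^T(\|\gamma^1_s\|_{L^2}^2+\|\omega^1_s\|_{L^2}^2)\,\d s\lesssim_{\nu,T}\|\gamma_0\|_{L^2}^2+\|\omega_0\|_{L^2}^2$ from \eqref{eq:energy-bounds-boussinesq}, whereas the $H^1$-control of the deterministic vorticity supplies $\int_0^T\|\omega^2_s\|_{H^1}^2\,\d s\lesssim_{\nu,T}\kappa^{-1}(\|\gamma_0\|_{L^2}^2+\|\omega_0\|_{L^2}^2)$. This last, $\kappa^{-1}$-weighted, term is exactly what generates the contribution $\|\omega_0\|_{L^2}^2/\kappa$ inside the exponential. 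The main difficulty I anticipate is not a single hard estimate but the bookkeeping: selecting the decomposition that always routes the derivative loss onto $\omega^2$, and tracking the $\kappa^{-1}$ versus $\kappa^{-2}$ weights so that, after absorbing the harmless $\kappa^{-2}\|\gamma_0\|_{L^2}^2\lesssim\kappa^{-1}\|\gamma_0\|_{L^2}^2$, the exponent collapses to the stated form.
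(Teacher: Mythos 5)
Your proposal is correct and follows essentially the same route as the paper: the same mild formulation, the same application of Lemma \ref{lem:heat-kernel}, the identical splittings $(K\ast\xi)\cdot\nabla\gamma^1+u^2\cdot\nabla\lambda$ and $(K\ast\xi)\cdot\nabla\omega^1+u^2\cdot\nabla\xi$ (the paper packages the latter as a citation of Lemma \ref{lem:estim-transport-euler}), the trivial bound on $\partial_1\lambda$, Gronwall on the sum $\|\lambda_t\|_{H^{-\alpha}}^2+\|\xi_t\|_{H^{-\alpha}}^2$, and the same use of the a priori bounds, with the $\kappa^{-1}$-weighted $H^1$-control of $\omega^2$ producing the $\|\omega_0\|_{L^2}^2/\kappa$ term in the exponent. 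If anything, your bookkeeping of the residual $\kappa^{-2}\|\gamma_0\|_{L^2}^2$ contribution is slightly more explicit than the paper's.
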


\begin{proof}
Reasoning as before, by Lemma \ref{lem:heat-kernel} we have
\begin{equation}\label{lem:gronwall-bound-boussinesq.1}
\aligned
\| \lambda_t\|_{H^{-\alpha}}^2
& \lesssim \frac1{\kappa+\nu} \int_0^t \big\|u^1_s\cdot\nabla\gamma^1_s-u^2_s\cdot \nabla\gamma_s^2 \big\|_{H^{-\alpha-1}}^2\, \d s + \|Z_t\|_{H^{-\alpha}}^2, \\
\|\xi_t \|_{H^{-\alpha}}^2
& \lesssim \frac1\kappa \int_0^t \big\|u^1_s\cdot\nabla\omega^1_s-u^2_s\cdot\nabla\omega^2_s -\partial_1\lambda_s \big\|_{H^{-\alpha-1}}^2\, \d s + \|\tilde Z_t\|_{H^{-\alpha}}^2 .
\endaligned
\end{equation}
For $s\in [0,T]$, define
$$I_1(s) = \big\|u^1_s\cdot\nabla\gamma^1_s-u^2_s\cdot \nabla\gamma_s^2 \big\|_{H^{-\alpha-1}}^2, \quad I_2(s)= \big\|u^1_s\cdot\nabla\omega^1_s-u^2_s\cdot\nabla\omega^2_s -\partial_1\lambda_s \big\|_{H^{-\alpha-1}}^2. $$
Note that $u^1_s-u^2_s = K\ast \xi_s$ and $\gamma^1_s- \gamma_s^2 = \lambda_s$; arguing as in the proof of Lemma \ref{lem:estim-transport-euler}, we can estimate $I_1$ by applying respectively points (d) and (b) of Lemma \ref{lem-estimate} as follows:
\begin{align*}
I_1 (s)
& \lesssim \| (K\ast \xi_s)\cdot\nabla \gamma^1_s\|_{H^{-\alpha-1}}^2 + \| (K\ast \omega^2_s)\cdot\nabla\lambda_s\|_{H^{-\alpha-1}}^2\\
& \lesssim_\alpha \| K\ast \xi_s\|_{H^{1-\alpha}}^2 \|\gamma^1_s\|_{L^2}^2 + \| K\ast\omega^2_s\|_{H^2}^2 \| \lambda_s\|_{H^{-\alpha}}^2\\
& \lesssim \|\gamma^1_s\|_{L^2}^2 \| \xi_s\|_{H^{-\alpha}}^2  + \| \omega^2_s\|_{H^1}^2 \| \lambda_s\|_{H^{-\alpha}}^2.
\end{align*}
For the term $I_2$ we can apply directly Lemma \ref{lem:estim-transport-euler}:
\begin{align*}
I_2(s)
&\lesssim \| u^1_s\cdot\nabla\omega^1_s-u^2_s\cdot\omega^2_s\|_{H^{-\alpha-1}}^2 + \| \partial_1\lambda_s\|_{H^{-\alpha-1}}^2
\lesssim_\alpha \big(\| \omega^1_s\|_{L^2}^2 + \| \omega^2_s\|_{H^1}^2 \big) \|\xi_s\|_{H^{-\alpha}}^2 + \| \lambda_s\|_{H^{-\alpha}}^2.
\end{align*}
Substituting the above estimates into \eqref{lem:gronwall-bound-boussinesq.1}, using $(\kappa+\nu)^{-1}\leq \kappa^{-1}$, we arrive at
\begin{align*}
\| \lambda_t\|_{H^{-\alpha}}^2+\| \xi_t\|_{H^{-\alpha}}^2
\lesssim_\alpha &\, \frac1\kappa \int_0^t \big(1+\|\gamma^1_s\|_{L^2}^2 + \| \omega^1_s\|_{L^2}^2 + \|\omega^2_s\|_{H^1}^2\big) \big(\| \lambda_s\|_{H^{-\alpha}}^2+\| \xi_s\|_{H^{-\alpha}}^2\big )\, \d s\\
& + \big(\|Z_t\|_{H^{-\alpha}}^2 + \|\tilde Z_t\|_{H^{-\alpha}}^2 \big).
\end{align*}
The a priori estimate \eqref{eq:energy-bounds-boussinesq} gives us
\begin{align*}
\| \lambda_t\|_{H^{-\alpha}}^2+\| \xi_t\|_{H^{-\alpha}}^2
\lesssim_\alpha & \, \frac1\kappa \int_0^t \big[ \tilde C_{\nu,T} \big(1+ \|\gamma_0\|_{L^2}^2 + \| \omega_0\|_{L^2}^2 \big) + \|\omega^2_s\|_{H^1}^2\big] \big(\| \lambda_s\|_{H^{-\alpha}}^2+\| \xi_s\|_{H^{-\alpha}}^2\big )\, \d s\\
&+ \big(\|Z_t\|_{H^{-\alpha}}^2 + \|\tilde Z_t\|_{H^{-\alpha}}^2 \big),
\end{align*}
where $\tilde C_{\nu,T}=1+ C_{\nu,T} $. Finally, applying Gronwall's lemma we obtain the conclusion.
\end{proof}

Combining the above result with the maximal estimate in Corollary
\ref{cor-stoch-convol} for stochastic convolution, we immediately get

\begin{proposition}
\label{prop-boussinesq} Assume $\|\theta\|_{\ell^{2}} =1$. For any $\alpha
\in(0,1)$ and $\varepsilon\in(0,\alpha)$, we have
\[%
\begin{split}
\mathbb{E}\bigg[ \sup_{t\in[0,T]} \Big( \| \gamma^{1}_{t}- \gamma^{2}%
_{t}\|_{H^{-\alpha}}+\| \omega^{1}_{t}-\omega^{2}_{t}\|_{H^{-\alpha}%
}\Big) \bigg] \lesssim_{\varepsilon,T}  &  \, \exp\bigg[\frac{C}{\kappa}
\bigg( 1+ \| \gamma_{0}\|_{L^{2}}^{2}+\| \omega_{0}\|_{L^{2}}^{2} + \frac{\|
\omega_{0}\|_{L^{2}}^{2}}{\kappa} \bigg) \bigg]\\
&  \, \times\kappa^{\varepsilon/2} \|\theta\|_{\ell^{\infty}}^{\alpha-
\varepsilon} \big( \|\gamma_{0}\|_{L^{2}} + \|\omega_{0}\|_{L^{2}} \big).
\end{split}
\]

\end{proposition}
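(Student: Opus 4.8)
The plan is to combine the pathwise bound of Lemma \ref{lem:gronwall-bound-boussinesq} with the maximal estimate of Corollary \ref{cor-stoch-convol}, exactly in the spirit of the proof of Theorem \ref{thm-1}; since all the nonlinear analysis has already been absorbed into Lemma \ref{lem:gronwall-bound-boussinesq}, what remains is essentially bookkeeping. First I would take expectation on both sides of \eqref{eq:gronwall-bound-boussinesq}. The exponential prefactor there is a deterministic constant, depending only on $\alpha,\nu,T$ and on $\|\gamma_0\|_{L^2},\|\omega_0\|_{L^2},\kappa$, so it factors out of the expectation and one is left with
\[
\E\bigg[\sup_{t\in[0,T]}\big(\|\lambda_t\|_{H^{-\alpha}}+\|\xi_t\|_{H^{-\alpha}}\big)\bigg] \lesssim \exp[\cdots]\,\E\bigg[\sup_{t\in[0,T]}\|Z_t\|_{H^{-\alpha}}+\sup_{t\in[0,T]}\|\tilde Z_t\|_{H^{-\alpha}}\bigg],
\]
where $\exp[\cdots]$ denotes the same factor appearing in \eqref{eq:gronwall-bound-boussinesq}.

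Next I would bound the two stochastic convolutions separately via Corollary \ref{cor-stoch-convol}, used with $d=2$, $p=1$ and $\beta=\alpha$. For $Z$ the relevant semigroup parameter is $\delta=\kappa+\nu$, and the driving field $\gamma^1$ satisfies $\sup_t\|\gamma^1_t\|_{L^2}\leq\|\gamma_0\|_{L^2}$ by \eqref{eq:energy-bounds-boussinesq}, so Assumption \ref{ass:bddness} holds with $R=\|\gamma_0\|_{L^2}$; for $\tilde Z$ the parameter is $\delta=\kappa$, and the corresponding bound on $\omega^1$ in \eqref{eq:energy-bounds-boussinesq} gives $R\lesssim_{\nu,T}\|\omega_0\|_{L^2}+\|\gamma_0\|_{L^2}$. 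Applying \eqref{eq:estim-Z-interp} then yields
\[
\E\Big[\sup_{t\in[0,T]}\|Z_t\|_{H^{-\alpha}}\Big]\lesssim_{\eps,T}\sqrt{\kappa(\kappa+\nu)^{\eps-1}}\,\|\theta\|_{\ell^\infty}^{\alpha-\eps}\|\gamma_0\|_{L^2},
\]
and the analogous bound for $\tilde Z$ with $(\kappa+\nu)$ replaced by $\kappa$ and $\|\gamma_0\|_{L^2}$ by $\|\omega_0\|_{L^2}+\|\gamma_0\|_{L^2}$.

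The only point needing care is to turn the $\delta$-dependence into the clean factor $\kappa^{\eps/2}$ of the statement. Since $\eps-1<0$, the map $\delta\mapsto\delta^{\eps-1}$ is decreasing, hence from $\kappa+\nu\geq\kappa$ one gets $\sqrt{\kappa(\kappa+\nu)^{\eps-1}}\leq\sqrt{\kappa\cdot\kappa^{\eps-1}}=\kappa^{\eps/2}$, and the same bound holds for the $\tilde Z$ term where $\delta=\kappa$ already. Substituting the two estimates back and absorbing the $\nu,T$-dependent constant from the $\omega^1$ energy bound into $\lesssim_{\eps,T}$ gives the claimed inequality. I do not expect a genuine obstacle: the substantial work lies in the asymmetric product estimates behind Lemma \ref{lem:gronwall-bound-boussinesq} and in the maximal inequality of Corollary \ref{cor-stoch-convol}, while the present step merely assembles them; the one thing to track carefully is the correct pairing of the semigroup parameter $\delta$ with the a priori $L^2$-bound $R$ for each of $Z$ and $\tilde Z$.
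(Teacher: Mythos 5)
Your proposal is correct and follows exactly the route the paper intends: the paper gives no separate proof, stating only that the proposition follows "immediately" by combining Lemma \ref{lem:gronwall-bound-boussinesq} with Corollary \ref{cor-stoch-convol}, and your assembly — taking expectation of the deterministic-prefactor Gronwall bound, applying \eqref{eq:estim-Z-interp} with $\delta=\kappa+\nu$, $R=\|\gamma_0\|_{L^2}$ for $Z$ and $\delta=\kappa$, $R\lesssim_{\nu,T}\|\omega_0\|_{L^2}+\|\gamma_0\|_{L^2}$ for $\tilde Z$, then using $\kappa+\nu\geq\kappa$ and $\eps-1<0$ to reduce to $\kappa^{\eps/2}$ — is precisely that argument. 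The pairing of each semigroup parameter with the correct a priori bound is the only nontrivial bookkeeping, and you have it right.
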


\subsection{mSQG equations}

\label{sec:mSQG} The mSQG (modified Surface Quasi-Geostrophic) equation is an
interpolation between the vorticity form of 2D Euler equation and the SQG
equation, the latter being widely used in meteorological and oceanic flows to
describe the temperature in a rapidly rotating stratified fluid with uniform
potential vorticity (cf. \cite{HPGS}).

For $\beta\in(0,1)$, set $K_{\beta}:= \nabla^{\perp}\cdot(-\Delta
)^{-\frac{1+\beta}{2}}$; note that $K_{1}$ is the Biot-Savart kernel while
$K_{0}$ is the kernel in the SQG equation. We see that the regularizing effect
of $K_{\beta}$ is increasing in $\beta$. The aim of this section is to obtain
rates of convergence for the stochastic mSQG equation
\[
\mathrm{d} \omega+ (K_{\beta}\ast\omega)\cdot\nabla\omega\, \mathrm{d} t +
\circ\mathrm{d} W\cdot\nabla\omega=0
\]
to its deterministic viscous counterpart
\[
\partial_{t} \tilde\omega+ (K_{\beta}\ast\tilde\omega)\cdot\nabla\tilde\omega=
\kappa\Delta\tilde\omega.
\]
Assuming that $\omega_{0}=\tilde\omega_{0}\in L^{2}$, we have the a priori
estimates
\begin{equation}
\label{eq:energy-bounds-mSQG}\sup_{t\in[0,T]} \|\omega_{t}\|_{L^{2}} \leq\|
\omega_{0}\|_{L^{2}} \quad(\mathbb{P}\text{-a.s.}),\quad\sup_{t\in[0,T]}
\bigg\{ \|\tilde\omega_{t} \|_{L^{2}}^{2} + 2\kappa\int_{0}^{t} \|
\nabla\tilde\omega_{s}\|_{L^{2}}^{2} \, \mathrm{d} s \bigg\}\leq\| \omega
_{0}\|_{L^{2}}^{2};
\end{equation}
see respectively Theorem 2.1 and Theorem 4.1 from \cite{LuoSaal}.

As before, writing both equations in mild formulation (after passing to It\^o
form), defining the martingale $M$ and associated stochastic convolution
\[
Z_{t} = \int_{0}^{t} e^{\kappa(t-s)\Delta} \,\mathrm{d} M_{s}= \sqrt{2\kappa}
\int_{0}^{t} \sum_{k} \theta_{k} e^{\kappa(t-s)\Delta} (\sigma_{k} \cdot
\nabla\omega_{s} ) \,\mathrm{d} W^{k}_{s},
\]
we arrive at an equation for the difference $\xi=\omega-\tilde\omega$ of the
form
\begin{equation}
\label{eq:diff-sol-mSQG}\xi_{t} = - \int_{0}^{t} e^{\kappa(t-s)\Delta}
\big[(K_{\beta}\ast\omega_{s})\cdot\nabla\omega_{s}- (K_{\beta}\ast
\tilde\omega_{s}) \cdot\nabla\tilde\omega_{s} \big]\, \mathrm{d} s - Z_{t}.
\end{equation}

Before going into calculations, let us make the following remark. As the
kernel $K_{\beta}$ is not as regularizing as the classical Biot-Savart kernel
$K=K_{1}$, we are not able to prove an estimate of the form
\eqref{eq:estim-transport-euler}; consequently, the strategy employed in
Sections \ref{subsec-proof}-\ref{sec:boussinesq} does not trivially extend to
mSQG. The challenge here is entirely analytic, as the bounds for the
stochastic convolution $Z$ are the same as in previous sections; we must adopt
slightly different estimates.

\begin{proposition}
\label{prop:mSQG} Fix $\delta>0$, $\beta\in(0,1)$, $q>2/\beta$ and $\alpha
\in(0,\beta)$. Then for any $T<\infty$ there exists a constant $C=C(T,\delta
,\alpha,\beta,q)$ such that for any $\kappa\geq\delta$ and any two solutions
$\omega,\tilde{\omega}$ as above it holds
\begin{equation}
\label{eq:gronwall-mSQG-bis}\sup_{t\in[0,T]} \| \omega_{t}-\tilde{\omega}%
_{t}\|_{H^{-\alpha}} \leq C \exp\big(C \| \omega_{0}\|_{L^{2}}^{q}
\big) \sup_{t\in[0,T]}\| Z_{t}\|_{H^{-\alpha}}.
\end{equation}

\end{proposition}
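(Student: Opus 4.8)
The plan is to run the same scheme as in Section \ref{subsec-proof}, i.e. derive a Gronwall-type inequality for $\xi=\omega-\tilde\omega$ from the mild formulation \eqref{eq:diff-sol-mSQG}, but with two modifications dictated by the weaker regularization of $K_\beta$. The first is to exploit the \emph{a priori} bound coming from \eqref{eq:energy-bounds-mSQG}, namely $\sup_{s\in[0,T]}\|\xi_s\|_{L^2}\le 2\|\omega_0\|_{L^2}=:2M$ (so $M=\|\omega_0\|_{L^2}$), which is available for free since both $\omega$ and $\tilde\omega$ are bounded in $L^2$. First I would rewrite the nonlinear difference, using that $K_\beta\ast(\cdot)$ is divergence free, as
\[
(K_\beta\ast\omega)\cdot\nabla\omega-(K_\beta\ast\tilde\omega)\cdot\nabla\tilde\omega = \underbrace{(K_\beta\ast\tilde\omega)\cdot\nabla\xi}_{D_1}+\underbrace{\nabla\cdot[(K_\beta\ast\xi)\,\tilde\omega]}_{D_2}+\underbrace{\nabla\cdot[(K_\beta\ast\xi)\,\xi]}_{D_3},
\]
where I have further split $K_\beta\ast\omega=K_\beta\ast\tilde\omega+K_\beta\ast\xi$ so as to isolate the terms $D_1,D_2$ that are linear in $\xi$ (with a coefficient carrying the good regularity of $\tilde\omega$) from the genuinely quadratic self-interaction $D_3$.

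For the linear terms the estimates are routine: since $K_\beta\ast\tilde\omega\in H^{1+\beta}$ and $K_\beta\ast\xi\in H^{\beta-\alpha}$, Lemma \ref{lem-estimate}(b) gives $\|D_1\|_{H^{-\alpha-1}}\lesssim\|\tilde\omega\|_{H^1}\|\xi\|_{H^{-\alpha}}$, while Lemma \ref{lem-estimate}(c) (with exponent $\beta-\alpha\in(0,1)$) gives $\|D_2\|_{H^{-\alpha-1}}\lesssim\|(K_\beta\ast\xi)\tilde\omega\|_{L^2}\lesssim\|\tilde\omega\|_{H^1}\|\xi\|_{H^{-\alpha}}$; in both cases the coefficient $\|\tilde\omega_s\|_{H^1}$ lies in $L^2(0,T)$ by \eqref{eq:energy-bounds-mSQG}, so these contributions can be treated exactly as in the proof of Theorem \ref{thm-1} through Lemma \ref{lem:heat-kernel}. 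The crux, which I expect to be the main obstacle, is the quadratic term $D_3$: because $\beta<1$, the product $(K_\beta\ast\xi)\,\xi$ cannot be placed in $H^{-\alpha}$ by Sobolev multiplication when $\alpha+\beta<1$, so the naive estimate $\|\xi_t\|_{H^{-\alpha}}^2\lesssim \kappa^{-1}\int_0^t\|D_3\|_{H^{-\alpha-1}}^2\,\mathrm{d}s$ is simply unavailable. The remedy is to use only the mixed product estimate $\|(K_\beta\ast\xi)\,\xi\|_{H^{\beta-\alpha-1-\eta}}\lesssim\|\xi\|_{L^2}\|\xi\|_{H^{-\alpha}}\lesssim M\|\xi\|_{H^{-\alpha}}$, valid for small $\eta>0$ by pairing the $L^2$ bound on one factor with the $H^{-\alpha}$ norm of the other, so that $\|D_3\|_{H^{\beta-\alpha-2-\eta}}\lesssim M\|\xi\|_{H^{-\alpha}}$.

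Since $D_3$ is now controlled in a space that is $1-\beta+\eta$ derivatives \emph{too rough} for the standard one-derivative gain, I would recover the $H^{-\alpha}$ norm by invoking the \emph{fractional} smoothing of Lemma \ref{lem:heat-kernel-estim}(i) with $\rho=2-\beta+\eta$, which is admissible precisely because $\rho<2$; this produces the integrable singular kernel $(t-s)^{-\rho/2}$. Combining this with the $L^2$-in-time estimate for $D_1,D_2$ and squaring, one arrives at a weakly singular Gronwall inequality for $u(t):=\|\xi_t\|_{H^{-\alpha}}^2$ of the form
\[
u(t)\lesssim \|Z_t\|_{H^{-\alpha}}^2 + \int_0^t\Big[\tfrac{C}{\kappa}\|\tilde\omega_s\|_{H^1}^2 + \tfrac{C_T M^2}{\kappa^{2-\beta+\eta}}\,(t-s)^{-(2-\beta+\eta)/2}\Big]u(s)\,\mathrm{d}s ,
\]
where Cauchy–Schwarz has been used to reduce $\big(\int_0^t(t-s)^{-\mu}\|\xi_s\|\,\mathrm{d}s\big)^2$ to $\int_0^t(t-s)^{-\mu}u(s)\,\mathrm{d}s$ with $\mu=(2-\beta+\eta)/2<1$.

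Finally I would close the argument by a generalized (Henry-type) Gronwall lemma for weakly singular kernels. Its $L^1$ part contributes a factor $\exp\big(C\kappa^{-1}\int_0^T\|\tilde\omega_s\|_{H^1}^2\,\mathrm{d}s\big)\lesssim\exp\big(C(T+\kappa^{-1})\kappa^{-1}M^2\big)$, while the singular part contributes $\exp\big(c\,(M^2\kappa^{-(2-\beta+\eta)})^{1/(1-\mu)}\big)$ with $1/(1-\mu)=2/(\beta-\eta)$; taking square roots to pass from $u$ back to $\|\xi_t\|_{H^{-\alpha}}$ yields $\sup_{[0,T]}\|\xi_t\|_{H^{-\alpha}}\lesssim \exp\big(C\,M^{2/(\beta-\eta)}\big)\sup_{[0,T]}\|Z_t\|_{H^{-\alpha}}$. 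Since $q>2/\beta$, choosing $\eta>0$ small enough that $2/(\beta-\eta)\le q$ lets me bound $M^{2/(\beta-\eta)}\le 1+M^q$, while all negative powers of $\kappa$ are absorbed into the constant using $\kappa\ge\delta$; this gives exactly the claimed bound with $C=C(T,\delta,\alpha,\beta,q)$. The main difficulty, and the reason for the unusual exponent $q>2/\beta$, is thus entirely the quadratic self-interaction $D_3$: it forces the use of fractional heat smoothing with exponent $2-\beta$, and the resulting singular Gronwall inequality converts the single power of $M$ in front of the kernel into the super-quadratic factor $M^{2/\beta}$ (up to $\eta$) in the exponential.
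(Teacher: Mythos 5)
Your overall architecture --- mild formulation, fractional heat smoothing with exponent $\rho=2-\beta+\eta$ for the rough part of the nonlinearity, and the one-derivative $L^2$-in-time smoothing of Lemma \ref{lem:heat-kernel} for the part whose convolution coefficient is $\tilde{\omega}$ --- is the same as the paper's, and your three-term decomposition $D_1+D_2+D_3$ is a legitimate variant of the paper's two-term one: the paper keeps $(K_\beta\ast\xi)\cdot\nabla\omega=D_2+D_3$ together and bounds it in $H^{\beta-\alpha-2}$ by Lemma \ref{lem-estimate}(d) using only $\sup_t\|\omega_t\|_{L^2}\leq\|\omega_0\|_{L^2}$, which spares it your separate paraproduct estimate for $D_3$. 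Your individual term estimates are all correct.

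The gap is in how you close the Gronwall argument, and it is quantitative but real. After squaring and applying Cauchy--Schwarz you obtain a singular Gronwall inequality for $u=\|\xi\|_{H^{-\alpha}}^2$ with kernel coefficient $b\sim M^{2}\kappa^{-(2-\beta+\eta)}$ and singularity $\mu=(2-\beta+\eta)/2$. Henry's lemma then gives growth $\exp\big(c\,b^{1/(1-\mu)}\,t\big)$ with $1/(1-\mu)=2/(\beta-\eta)$, that is $\exp\big(c\,M^{4/(\beta-\eta)}\kappa^{-\cdots}t\big)$. Taking the square root to return from $u$ to $\|\xi\|_{H^{-\alpha}}$ only halves the constant $c$; it does not halve the exponent of $M$, since $\sqrt{e^{x}}=e^{x/2}$ and not $e^{\sqrt{x}}$. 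So your argument actually yields $\exp\big(CM^{4/(\beta-\eta)}\big)$, which for $q\in(2/\beta,\,4/\beta]$ is strictly weaker than the claimed $\exp\big(C\|\omega_0\|_{L^2}^{q}\big)$ and cannot be repaired by shrinking $\eta$. The paper avoids this by never passing to the square: it raises the mild-form inequality to the power $q$ and de-singularizes via H\"older with exponent $q$,
\begin{equation*}
\bigg(\int_0^t (t-s)^{-1+\beta/2}\,\|\xi_s\|_{H^{-\alpha}}\,\mathrm{d}s\bigg)^{q}
\leq \bigg(\int_0^t (t-s)^{(-1+\beta/2)q'}\,\mathrm{d}s\bigg)^{q/q'}\int_0^t \|\xi_s\|_{H^{-\alpha}}^{q}\,\mathrm{d}s ,
\end{equation*}
the first factor being finite precisely because $q>2/\beta$. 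The resulting \emph{ordinary} Gronwall inequality for $\|\xi_t\|_{H^{-\alpha}}^{q}$ has coefficient of order $\kappa^{-q+q\beta/2}\|\omega_0\|_{L^2}^{q}+\kappa^{-q+1}\|\omega_0\|_{L^2}^{q-2}\|\tilde{\omega}_s\|_{H^1}^{2}$ and delivers $\exp\big(C\|\omega_0\|_{L^2}^{q}\big)$ directly. Substituting this device for your Cauchy--Schwarz/Henry step (keeping your term-by-term estimates) repairs the proof; alternatively you would have to apply a singular Gronwall lemma to the unsquared quantity $\|\xi_t\|_{H^{-\alpha}}$, which is awkward because the $D_1,D_2$ contributions are naturally controlled in $L^2$ of time.
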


\begin{proof}
It holds
\begin{equation*}\aligned
\xi_t &= - \int_0^t e^{\kappa(t-s)\Delta} [(K_\beta\ast\xi_s)\cdot\nabla\omega_s]\, \d s -\int_0^t e^{\kappa(t-s)\Delta} [(K_\beta\ast\tilde\omega_s)\cdot\nabla\xi_s]\, \d s - Z_t \\
&=: I^1_t + I^2_t - Z_t.
\endaligned
\end{equation*}
Using Lemma \ref{lem:heat-kernel-estim} and Lemma \ref{lem-estimate}(d), we can estimate the first term as follows:
\begin{align*}
\| I^1_t\|_{H^{-\alpha}}
& \leq \int_0^t \big\| e^{\kappa(t-s)\Delta} [(K_\beta\ast\xi_s)\cdot\nabla\omega_s] \big\|_{H^{-\alpha}}\, \d s\\
& \lesssim \kappa^{-1 + \beta/2} \int_0^t |t-s|^{-1 + \beta/2}\, \| (K_\beta\ast\xi_s)\cdot\nabla \omega_s\|_{H^{\beta-\alpha-2}}\, \d s \\
& \lesssim \kappa^{-1 + \beta/2} \int_0^t |t-s|^{-1 + \beta/2}  \| K_\beta \ast\xi_s \|_{H^{\beta-\alpha}} \| \omega_s\|_{L^2}\, \d s\\
& \lesssim \kappa^{-1 + \beta/2}\, \|\omega_0\|_{L^2} \int_0^t |t-s|^{-1+\beta/2}\, \| \xi_s\|_{H^{-\alpha}}\, \d s,
\end{align*}
where the last step follows from the first bound in \eqref{eq:energy-bounds-mSQG} and the regularizing properties of $K_\beta$. By H\"older's inequality ($q'$ is the conjugate number of $q$),
\begin{align*}
\| I^1_t\|_{H^{-\alpha}}
& \lesssim \kappa^{-1 + \beta/2}\, \|\omega_0\|_{L^2} \bigg(\int_0^t |t-s|^{(-1+\beta/2)q'} \d s\bigg)^{1/q'} \bigg(\int_0^t \| \xi_s\|_{H^{-\alpha}}^q \,\d s \bigg)^{1/q}\\
& \lesssim_T \kappa^{-1 + \beta/2}\, \|\omega_0\|_{L^2} \bigg(\int_0^t \| \xi_s\|_{H^{-\alpha}}^q\,\d s \bigg)^{1/q}
\end{align*}
where by the assumption $q>2/\beta$ the integral in the first line is finite.

For the second term we use Lemma \ref{lem:heat-kernel}, together with Lemma \ref{lem-estimate}(b) and the hypothesis $\beta>\alpha$ to obtain
\begin{align*}
\| I^2_t\|_{H^{-\alpha}}
& \lesssim \kappa^{-1/2} \bigg(\int_0^t \|(K_\beta\ast\tilde\omega_s)\cdot \nabla\xi_s \|_{H^{-\alpha-1}}^2\, \d s\bigg)^{1/2}\\
& \lesssim \kappa^{-1/2} \bigg(\int_0^t \| K_\beta\ast\tilde{\omega}_s\|_{H^{1+\beta}}^2\, \| \xi_s\|_{H^{-\alpha}}^2\, \d s\bigg)^{1/2}\\
& \lesssim \kappa^{-1/2} \bigg(\int_0^t \| \tilde{\omega}_s\|_{H^1}^2\, \| \xi_s\|_{H^{-\alpha}}^2\, \d s\bigg)^{1/2}.
\end{align*}
For $q>2$, by H\"older's inequality,
\begin{align*}
\| I^2_t\|_{H^{-\alpha}}
& \lesssim \kappa^{-1/2} \bigg( \int_0^t \| \tilde{\omega}_s\|_{H^1}^2 \, \d s\bigg)^{\frac{q-2}{2q}}\, \bigg( \int_0^t \| \tilde{\omega}_s \|_{H^1}^2 \| \xi_s\|_{H^{-\alpha}}^q\, \d s\bigg)^{1/q}\\
& \lesssim \kappa^{-1+ \frac1{q}} \| \omega_0\|_{L^2}^{1-\frac{2}{q}} \bigg( \int_0^t \| \tilde{\omega}_s\|_{H^1}^2 \| \xi_s\|_{H^{-\alpha}}^q\, \d s\bigg)^{1/q}.
\end{align*}
Combining the above estimates we obtain
\begin{align*}
\| \xi_t\|_{H^{-\alpha}}^q
& \lesssim \| I^1_t\|_{H^{-\alpha}}^q + \| I^2_t \|_{H^{-\alpha}}^q + \|Z_t\|_{H^{-\alpha}}^q\\
& \lesssim \int_0^t \Big( \kappa^{-q + q\beta/2} \|\omega_0\|_{L^2}^q + \kappa^{-q+1} \|\omega_0\|_{L^2}^{q-2} \| \tilde{\omega}_s\|_{H^1}^2 \Big) \| \xi_s\|_{H^{-\alpha}}^q\, \d s + \| Z_t\|_{H^{-\alpha}}^q.
\end{align*}
By Gronwall's lemma and the second bound in \eqref{eq:energy-bounds-mSQG}, using the assumption $\kappa\geq \delta$, we find $C=C(T,\delta,\alpha,\beta,q)>0$ such that
\begin{equation*}
\sup_{t\in [0,T]} \| \xi_t\|_{H^{-\alpha}}^q
\lesssim \exp\Big[ C(1+T)\| \omega_0\|_{L^2}^q \Big] \sup_{t\in [0,T]} \| Z_t\|_{H^{-\alpha}}^q;
\end{equation*}
up to relabelling $C$, the conclusion follows.
\end{proof}

Applying the maximal estimate for stochastic convolution, we obtain

\begin{corollary}
Consider parameters $\delta, \beta, q,\alpha,T$ as above, $C$ be the constant
from Proposition \ref{prop:mSQG}; then for any $p\in[1,\infty)$, $\kappa
\geq\delta$ and any $\varepsilon\in(0,\alpha]$, we have
\[
\mathbb{E}\bigg[\sup_{t\in[0,T]} \| \omega_{t}-\tilde{\omega}_{t}%
\|_{H^{-\alpha}}^{p} \bigg]^{1/p} \lesssim_{\varepsilon, p,T} \kappa
^{\varepsilon/2} \|\theta\|_{\ell^{\infty}}^{\alpha- \varepsilon}\, C
\|\omega_{0}\|_{L^{2}} \exp\big(C \| \omega_{0}\|_{L^{2}}^{q} \big).
\]

\end{corollary}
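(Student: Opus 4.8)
The plan is to combine the pathwise bound just established in Proposition \ref{prop:mSQG} with the maximal estimate for the stochastic convolution from Corollary \ref{cor-stoch-convol}. Since both ingredients are already in hand, the corollary follows with essentially no further analysis: the substantive work has been done upstream.

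First I would raise the pathwise inequality \eqref{eq:gronwall-mSQG-bis} to the power $p$ and take expectations. The prefactor $C\exp(C\|\omega_0\|_{L^2}^q)$ is deterministic, depending only on the fixed parameters $\delta,\beta,q,\alpha,T$ and on the initial datum $\omega_0\in L^2$, so it pulls out of the expectation, leaving
\[
\mathbb{E}\bigg[\sup_{t\in[0,T]} \|\omega_t-\tilde{\omega}_t\|_{H^{-\alpha}}^p\bigg]^{1/p} \leq C\exp\big(C\|\omega_0\|_{L^2}^q\big)\, \mathbb{E}\bigg[\sup_{t\in[0,T]} \|Z_t\|_{H^{-\alpha}}^p\bigg]^{1/p}.
\]

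Next I would estimate the remaining stochastic-convolution term. The process $Z$ appearing in \eqref{eq:diff-sol-mSQG} is exactly of the form \eqref{eq:stoch-conv} with diffusion parameter $\delta=\kappa$, and the driving process $\omega$ satisfies Assumption \ref{ass:bddness} with $R=\|\omega_0\|_{L^2}$ thanks to the first a priori bound in \eqref{eq:energy-bounds-mSQG}. Since $\|\theta\|_{\ell^2}=1$, $d=2$ and $\alpha\in(0,\beta)\subset(0,1)$, I may apply estimate \eqref{eq:estim-Z-interp} of Corollary \ref{cor-stoch-convol}, in which the negative Sobolev index (there denoted $\beta$) is taken equal to the present $\alpha$, together with the given $\varepsilon\in(0,\alpha]$; recalling $\delta=\kappa$, this yields
\[
\mathbb{E}\bigg[\sup_{t\in[0,T]} \|Z_t\|_{H^{-\alpha}}^p\bigg]^{1/p} \lesssim_{\varepsilon,p,T} \sqrt{\kappa\,\kappa^{\varepsilon-1}}\,\|\theta\|_{\ell^\infty}^{\alpha-\varepsilon}\,\|\omega_0\|_{L^2} = \kappa^{\varepsilon/2}\,\|\theta\|_{\ell^\infty}^{\alpha-\varepsilon}\,\|\omega_0\|_{L^2}.
\]
Substituting this into the previous display gives precisely the claimed inequality, with the constant from Corollary \ref{cor-stoch-convol} absorbed into $\lesssim_{\varepsilon,p,T}$ and $C$ still denoting the constant from Proposition \ref{prop:mSQG}.

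I do not anticipate any genuine obstacle in this final step: the hard analysis lives in Proposition \ref{prop:mSQG}, whose Gronwall argument must accommodate the only weakly regularizing kernel $K_\beta$ through the splitting $\xi=I^1+I^2-Z$ and a careful H\"older balance in the exponent $q>2/\beta$, and in Corollary \ref{cor-stoch-convol}, which interpolates the two convolution bounds \eqref{eq:estim-Z-ell2}--\eqref{eq:estim-Z-ellinfty}. The points that still require a moment's care are purely bookkeeping: verifying that the hypotheses of Corollary \ref{cor-stoch-convol} are met, namely $\|\theta\|_{\ell^2}=1$, $\alpha\le d/2=1$ and $\varepsilon\le\alpha$, and confirming that $R=\|\omega_0\|_{L^2}$ is indeed the correct uniform $L^2$-bound for $\omega$, which is exactly the first estimate in \eqref{eq:energy-bounds-mSQG}.
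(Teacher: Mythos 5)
Your proposal is correct and coincides with the paper's (implicit) argument: the corollary is obtained exactly by taking $p$-th moments in the pathwise bound \eqref{eq:gronwall-mSQG-bis} and applying \eqref{eq:estim-Z-interp} of Corollary \ref{cor-stoch-convol} with Sobolev index $\alpha$, heat-semigroup parameter equal to $\kappa$, and $R=\|\omega_0\|_{L^2}$ from \eqref{eq:energy-bounds-mSQG}. Your bookkeeping of the hypotheses (including the distinction between the $\delta$ of Proposition \ref{prop:mSQG} and the semigroup parameter in Corollary \ref{cor-stoch-convol}) is accurate.
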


\section{Blow-up probability estimates}\label{sec:blow-up}

The purpose of this section is to prove Theorem \ref{thm-3}. To this end, we
first make some necessary preparations in Section \ref{subsec-KS-preparations}%
, and then provide the proof in Section \ref{subsec-KS-proof}, following the
main idea in the previous sections.

\subsection{Preliminaries on the Keller-Segel system}

\label{subsec-KS-preparations}

Let us start by reformulating system \eqref{stoch-keller-segel} in a way which
is more suited for our purposes. For any $f\in L^{2}(\mathbb{T}^{2})$ we
define the operator $\nabla^{-1} f= \nabla(-\Delta)^{-1} (f-f_{\mathbb{T}^{2}%
})$; $\nabla^{-1}$ extends to a continuous linear operator from $H^{s}$ to
$H^{s+1}$ for any $s\in\mathbb{R}$ and satisfies $\nabla\cdot\nabla^{-1} f =
-f +f_{\mathbb{T}^{2}}$, $(\nabla^{-1} f)_{\mathbb{T}^{2}}=0$ for regular $f$.
With this notation, system \eqref{stoch-keller-segel} can be written in a more
compact form:
\[
\mathrm{d} \rho= \big(\Delta\rho- \nabla\cdot[\rho\nabla^{-1}\rho]
\big)\,\mathrm{d} t + \circ\mathrm{d} W\cdot\nabla\rho.
\]
Observe that if $\rho$ satisfies the SPDE, then it has constant mean
$\rho_{\mathbb{T}^{2}}(t) = \rho_{\mathbb{T}^{2}}(0) =: \bar\rho>0$, since $W$
is spatially divergence free. Defining $u=\rho-\bar{\rho}$ and using the
properties of $\nabla^{-1}$, we obtain
\begin{equation}
\label{eq:stoch-KSE}\mathrm{d} u = \big(\Delta u -\nabla\cdot[u\nabla^{-1}
u]+\bar\rho u \big)\,\mathrm{d} t + \circ\mathrm{d} W\cdot\nabla u.
\end{equation}
Finally, this equation has the following equivalent It\^o form
\begin{equation}
\label{eq:stoch-keller-segel-new}\mathrm{d} u = \big((1+\kappa)\Delta u
-\nabla\cdot[u\nabla^{-1} u]+\bar\rho u \big)\,\mathrm{d} t + \mathrm{d}
W\cdot\nabla u.
\end{equation}
Similarly, if we start from the deterministic system \eqref{keller-segel} with
$\chi=1$ and $(1+\kappa)\Delta$ in place of $\Delta$, we would have
$\rho=\tilde u+\bar{\rho}$ with
\begin{equation}
\label{eq:keller-segel-enhanced-new}\partial_{t} \tilde u = (1+\kappa
)\Delta\tilde u + \bar{\rho} \tilde u - \nabla\cdot[\tilde u\nabla^{-1} \tilde
u].
\end{equation}
The advantage in dealing with $u$ in place of $\rho$ lies in the fact that
$\rho$ blows up if and only if $u$ does, but $u_{\mathbb{T}^{2}}=0$, allowing
easy use of Poincar\'e inequality. However, keep in mind that $\rho_{0}$
encodes the pair $(\bar{\rho},u)$ of data of the problem; also observe that
$\|u_{0}\|_{L^{2}}^{2} + \bar{\rho}^{2} = \|\rho_{0}\|_{L^{2}}^{2}$.

Let us quickly explain the main idea involving estimates on blow-up: we expect
equation \eqref{eq:stoch-keller-segel-new} to be close to
\eqref{eq:keller-segel-enhanced-new} in the scaling limit, at least in some
weak norm $H^{-\alpha}$. Therefore, blow-up can be delayed if we can show
that: i) the solution $\tilde{u}$ to \eqref{eq:keller-segel-enhanced-new}
exists globally; ii) blow-up for \eqref{eq:stoch-keller-segel-new} in strong
norms only takes place if $\| u\|_{H^{-\alpha}}$ blows up.

Below we verify that both requirements are met.

\begin{lemma}
\label{lem:basin-KS-new} There exists $C>0$ with the following property: given
$\rho_{0}\in L^{2}$, if $\kappa\geq C \|\rho_{0}\|_{L^{2}}^{2} + 1$, then
global existence holds for \eqref{eq:keller-segel-enhanced-new} and moreover
the solution satisfies
\begin{equation}
\label{eq:basin-KS-new}\max\bigg\{ \sup_{t\geq0} \| \rho_{t}\|_{L^{2}}^{2},
\int_{0}^{+\infty} \|\nabla\rho_{t}\|_{L^{2}}^{2}\, \mathrm{d} t\bigg\} \leq
\|\rho_{0}\|_{L^{2}}^{2}.
\end{equation}

\end{lemma}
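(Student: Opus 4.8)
The plan is to work with the zero-mean unknown $\tilde{u} = \rho - \bar{\rho}$ solving \eqref{eq:keller-segel-enhanced-new}, rather than with $\rho$ directly. Since $\|\rho_t\|_{L^2}^2 = \|\tilde{u}_t\|_{L^2}^2 + \bar{\rho}^2$, $\nabla \rho_t = \nabla \tilde{u}_t$ and $\|u_0\|_{L^2}^2 + \bar{\rho}^2 = \|\rho_0\|_{L^2}^2$, the claimed bound \eqref{eq:basin-KS-new} is equivalent to showing that $y(t) := \|\tilde{u}_t\|_{L^2}^2$ is non-increasing, together with $\int_0^{+\infty} \|\nabla \tilde{u}_t\|_{L^2}^2 \, \d t \leq \|\rho_0\|_{L^2}^2$. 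First I would test \eqref{eq:keller-segel-enhanced-new} against $\tilde{u}$ to obtain the energy balance
\[
\frac12 \frac{\d}{\d t} \|\tilde{u}_t\|_{L^2}^2 + (1+\kappa) \|\nabla \tilde{u}_t\|_{L^2}^2 = \bar{\rho}\, \|\tilde{u}_t\|_{L^2}^2 + \frac12 \int_{\T^2} \tilde{u}_t^3 \, \d x,
\]
where the cubic term arises from integrating the nonlinearity by parts and using the identity $\nabla \cdot \nabla^{-1} \tilde{u} = -\tilde{u}$, which holds precisely because $\tilde{u}$ has zero mean.

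The next step is to absorb the two terms on the right into the dissipation. For the linear term I would use the Poincaré inequality $\|\tilde{u}\|_{L^2}^2 \leq C_P \|\nabla \tilde{u}\|_{L^2}^2$, giving $\bar{\rho}\|\tilde{u}\|_{L^2}^2 \leq \bar{\rho} C_P \|\nabla \tilde{u}\|_{L^2}^2$; for the cubic term I would invoke the two-dimensional Gagliardo--Nirenberg inequality $\|\tilde{u}\|_{L^3}^3 \lesssim \|\tilde{u}\|_{L^2}^2 \|\nabla \tilde{u}\|_{L^2}$ followed by Poincaré on one factor, so that $\frac12\int \tilde{u}^3 \lesssim \|\tilde{u}\|_{L^2}\, \|\nabla \tilde{u}\|_{L^2}^2$. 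Both controls thus produce a factor $\|\nabla \tilde{u}\|_{L^2}^2$ multiplied by a coefficient growing like $\bar{\rho} + \|\tilde{u}\|_{L^2} \lesssim \|\rho_0\|_{L^2} + \sqrt{y}$. The point is that when $\kappa$ is large this coefficient is dominated by $(1+\kappa)$, leaving a net negative contribution. Concretely, I would run a continuity (bootstrap) argument: as long as $y(t) \leq y(0) \leq \|\rho_0\|_{L^2}^2$, the right-hand side is bounded by $C'\|\rho_0\|_{L^2}\|\nabla \tilde{u}\|_{L^2}^2$ for an explicit $C' = C'(C_P,C_{GN})$, and choosing $C$ in the hypothesis $\kappa \geq C\|\rho_0\|_{L^2}^2 + 1$ large enough (using $\|\rho_0\|_{L^2} \leq \frac12(1 + \|\rho_0\|_{L^2}^2)$) ensures $C'\|\rho_0\|_{L^2} \leq \kappa + \frac12$, hence
\[
\frac12 \frac{\d}{\d t} \|\tilde{u}_t\|_{L^2}^2 + \frac12 \|\nabla \tilde{u}_t\|_{L^2}^2 \leq 0 .
\]
This forces $y$ to decrease, so the condition $y(t)\leq y(0)$ is self-maintaining and holds for all $t\geq 0$.

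Finally, the non-increasing property of $y$ yields $\sup_{t\geq 0}\|\tilde{u}_t\|_{L^2}^2 \leq \|u_0\|_{L^2}^2$, while integrating the last displayed inequality over $[0,+\infty)$ gives $\int_0^{+\infty}\|\nabla \tilde{u}_t\|_{L^2}^2\,\d t \leq y(0) \leq \|\rho_0\|_{L^2}^2$; translating back through $\|\rho_t\|_{L^2}^2 = \|\tilde{u}_t\|_{L^2}^2 + \bar{\rho}^2$ delivers exactly \eqref{eq:basin-KS-new}. The uniform $L^2$ control then upgrades the local solution (which exists in $L^2$ by standard semilinear parabolic theory, as in \cite{FGL2}) to a global one, since $L^2$ is a continuation space for the two-dimensional Keller--Segel system. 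I expect the main obstacle to be the critical scaling of the cubic nonlinearity in dimension two: the Gagliardo--Nirenberg bound on $\int \tilde{u}^3$ is of exactly the same order as the dissipation $\|\nabla \tilde{u}\|_{L^2}^2$, so absorbing it is only possible because the enhanced viscosity $(1+\kappa)$ is taken large and the bootstrap keeps $\|\tilde{u}_t\|_{L^2}$ controlled; tuning the constants so that both final bounds come out with the sharp factor $\|\rho_0\|_{L^2}^2$, rather than a larger multiple, is the delicate bookkeeping point.
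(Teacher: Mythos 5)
Your proposal is correct and follows essentially the same route as the paper: the $L^2$ energy balance for the zero-mean unknown, the Gagliardo--Nirenberg bound $\|u\|_{L^3}^3\lesssim \|u\|_{L^2}^2\|\nabla u\|_{L^2}$ combined with Poincar\'e, a choice of $C$ dictated by these constants, and a self-maintaining monotonicity (bootstrap) argument, followed by integration of the resulting differential inequality. The only cosmetic difference is that you absorb the lower-order terms into the dissipation $\|\nabla u\|_{L^2}^2$, whereas the paper converts part of the dissipation via Poincar\'e into a negative multiple of $\|u\|_{L^2}^2$ (yielding the explicit decay $\|u_t\|_{L^2}^2\le e^{-t}\|u_0\|_{L^2}^2$); both give the stated bounds.
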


\begin{proof}
The energy balance for \eqref{eq:keller-segel-enhanced-new} can be computed as follows:
\begin{align*}
\frac{\d}{\d t} \|u\|_{L^2}^2 + 2(1+\kappa)\|\nabla u\|_{L^2}^2
= 2\bar{\rho}\, \|u\|_{L^2}^2 + 2 \langle u\nabla u, \nabla^{-1}u \rangle
= 2\bar{\rho}\, \|u\|_{L^2}^2 + \|u\|_{L^3}^3.
\end{align*}
By Sobolev embedding, interpolation and Young's inequality we have
\[
\|u\|_{L^3}^3 \lesssim \|u\|_{H^{1/3}}^3 \lesssim \| u\|_{L^2}^2 \|\nabla u\|_{L^2} \leq \|\nabla u\|_{L^2}^2 + c \|u\|_{L^2}^4
\]
for some constant $c>0$. By the Poincar\'e inequality $\|\nabla u\|_{L^2}^2 \geq 4\pi^2 \|u\|_{L^2}^2$ we deduce that
\begin{equation}\label{KS-energy-balance}
\frac{\d}{\d t} \|u\|_{L^2}^2 + \|\nabla u\|_{L^2}^2 \leq -\big(8\pi^2 \kappa - 2\bar{\rho} - c \|u\|_{L^2}^2 \big) \|u\|_{L^2}^2.
\end{equation}
We claim that the constant $C$ in the statement can be chosen as
\[
C = \frac{c+1}{8\pi^2}
\]
where $c$ is the constant appearing above. Indeed, if $\kappa\geq C \|\rho_0\|_{L^2}^2+1$, then
\begin{align*}
8\pi^2 \kappa - 2 \bar{\rho}-c\| u_0\|_{L^2}^2
\geq 8\pi^2 \kappa - 2 \|\rho_0\|_{L^2} - c \| \rho_0\|_{L^2}^2
\geq 8\pi^2 \kappa -  (c+1) \| \rho_0\|_{L^2}^2 - 1 \geq 1.
\end{align*}
This implies that $\frac{\d}{\d t} \|u_t\|_{L^2}^2 < 0$ at the initial time $t=0$, so the energy is decreasing, enforcing the fact that $8 \pi^2 \kappa - 2 \bar{\rho}- c \|u\|_{L^2}^2 \geq 1$ will also be true at subsequent times and so that
\[
\frac{\d }{\d t} \| u\|_{L^2}^2 \leq - \| u\|_{L^2}^2 \quad \forall\, t\geq 0.
\]
As a consequence $\| u_t\|_{L^2}^2 \leq e^{-t} \| u_0\|_{L^2}^2$, which together with the energy balance \eqref{KS-energy-balance} also implies
\[
\int_0^{+\infty} \|\nabla u_t\|_{L^2}^2\, \d t \leq \|u_0\|_{L^2}^2.
\]
The conclusion readily follows from the relations $\nabla u_t = \nabla \rho_t$ and $\|\rho_t\|_{L^2}^2= \| u_t\|_{L^2}^2 + \bar{\rho}^2$.
\end{proof}

Given $\alpha>0$ to be chosen later, in order to show that $u$ blows up only
if $\|u\|_{H^{-\alpha}}$ does so, we turn to study the following modified
version of \eqref{eq:stoch-KSE}:
\begin{equation}
\label{eq:keller-segel-cutoff-new}\mathrm{d} u = \big\{ \Delta u + \bar{\rho
}\, u - g_{\alpha,R}(u) \nabla\cdot[u\nabla^{-1} u] \big\}\, \mathrm{d} t +
\circ\mathrm{d} W\cdot\nabla u.
\end{equation}
Here $g_{\alpha,R}(u):= g_{R}(\| u\|_{H^{-\alpha}})$ is a cutoff function,
where $g_{R}\in C([0,+\infty);[0,1])$ satisfies $g_{R}\equiv1$ on $[0,R]$,
$g_{R}\equiv0$ on $[R+1,+\infty)$ and is Lipschitz with constant $1$.

\begin{lemma}
\label{lem:keller-segel-cutoff-new} Let $\alpha\in(0,1)$ and $R>0$ be fixed.
Then global existence of solutions holds for
\eqref{eq:keller-segel-cutoff-new} for any initial data $\rho_{0}\in L^{2}$.
Furthermore, there exists a constant $C_{\alpha}$ such that the unique
solution $u$ satisfies
\begin{equation}
\label{eq:estim-keller-segel-cutoff-new}\sup_{t\in[0,T]} \| u_{t}\|_{L^{2}%
}^{2} \leq e^{2\bar{\rho}\, T} \bigg(\| u_{0}\|_{L^{2}}^{2} + \frac{C_{\alpha
}}{\bar\rho}\, (R+1)^{\frac{4}{1-\alpha}}\bigg)\quad\mathbb{P}\mbox{-a.s.}
\end{equation}

\end{lemma}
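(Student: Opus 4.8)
The plan is to establish the a priori bound \eqref{eq:estim-keller-segel-cutoff-new} via an energy estimate and then read off global existence from it. For the existence and uniqueness of a local maximal solution $u$ to \eqref{eq:keller-segel-cutoff-new}, I would invoke the same local well-posedness theory used for the uncut equation \eqref{eq:stoch-KSE} in \cite{FGL2}: the extra factor $g_{\alpha,R}(u)=g_R(\|u\|_{H^{-\alpha}})$ is Lipschitz in $u$ and does not spoil the local Lipschitz structure of the nonlinearity, so one obtains a unique maximal solution with blow-up time $\tau$ characterised by $\|u_t\|_{L^2}\to\infty$ as $t\uparrow\tau$ on $\{\tau<\infty\}$. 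Note that the cutoff acts only on the weak norm $\|u\|_{H^{-\alpha}}\leq\|u\|_{L^2}$ and therefore does not by itself bound the quadratic nonlinearity in $L^2$; the global control must come from the energy balance.

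To obtain that balance I would work with the Stratonovich formulation \eqref{eq:keller-segel-cutoff-new} and exploit that $W$ is divergence free. Applying the Stratonovich chain rule to $\|u_t\|_{L^2}^2$ (justified rigorously through Galerkin approximation / the variational framework of \cite{FGL2}), the transport-noise term contributes nothing \emph{pathwise}, since $\langle u,\circ\,\d W\cdot\nabla u\rangle=-\tfrac12\langle\nabla\cdot(\circ\,\d W),u^2\rangle=0$; consequently $\|u_t\|_{L^2}^2$ obeys a genuine random ODE with no martingale part, which is what allows a $\P$-a.s. conclusion. The linear term yields $2\bar\rho\|u\|_{L^2}^2$ and the Laplacian $-2\|\nabla u\|_{L^2}^2$; for the nonlinear term, integrating by parts twice and using $\nabla\cdot\nabla^{-1}u=-u$ (recall $u_t$ has zero mean, which is preserved by the equation) gives $-2\langle u,\nabla\cdot[u\nabla^{-1}u]\rangle=\int_{\T^2}u^3\,\d x$, exactly as in Lemma \ref{lem:basin-KS-new}. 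Hence $\P$-a.s., for a.e.\ $t$,
\begin{equation*}
\frac{\d}{\d t}\|u_t\|_{L^2}^2 = -2\|\nabla u_t\|_{L^2}^2 + 2\bar\rho\,\|u_t\|_{L^2}^2 + g_{\alpha,R}(u_t)\int_{\T^2}u_t^3\,\d x.
\end{equation*}

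The crux is to absorb the cubic term into the dissipation at the cost of a constant depending on $R$. Using $g_{\alpha,R}(u)\int u^3\leq|\int u^3|\leq\|u\|_{L^3}^3$ together with the fact that $g_{\alpha,R}(u)\neq0$ forces $\|u\|_{H^{-\alpha}}\leq R+1$, I would apply the two-dimensional Sobolev embedding $\|u\|_{L^3}\lesssim\|u\|_{H^{1/3}}$ followed by the interpolation inequality $\|u\|_{H^{1/3}}\lesssim\|u\|_{H^1}^{\theta}\|u\|_{H^{-\alpha}}^{1-\theta}$ with $\theta=(1/3+\alpha)/(1+\alpha)$. Since $u$ is mean zero, Poincaré gives $\|u\|_{H^1}\sim\|\nabla u\|_{L^2}$, so on $\{g_{\alpha,R}(u)>0\}$ one obtains $\|u\|_{L^3}^3\lesssim\|\nabla u\|_{L^2}^{3\theta}(R+1)^{3(1-\theta)}$. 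The exponent $3\theta=(1+3\alpha)/(1+\alpha)$ is strictly below $2$ precisely because $\alpha<1$, so Young's inequality absorbs a $\|\nabla u\|_{L^2}^2$ term and leaves a remainder whose power of $(R+1)$ equals $6(1-\theta)/(2-3\theta)=4/(1-\alpha)$, matching the statement exactly. This exponent bookkeeping—which is what pins the hypothesis $\alpha\in(0,1)$ to the stated result—is the step I expect to require the most care, although it is ultimately routine.

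Collecting terms, the gradient contributions cancel favourably (the $-2\|\nabla u\|_{L^2}^2$ against the absorbed $\|\nabla u\|_{L^2}^2$), leaving
\begin{equation*}
\frac{\d}{\d t}\|u_t\|_{L^2}^2 \leq 2\bar\rho\,\|u_t\|_{L^2}^2 + C_\alpha\,(R+1)^{\frac{4}{1-\alpha}}.
\end{equation*}
Gronwall's lemma (equivalently, integrating the comparison ODE $y'\leq2\bar\rho\,y+b$) then yields $\|u_t\|_{L^2}^2\leq e^{2\bar\rho t}\big(\|u_0\|_{L^2}^2+\tfrac{C_\alpha}{2\bar\rho}(R+1)^{4/(1-\alpha)}\big)$, which is \eqref{eq:estim-keller-segel-cutoff-new} after relabelling $C_\alpha$ and bounding $e^{2\bar\rho t}\leq e^{2\bar\rho T}$ on $[0,T]$. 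Finally, since this right-hand side is finite and uniform on $[0,T]$, the $L^2$ norm cannot blow up in finite time; by the blow-up criterion recalled above this forces $\tau=\infty$, so the solution is global and the proof is complete.
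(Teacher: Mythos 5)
Your proof is correct and follows essentially the same route as the paper: the same energy balance, the same chain $\|u\|_{L^3}^3\lesssim\|u\|_{H^{1/3}}^3\lesssim\|\nabla u\|_{L^2}^{(1+3\alpha)/(1+\alpha)}\|u\|_{H^{-\alpha}}^{2/(1+\alpha)}$, Young's inequality with the same exponent bookkeeping yielding $(R+1)^{4/(1-\alpha)}$, and Gronwall. The only cosmetic difference is that the paper simply cites \cite[Proposition 3.6]{FGL2} for global existence and uniqueness rather than rederiving it from the a priori bound via a blow-up criterion as you do.
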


\begin{proof}
Global existence and uniqueness of solutions follows from \cite[Proposition 3.6]{FGL2}, so we only need to focus on the proof of \eqref{eq:estim-keller-segel-cutoff-new}. Due to the divergence free property and Stratonovich structure of the noise, the energy balance is given by
\[
\frac{\d}{\d t} \|u\|_{L^2}^2 + 2\| \nabla u\|_{L^2}^2 = 2\bar{\rho}\, \|u\|_{L^2}^2 + g_{\alpha,R}(u) \| u\|_{L^3}^3.
\]
As before, we can estimate the last term by Sobolev embedding and interpolation, only replacing $\|\cdot\|_{L^2}$ with $\| \cdot\|_{H^{-\alpha}}$:
\begin{align*}
g_{\alpha,R}(u) \| u\|_{L^3}^3
& \lesssim g_{\alpha,R}(u) \| u\|_{H^{1/3}}^3
\lesssim_\alpha g_{\alpha,R}(u) \| \nabla u\|_{L^2}^{\frac{1+3\alpha}{1+\alpha}} \| u\|_{H^{-\alpha}}^{\frac{2}{1+\alpha}}\\
& \leq \|\nabla u\|_{L^2}^2 + C_\alpha\, g_{\alpha,R} (u)^{\frac{2+2\alpha}{1-\alpha}} \| u\|_{H^{-\alpha}}^{\frac{4}{1-\alpha}},
\end{align*}
where in the last passage Young's inequality is allowed under the condition $(1+3\alpha)/(1+\alpha)<2$, which holds since $\alpha<1$. Together with the properties of $g_{\alpha,R}$, this gives the estimate
\[
\frac{\d}{\d t} \|u\|_{L^2}^2 + \| \nabla u\|_{L^2}^2 \leq 2\bar{\rho}\, \|u\|_{L^2}^2 + C_\alpha\, (R+1)^{\frac{4}{1-\alpha}}
\]
and the conclusion follows from Gronwall's lemma.
\end{proof}

\begin{remark}
\label{rem:keller-segel} If $u$ solves \eqref{eq:keller-segel-cutoff-new} on
$[0,T]$ and satisfies $\sup_{t\in[0,T]} \| u_{t}\|_{H^{-\alpha}} \leq R$, then
it also solves the equation \eqref{eq:stoch-KSE} without cut-off. Due to the
freedom in choosing $R$, this shows that $u$ blows up if and only if
$\|u\|_{H^{-\alpha}}$ does. A similar reasoning applies if we consider
equation \eqref{eq:keller-segel-enhanced-new} with cut-off $g_{\alpha
,R}(\tilde{u})$ in front of the nonlinearity; in particular, if $\kappa$ is
chosen to be a function of $\rho_{0}$ as in Lemma \ref{lem:basin-KS-new} and
$R\geq\| \rho_{0}\|_{L^{2}}$, then the solutions to
\eqref{eq:keller-segel-enhanced-new} and to the PDE with cut-off coincide.
\end{remark}

We conclude this section with an analytical lemma.

\begin{lemma}
\label{lem:estim-transport-keller} Let $R>0$, $\alpha\in(0,1)$ and
$g_{\alpha,R}$ be as above; set $F(u) := g_{\alpha,R}(u) \nabla\cdot
[u\nabla^{-1} u]$. Then we have
\begin{equation}
\label{eq:estim-transport-keller}\| F(u) - F(\tilde{u})\|_{H^{-\alpha-1}}
\lesssim_{\alpha}\| u-\tilde{u}\|_{H^{-\alpha}} \big(1+\|u\|_{L^{2}}^{2} + \|
\tilde{u}\|_{H^{1}} \big) \quad\forall\, u\in L^{2}, \tilde{u}\in H^{1}.
\end{equation}

\end{lemma}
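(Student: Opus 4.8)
The plan is to mimic the strategy behind Lemma~\ref{lem:estim-transport-euler}, exploiting the divergence structure of the nonlinearity together with the one-derivative smoothing of $\nabla^{-1}$, and to peel off the cutoff factor first. Writing $w:=u-\tilde u$, I would decompose
\[
F(u)-F(\tilde u)=\big[g_{\alpha,R}(u)-g_{\alpha,R}(\tilde u)\big]\,\nabla\cdot[u\nabla^{-1}u]+g_{\alpha,R}(\tilde u)\,\nabla\cdot\big[u\nabla^{-1}u-\tilde u\nabla^{-1}\tilde u\big]=:A+B.
\]
The two elementary facts used throughout are: (i) for any vector field $V$ one has $\|\nabla\cdot V\|_{H^{-\alpha-1}}\lesssim\|V\|_{H^{-\alpha}}$ (the initial observation in the proof of Lemma~\ref{lem-estimate}); and (ii) the cutoff is $1$-Lipschitz, so by the reverse triangle inequality $|g_{\alpha,R}(u)-g_{\alpha,R}(\tilde u)|\le\big|\,\|u\|_{H^{-\alpha}}-\|\tilde u\|_{H^{-\alpha}}\,\big|\le\|w\|_{H^{-\alpha}}$, with a constant independent of $R$.

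For the term $A$, I would combine (ii) with the quadratic bound $\|\nabla\cdot[u\nabla^{-1}u]\|_{H^{-\alpha-1}}\lesssim\|u\nabla^{-1}u\|_{H^{-\alpha}}\lesssim\|u\|_{L^2}^2$, giving $\|A\|_{H^{-\alpha-1}}\lesssim\|w\|_{H^{-\alpha}}\|u\|_{L^2}^2$. The product estimate for $u\nabla^{-1}u$ I would get by duality: for a vector test field $\varphi\in H^\alpha$, $\langle u\nabla^{-1}u,\varphi\rangle=\langle u,\nabla^{-1}u\cdot\varphi\rangle\le\|u\|_{L^2}\|\nabla^{-1}u\cdot\varphi\|_{L^2}$, and then Lemma~\ref{lem-estimate}(c) together with the mapping property $\nabla^{-1}\colon H^{-\alpha}\to H^{1-\alpha}$ yields $\|\nabla^{-1}u\cdot\varphi\|_{L^2}\lesssim\|\varphi\|_{H^\alpha}\|\nabla^{-1}u\|_{H^{1-\alpha}}\lesssim\|\varphi\|_{H^\alpha}\|u\|_{H^{-\alpha}}\le\|\varphi\|_{H^\alpha}\|u\|_{L^2}$.

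For the term $B$ I would use the bilinear splitting $u\nabla^{-1}u-\tilde u\nabla^{-1}\tilde u=w\,\nabla^{-1}\tilde u+u\,\nabla^{-1}w$ and $|g_{\alpha,R}(\tilde u)|\le1$, so that by (i) it suffices to control $\|w\,\nabla^{-1}\tilde u\|_{H^{-\alpha}}$ and $\|u\,\nabla^{-1}w\|_{H^{-\alpha}}$. For the first I would exploit that $\tilde u\in H^1$ gives $\nabla^{-1}\tilde u\in H^2\hookrightarrow C^s$ for some $s\in(\alpha,1)$; the paraproduct fact used in the proof of Lemma~\ref{lem-estimate}(b) then gives $\|w\,\nabla^{-1}\tilde u\|_{H^{-\alpha}}\lesssim\|\nabla^{-1}\tilde u\|_{C^s}\|w\|_{H^{-\alpha}}\lesssim\|\tilde u\|_{H^1}\|w\|_{H^{-\alpha}}$. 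For the second, since $\nabla^{-1}w\in H^{1-\alpha}$ with norm $\lesssim\|w\|_{H^{-\alpha}}$ while $u\in L^2$, the product bound underlying Lemma~\ref{lem-estimate}(d) (duality plus Lemma~\ref{lem-estimate}(c)) gives $\|u\,\nabla^{-1}w\|_{H^{-\alpha}}\lesssim\|u\|_{L^2}\|\nabla^{-1}w\|_{H^{1-\alpha}}\lesssim\|u\|_{L^2}\|w\|_{H^{-\alpha}}$. Collecting the three contributions and using $\|u\|_{L^2}\le1+\|u\|_{L^2}^2$ to absorb the linear term yields \eqref{eq:estim-transport-keller}.

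I expect the main obstacle to be the first piece of $B$: multiplying the rough factor $w\in H^{-\alpha}$ by $\nabla^{-1}\tilde u$ is borderline if one only knows $\tilde u\in L^2$, since then $\nabla^{-1}\tilde u\in H^1$, which fails to embed into $C^s$ in dimension two. It is precisely the extra regularity $\tilde u\in H^1$ --- mirroring the asymmetric role of $\tilde\omega$ in \eqref{eq:estim-transport-euler} --- that upgrades $\nabla^{-1}\tilde u$ to a H\"older multiplier and makes the bound linear in $\|\tilde u\|_{H^1}$. Keeping this asymmetry in the splitting (rather than symmetrizing it) is the one delicate point of the argument; everything else reduces to the product and divergence estimates already recorded in Lemma~\ref{lem-estimate}.
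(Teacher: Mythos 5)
Your proof is correct and follows essentially the same route as the paper's: the same splitting into the cutoff-difference term and the bilinear term, the same asymmetric decomposition $u\nabla^{-1}w+w\nabla^{-1}\tilde u$, and the same product estimates from Lemma \ref{lem-estimate} (duality plus point (c) for the rough factor, the paraproduct/point (b) mechanism for the $H^2$ multiplier $\nabla^{-1}\tilde u$). The only cosmetic difference is that you absorb the linear term $\|u\|_{L^2}$ into $1+\|u\|_{L^2}^2$ explicitly, which is exactly how the stated right-hand side arises.
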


\begin{proof}
It holds $\| F(u)-F(\tilde{u})\|_{H^{-\alpha-1}} \leq I_1 + I_2$ for
\begin{align*}
I_1 = |g_{\alpha,R} (u) - g_{\alpha,R}(\tilde{u})|\, \| \nabla\cdot [u\nabla^{-1} u]\|_{H^{-\alpha-1}}, \quad
I_2 = |g_{\alpha,R}(\tilde{u})|\, \| \nabla\cdot [u\nabla^{-1} u - \tilde{u} \nabla^{-1}\tilde{u}] \|_{H^{-\alpha-1}}.
\end{align*}
The first term can be estimated by
\begin{align*}
I_1 & \leq \| g_R\|_{Lip} \| u-\tilde{u}\|_{H^{-\alpha}} \| u\nabla^{-1} u\|_{H^{-\alpha}}\\
& \lesssim_\alpha \| u-\tilde{u}\|_{H^{-\alpha}} \| u\|_{L^2} \| \nabla^{-1} u\|_{H^{1-\alpha}}\\
&\lesssim_\alpha \| u-\tilde{u}\|_{H^{-\alpha}} \| u\|_{L^2}^2,
\end{align*}
where the second passage follows from a similar proof of Lemma \ref{lem-estimate}(d). Using the property $\|g_{\alpha,R}\|_{\infty}\leq 1$ and going through computations similar to Lemma \ref{lem:estim-transport-euler}, we have
\[\aligned
I_2 &\lesssim \|u\nabla^{-1} u - \tilde{u} \nabla^{-1}\tilde{u}\|_{H^{-\alpha}} \\
&\leq \|u\nabla^{-1} (u - \tilde{u})\|_{H^{-\alpha}} +  \|(u - \tilde{u}) \nabla^{-1} \tilde{u}\|_{H^{-\alpha}} \\
&\lesssim_\alpha \|u \|_{L^2} \| u - \tilde{u} \|_{H^{-\alpha}} + \| u - \tilde{u} \|_{H^{-\alpha}} \|\nabla^{-1} \tilde{u}\|_{H^2} \\
&\leq  \| u-\tilde{u}\|_{H^{-\alpha}} (\|u\|_{L^2}+ \|\tilde{u}\|_{H^1})
\endaligned \]
and the conclusion follows.
\end{proof}

\subsection{Proof of Theorem \ref{thm-3}}

\label{subsec-KS-proof}

We now fix parameters $\varepsilon, p, L, T$ and pass to the proof of main
theorem of this section. Given the constant $C$ as in Lemma
\ref{lem:basin-KS-new}, we fix $\kappa\geq C L^{2} + 1$; we also choose
parameters $\alpha= 1-\varepsilon/2$ and $R= 2L$. Rather than looking directly
at the solution to the SPDE \eqref{eq:stoch-keller-segel-new}, we will compare
the solution $u$ to
\begin{align*}
\mathrm{d} u  &  = \big\{ \Delta u + \bar{\rho}\, u - g_{\alpha,R}(u)
\nabla\cdot[u\nabla^{-1} u] \big\}\, \mathrm{d} t + \circ\mathrm{d}
W\cdot\nabla u\\
&  = \big\{ (1+\kappa) \Delta u + \bar{\rho}\, u - g_{\alpha,R}(u) \nabla
\cdot[u\nabla^{-1} u] \big\}\, \mathrm{d} t + \mathrm{d} W\cdot\nabla u
\end{align*}
and $\tilde{u}$ to
\[
\partial_{t} \tilde{u} = (1+\kappa)\Delta u + \bar{\rho}\, u - g_{\alpha,R}(u)
\nabla\cdot[u\nabla^{-1} u]
\]
for the choice of $\alpha, R, \kappa$ as above; the noise $W$ is determined by
$(\kappa,\theta)$ with $\kappa$ as above and $\theta\in\ell^{2}$ satisfying
usual assumptions. Both equations are considered with initial data $\rho_{0}$
satisfying $\| \rho_{0}\|_{L^{2}} \leq L$ (which implies $|\bar{\rho}|\,\vee\|
u_{0}\|_{L^{2}}\leq L$ as well).

It readily follows from Remark \ref{rem:keller-segel} that the solution
$\tilde{u}$ coincides with the one to \eqref{eq:keller-segel-enhanced-new}
which satisfies \eqref{eq:basin-KS-new}. Moreover our choice of parameters and
Lemma \ref{lem:keller-segel-cutoff-new} imply that
\begin{equation}
\label{eq:estim-keller-segel-cutoff-2}\sup_{t\in[0,T]}\| u_{t}\|_{L^{2}} \leq
K_{\varepsilon, L, T}:= e^{LT} \bigg[L + \bigg(\frac{C_{1- \varepsilon/2}%
}{\bar\rho} \bigg)^{1/2} \,(2L+1)^{4/\varepsilon}\bigg].
\end{equation}
In the following, we are still going to use the parameter $\alpha$, but we ask
the reader to keep in mind that it is given by $\alpha= 1-\varepsilon/2$.

With these preparations, we are now ready to give the

\begin{proof}[Proof of Theorem \ref{thm-3}]
First observe that, if $\sup_{t\in [0,T]} \| u_t\|_{H^{-\alpha}} \leq 2L=R$, then $u$ solves the stochastic Keller-Segel equation without cutoff and so $\tau(\rho_0; \kappa,\theta) \geq T$. In other terms
\[
\P(\tau(\rho_0; \kappa,\theta) < T) \leq \P\bigg( \sup_{t\in [0,T]} \| u_t\|_{H^{-\alpha}} > 2L \bigg);
\]
furthermore, under the condition $\kappa \geq C L^2 +1$, we know that $\tilde{u}$ is a solution to the deterministic PDE without cutoff and satisfies $\sup_{t\geq 0} \| \tilde u_t\|_{H^{-\alpha}} \leq \sup_{t\geq 0} \| \tilde u_t\|_{L^2} \leq L$. Set $\xi= u-\tilde{u}$, then by triangular inequality
$$ \| u\|_{H^{-\alpha}}\leq \| \xi\|_{H^{-\alpha}}+ \|\tilde{u}\|_{H^{-\alpha}} \leq \| \xi\|_{H^{-\alpha}}+ L;$$
therefore
\[
\P(\tau(\rho_0; \kappa,\theta) < T) \leq \P\bigg( \sup_{t\in [0,T]} \| \xi_t\|_{H^{-\alpha}} > L \bigg)\leq \frac{1}{L^p} \E \bigg[ \sup_{t\in [0,T]} \| \xi_t\|_{H^{-\alpha}}^p \bigg],
\]
where we applied Markov's inequality. It only remains to estimate the right-hand side.
Passing to the mild formulation as usual and defining $F$ as in Lemma \ref{lem:estim-transport-keller}, we can write the equation for $\xi$ as
\begin{equation*}
\xi_t = \int_0^t e^{(1+\kappa)(t-s)\Delta} \big[\bar{\rho}\, \xi_s - F(u_s) +F(\tilde{u}_s) \big]\, \d s + Z_t,
\end{equation*}
where
\begin{equation*}
Z_t = \int_0^t e^{(1+\kappa)(t-s)\Delta}\, \d W_s\cdot \nabla u_s.
\end{equation*}
By Lemmas \ref{lem:heat-kernel} and \ref{lem:estim-transport-keller} we can find a constant $C_\alpha = \tilde{C}_\eps$ such that
\begin{align*}
\| \xi_t\|_{H^{-\alpha}}^2
& \leq \frac{\tilde{C}_\eps}{1+\kappa}\int_0^t \| \xi_s\|_{H^{-\alpha}}^2 \big(1+ \bar{\rho}^2 + \|u_s\|_{L^2}^4 + \|\tilde{u}_s\|_{H^1}^2 \big)\, \d s + \| Z_t\|_{H^{-\alpha}}^2\\
& \leq \tilde{C}_\eps \int_0^t \| \xi_s\|_{H^{-\alpha}}^2 \big(1+L^2+ \|u_s\|_{L^2}^4 + \|\tilde{u}_s\|_{H^1}^2 \big)\, \d s + \| Z_t\|_{H^{-\alpha}}^2.
\end{align*}
Applying Gronwall's lemma, together with the estimates \eqref{eq:basin-KS-new} and \eqref{eq:estim-keller-segel-cutoff-2}, we get
\begin{align*}
\sup_{t\in [0,T]} \| \xi_t\|_{H^{-\alpha}} \leq \exp\Big(\tilde{C}_\eps \big[T(1+L^2 + K_{\eps,L,T}^4) + L^2 \big] \Big) \sup_{t\in [0,T]} \| Z_t\|_{H^{-\alpha}} =: K'_{\eps,L,T} \sup_{t\in [0,T]} \| Z_t\|_{H^{-\alpha}}.
\end{align*}
Finally, we can apply Corollary \ref{cor-stoch-convol} to $Z$ for the choice $\beta= \alpha$, $\tilde{\eps}= \eps/2$ (so that $\beta-\tilde{\eps} = 1-\eps$) and $\delta=1+\kappa\sim \kappa$ (recall that $\kappa\geq 1$) to obtain
\begin{align*}
\E \bigg[ \sup_{t\in [0,T]} \| \xi_t\|_{H^{-\alpha}}^p \bigg] \lesssim_{\eps,p, T}  (K'_{\eps,L,T})^p\, \kappa^{\eps p/4} \| \theta\|_{\ell^\infty}^{p(1-\eps)}.
\end{align*}
Combining everything together we arrive at
\begin{align*}
\P(\tau(\rho_0;\kappa,\theta))<T) \leq C_2\, \kappa^{\eps p/4} \| \theta\|_{\ell^\infty}^{p(1-\eps)}.
\end{align*}
where the constant $C_2=C_2(\eps,p,L,T)$ can be calculated explicitly in terms of the ones which appeared previously. Taking $C_1$ as the constant from Lemma \ref{lem:basin-KS-new} gives the conclusion.
\end{proof}

\section{Proofs in the linear case}\label{sec:transport}

This section consists of two parts: in Section \ref{subs-inviscid-transport} we prove Theorem \ref{thm-transport} and some related results, while in Section \ref{subs-dissip-enhanc-proof} we prove the exponential decay of $L^2$-norm at infinity for solutions to the transport-diffusion equation \eqref{heat transport eq}.

\subsection{Proofs of Theorem \ref{thm-transport} and related results}\label{subs-inviscid-transport}

We first provide the

\begin{proof}[Proof of Theorem \ref{thm-transport}]
Denote by $P_{t}=e^{t\kappa\Delta}$ the heat semigroup on $\T^d$; using the mild formulation of equations \eqref{stoch-transp-Ito} and \eqref{heat-eq}, we have%
\begin{align*}
\left\langle f_{t},\phi\right\rangle -\left\langle \overline{f}_{t}%
,\phi\right\rangle  &  = -\sqrt{C_d \kappa}\sum_{k,i}\theta
_{k}\,\int_{0}^{t}\left\langle P_{t-s}\left(  \sigma_{k,i}\cdot\nabla
f_{s}\right)  ,\phi\right\rangle \d W_{s}^{k,i}\\
&  = \sqrt{C_d \kappa}\sum_{k,i}\theta_{k}\,\int_{0}%
^{t}\left\langle f_{s},\sigma_{k,i}\cdot\nabla P_{t-s}\phi\right\rangle
\d W_{s}^{k,i}
\end{align*}
and thus %
\begin{align*}
\mathbb{E}\left[  \left\vert \left\langle f_{t},\phi\right\rangle
-\left\langle \overline{f}_{t},\phi\right\rangle \right\vert ^{2}\right]
&  = C_d \kappa\sum_{k,i}\theta_{k}^{2}\, \mathbb{E}\int_{0}%
^{t} | \langle f_{s},\sigma_{k,i}\cdot\nabla P_{t-s}\phi \rangle |^{2}\, \d s.
\end{align*}
Denote by $g_{s,t}(x)$ the function $f_{s}(x) \left(  \nabla P_{t-s}\phi\right)
(x) $; since $C_d = d/(d-1)\leq 2$ and $\{\sigma_{k,i}\}_{k,i}$ is an orthonormal system in $L^2(\T^d;\R^d)$, we have
\begin{align*}
\mathbb{E}\left[  \left\vert \left\langle f_{t},\phi\right\rangle
-\left\langle \overline{f}_{t},\phi\right\rangle \right\vert ^{2}\right]
& \leq 2 \kappa\sum_{k,i}\theta_{k}^{2}\, \mathbb{E}\int_{0}%
^{t}\left\vert\langle \sigma_{k,i}, g_{s,t}\rangle  \right\vert
^{2} \,\d s\\
& \leq 2\kappa\, \| \theta\|_{\ell^{\infty}}^2\,  \E \bigg[ \sum_{k,i}\int
_{0}^{t}\left\vert\langle \sigma_{k,i}, g_{s,t}\rangle  \right\vert^{2}\,\d s \bigg]%
 \\
& \leq2\kappa\,\Vert\theta\Vert_{\ell^{\infty}}^{2}\,\mathbb{E}\int_{0}%
^{t}\left\Vert f_{s}\left(  \nabla P_{t-s}\phi\right)  \right\Vert _{L^{2}%
}^{2} \, \d s.
\end{align*}
Now we use the $\P$-a.s. inequality $\Vert f_{t}\Vert_{L^{\infty}}\leq\Vert
f_{0}\Vert_{L^{\infty}}$ from \eqref{solu-bounds transp} to get
\[
\mathbb{E}\left[  \left\vert \left\langle f_{t},\phi\right\rangle
-\left\langle \overline{f}_{t},\phi\right\rangle \right\vert ^{2}\right]
\leq2\kappa\Vert\theta\Vert_{\ell^{\infty}}^{2}\Vert f_{0}\Vert_{L^{\infty}%
}^2 \int_{0}^{t}\left\Vert \nabla P_{t-s}\phi\right\Vert _{L^{2}}^{2} \,\d s.
\]
Finally,%
\begin{align*}
2\kappa\int_{0}^{t}\left\Vert \nabla P_{t-s}\phi\right\Vert _{L^{2}}^{2} \,\d s &
=-\int_{0}^{t}2\left\langle \kappa\Delta P_{s}\phi,P_{s}\phi\right\rangle \d s\
=\int_{0}^{t}\frac{\d}{\d s}\left\Vert P_{s}\phi\right\Vert _{L^{2}}^{2}%
\d s \leq\left\Vert \phi\right\Vert _{L^{2}}^{2}.
\end{align*}
This completes the proof of estimate \eqref{eq:thm-transport-eq1}; estimate \eqref{smeared} follows by taking, for every $x_{0}\in\mathbb{T}^{2}$,
$\phi_{x_{0}}\left(  x\right)  :=\chi\left(  x_{0}-x\right)  $; thus we get
\[
\mathbb{E}\left[  \left\vert \left(  \chi\ast f_{t}\right)  \left(  x_{0}\right)
-\left(  \chi\ast\overline{f}_{t}\right)  \left(  x_{0}\right)  \right\vert
^{2}\right]  \leq\Vert\theta\Vert_{\ell^{\infty}}^{2}\Vert f_{0}%
\Vert_{L^{\infty}}^{2}\Vert \chi\left(  x_{0}-\cdot\right)  \Vert_{L^{2}}^{2}.
\]
Integrating in $x_{0}$ we deduce the second inequality of the theorem.
\end{proof}

Compared to other results in this paper, Theorem \ref{thm-transport} has the nice feature that it does not produce any constants depending on $t$ or $\kappa$; this comes at the price of imposing higher regularity on the initial data $f_0\in L^\infty$ and obtaining a probabilistic estimate which depends on the given $f_0,\phi$ in consideration.

For this reason, we will now complement Theorem \ref{thm-transport} with another result quantifying the distance of the random solution operator associated to \eqref{stoch-transp-Ito} from the heat kernel operator $P_t=e^{t \kappa \Delta}$, in some weak norm.

Before giving the statement, we need some preparations. In the remainder of the section for simplicity we will assume $\theta$ to enjoy suitable summability (as before, $\sum_k |k|^2\theta_k^2<\infty$ would suffice), so that we can construct the incompressible stochastic flow $\{X_t\}_{t\geq 0}$ associated to $W$ and represent any solution $f$ to \eqref{stoch-transp-Ito} by $f_t(x) = f_0(X^{-1}_t(x))$.
We can then define the random solution operator $S_t \varphi := \varphi\circ X^{-1}_t$, which by incompressibility of $X_t$ is a family of isomorphisms of $L^p(\T^d)$ for any $p \in [1,\infty]$.

Next, let us recall that, given any two Hilbert spaces $E_1, E_2$, a linear operator $A:E_1\to E_2$ is Hilbert--Schmidt, $A\in \L^2(E_1,E_2)$, if
\begin{equation*}
\| A\|_{\L^2(E_1;E_2)}^2=\sum_n \| A \varphi_n\|_{E_2}^2<\infty
\end{equation*}
for some (equivalently any) $\{\varphi_n\}_n$ CONS of $E_1$; in this case $\| A\|_{E_1\to E_2} \leq \| A\|_{\L^2(E_1;E_2)}$.

\begin{proposition}\label{prop:fin-time-mixing}
For any $s>d/2$, $T>0$, $\alpha>0$, $p\in [2,\infty)$ and $\eps\in (0,\alpha)$ it holds
\begin{equation}\label{eq:inviscid-mixing}
\E\bigg[\sup_{t\in [0,T]} \| S_t - P_t \|_{\L^2(H^s,H^{-\alpha})}^p\bigg]^{1/p} \lesssim_{s,\eps,T,p} \kappa^{\eps/2}\, \| \theta\|_{\ell^\infty}^{2(\alpha-\eps)/d}.
\end{equation}
\end{proposition}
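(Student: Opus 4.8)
The plan is to recognise that the operator $S_t-P_t$ is exactly the stochastic convolution of Section~\ref{subsec-stoch-convol}, evaluated on an orthonormal basis, and then to estimate its Hilbert--Schmidt norm termwise. First I would pass to mild form. Writing \eqref{stoch-transp-Ito} as $\d f=\kappa\Delta f\,\d t-\d W\cdot\nabla f$ and using the mild formulation of Section~\ref{sec:mild-form} (with $F\equiv 0$ and $\delta=\kappa$, so $P_t=e^{\kappa t\Delta}$), the solution with initial datum $\varphi$ satisfies
\[
f_t[\varphi] = P_t\varphi + \int_0^t P_{t-s}\,\d M_s,\qquad M_t = -\int_0^t \nabla f_s[\varphi]\cdot\d W_s.
\]
Taking expectations kills the stochastic integral, so $\overline f_t[\varphi]=P_t\varphi$ and hence $(S_t-P_t)\varphi=f_t[\varphi]-P_t\varphi=-Z_t[\varphi]$, where $Z[\varphi]$ is precisely the stochastic convolution \eqref{eq:stoch-conv} with $\delta=\kappa$ and $\omega=f[\varphi]$. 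By the $L^2$-conservation \eqref{solu-bounds transp}, the process $f[\varphi]$ satisfies Assumption~\ref{ass:bddness} with $R=\|\varphi\|_{L^2}$, so Corollary~\ref{cor-stoch-convol} applies to each $Z[\varphi]$; note that with $\delta=\kappa$ one has $\sqrt{\kappa\delta^{\eps-1}}=\kappa^{\eps/2}$, which produces the prefactor in \eqref{eq:inviscid-mixing}.

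Next I would unfold the Hilbert--Schmidt norm along the orthonormal basis $\varphi_k=e_k/\langle k\rangle^s$ of $H^s$, where $\langle k\rangle=(1+4\pi^2|k|^2)^{1/2}$, obtaining
\[
\|S_t-P_t\|_{\L^2(H^s;H^{-\alpha})}^2 = \sum_k \|Z_t[\varphi_k]\|_{H^{-\alpha}}^2.
\]
Using $\sup_t\sum_k\le\sum_k\sup_t$ together with Minkowski's inequality in $L^{p/2}(\Omega)$ (legitimate since $p\ge 2$), I get
\[
\E\Big[\sup_{t\in[0,T]}\|S_t-P_t\|_{\L^2(H^s;H^{-\alpha})}^p\Big]^{1/p}
\le \Big(\sum_k \E\big[\sup_{t\in[0,T]}\|Z_t[\varphi_k]\|_{H^{-\alpha}}^p\big]^{2/p}\Big)^{1/2}.
\]
Applying Corollary~\ref{cor-stoch-convol} to each summand with $\beta=\alpha$ and $R=\|\varphi_k\|_{L^2}$ then bounds the right-hand side by a constant (depending on $\eps,p,T$) times $\kappa^{\eps/2}\,\|\theta\|_{\ell^\infty}^{2(\alpha-\eps)/d}\big(\sum_k\|\varphi_k\|_{L^2}^2\big)^{1/2}$.

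Finally, the residual factor is $\sum_k\|\varphi_k\|_{L^2}^2=\sum_k\langle k\rangle^{-2s}$, which is the squared Hilbert--Schmidt norm of the embedding $H^s\hookrightarrow L^2$; it is finite exactly when $s>d/2$, which produces the stated constant and explains both the hypothesis on $s$ and the extra $s$-dependence in $\lesssim_{s,\eps,T,p}$. For $\alpha\le d/2$ this closes the argument with $\beta=\alpha$; when $\alpha>d/2$ one first uses the monotonicity $\|\cdot\|_{H^{-\alpha}}\le\|\cdot\|_{H^{-\beta}}$ for a suitable $\beta\le d/2$ before invoking the corollary.

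I expect the main obstacle to be the correct bookkeeping of the three operations involved, namely the supremum in $t$, the sum over the basis, and the $L^p(\Omega)$-expectation: the clean termwise application of Corollary~\ref{cor-stoch-convol} hinges on first pulling the supremum inside the sum and only then using Minkowski, and this is the sole place where $p\ge2$ is needed. The second delicate point is purely analytic, namely guaranteeing that the basis sum $\sum_k\langle k\rangle^{-2s}$ converges, i.e.\ that $S_t-P_t$ is genuinely Hilbert--Schmidt out of $H^s$; this is what forces the loss of more than $d/2$ derivatives encoded in the hypothesis $s>d/2$.
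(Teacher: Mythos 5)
Your argument is correct and is essentially the paper's own proof: both identify $(S_t-P_t)\varphi$ with the stochastic convolution $Z_t[\varphi]$ via the mild formulation, expand the Hilbert--Schmidt norm along the weighted Fourier basis of $H^s$, pull the supremum inside the sum, apply Minkowski in $L^{p/2}(\Omega)$ (using $p\ge 2$) and then Corollary \ref{cor-stoch-convol} termwise with $R=\|\varphi_k\|_{L^2}$, with $s>d/2$ ensuring convergence of $\sum_k\langle k\rangle^{-2s}$. Your explicit remark on reducing $\alpha>d/2$ to $\beta\le d/2$ by monotonicity of the norms is a small point the paper leaves implicit, but otherwise the two proofs coincide.
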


\begin{proof}
Given $f$ solution to \eqref{stoch-transp-Ito} with initial data $f_0$, passing to mild formulation we have
\[
(S_t-P_t) f_0
= \int_0^t e^{(t-s)\kappa\Delta} \nabla f_s\cdot \d W_s =: Z^{f_0}_s
\]
which is a stochastic convolution as the ones treated in Section \ref{subsec-stoch-convol}. Moreover, given any CONS $\{\varphi_n\}_n$ of $H^s$, denoting by $Z^{\varphi_n}$ the associated processes, it holds
\begin{align*}
\sup_{t\in [0,T]} \| S_t - P_t \|_{\L^2(H^s,H^{-\alpha})}
\leq \bigg[ \sum_n \sup_{t\in [0,T]} \|(S_t - P_t)\,\varphi_n \|_{H^{-\alpha}}^2\bigg]^{1/2}
= \bigg[ \sum_n \sup_{t\in [0,T]} \| Z^{\varphi_n}_t\|_{H^{-\alpha}}^2\bigg]^{1/2}.
\end{align*}
Now choose as a CONS of $H^s$ the family $g_k = (1+|k|^2)^{-s/2} e_k$ for $k\in \Z^d_0$, then applying the above estimates, together with Minkowski's inequality and Corollary \ref{cor-stoch-convol}, we obtain
\begin{align*}
\E\bigg[ \sup_{t\in [0,T]} \| S_t - P_t \|_{\L^2(H^s,H^{-\alpha})}^p\bigg]^{2/p}
& \lesssim \sum_k \E \bigg[\sup_{t\in [0,T]} \| Z^{g_k}_t\|_{H^{-\alpha}}^p  \bigg]^{2/p}\\
& \lesssim_{\eps,\kappa,p,T} \kappa^{\eps} \| \theta\|_{\ell^\infty}^{4(\alpha-\eps)/d} \sum_k \| g_k\|_{L^2}^2\\
& \lesssim_{\eps,\kappa,p,T} \kappa^{\eps} \| \theta\|_{\ell^\infty}^{4(\alpha-\eps)/d} \sum_k(1+|k|^2)^{-s}
\end{align*}
which gives the conclusion.
\end{proof}

Compared to Theorem \ref{thm-transport}, estimate \eqref{eq:inviscid-mixing} depends on several parameters and requires the use of the strong norm $H^s$; but it gives a bound on the random operator $S_t$ and thus on
\begin{align*}
\langle f_t -\bar{f}_t,\phi\rangle
= \langle (S_t-P_t) f_0,\phi\rangle
= \int_{\T^d} f_0(x) \big(\phi (X^x_t) - \E[\phi(X^x_t)] \big)\, \d x
\end{align*}
uniformly over all possible $f_0\in H^s$, $\phi\in H^\alpha$ at once, thus revealing more information on the behaviour of the stochastic flow $X^x_t$ as well.

\begin{remark}
The property $\|\cdot\|_{H^s\to H^{-\alpha}} \lesssim \| \cdot\|_{\L^2(H^s;H^\alpha)}$, combined with estimate \eqref{eq:inviscid-mixing} and Markov's inequality, yields
\begin{equation*}
\P\bigg(\sup_{t\in [0,T]} \| S_t-P_t\|_{H^s\to H^{-\alpha}}>\delta\bigg) \lesssim \delta^{-p} \kappa^{\eps p/2} \| \theta\|_{\ell^\infty}^{2(\alpha-\eps)p/d}
\end{equation*}
for all $\delta >0$; in particular, for suitable chosen $(\kappa,\theta)$ the quantity $\sup_{t\in [0,T]} \| S_t-P_t\|_{H^s\to H^{-\alpha}}$ is very small with high probability. Moreover this can be attained while choosing $\kappa$ arbitrarily large, so that $\|P_t\|_{H^s\to H^{-\alpha}}$ becomes arbitrarily small as well (for $t\geq t_0> 0$), implying a probabilistic bound for $\|S_t\|_{H^s\to H^{-\alpha}}$ as well. Finally, interpolating the estimate on $\| S_t-P_t\|_{H^s\to H^{-\alpha}}$ with the $\P$-a.s. one $\|S_t-P_t\|_{L^2\to L^2}\leq 2$, we can deduce similar bounds for $\| S_t-P_t\|_{H^{s'}\to H^{-\alpha}}$ with $s'\in (0,s)$, thus removing the restriction $s>d/2$.
\end{remark}

\subsection{Proof of Theorem \ref{thm-transp-diffus}} \label{subs-dissip-enhanc-proof}

We first briefly recall the setting. We consider the linear transport-diffusion equation
\begin{equation*}
\d f + \circ \d W\cdot \nabla f = \nu \Delta f\, \d t
\end{equation*}
with $\nu>0$; it admits the It\^o formulation
  $$\d f + \d W\cdot \nabla f = (\kappa+\nu) \Delta f\, \d t. $$
For any $f_0\in L^2(\T^d)$, it is well known that the equation has a unique solution $f$ satisfying: $\P$-a.s., $f\in C([0,+\infty);L^2)\cap L^2(0,+\infty;{H}^1)$.

Below we assume $f_0$ has zero mean, a property preserved by the solution $\{f_t\}_{t\geq 0}$; set $P_t= e^{t(\kappa+\nu)\Delta}$. For any $0\leq s<t$, we have the mild formulation
  \begin{equation}\label{mild-form-1}
  f_t = P_{t-s} f_s + Z_{s,t},
  \end{equation}
where
  \begin{equation}\label{stoch-convol}
  Z_{s,t}:= - \sqrt{2\kappa} \sum_{k,i} \theta_k \int_s^t P_{t-r} (\sigma_{k,i}\cdot\nabla f_r)\,\d W^{k,i}_r;
  \end{equation}
and
  \begin{equation}\label{energy-equality}
  \P \mbox{-a.s.}, \quad \| f_t\|_{L^2}^2 + 2 \nu \int_s^t \| \nabla f_r\|_{L^2}^2\, \d r = \| f_s\|_{L^2}^2.
  \end{equation}
This implies that $t\to \| f_t\|_{L^2}$ is almost surely decreasing.

In order to get estimates on $\| f_t\|_{L^2}$, the key is to estimate $\| Z_{s,t} \|_{L^2}$; due to the linear structure, here we directly estimate $\E \|Z_{s,t} \|_{L^2}^2 $ without applying Gr\"onwall's lemma, contrary to the nonlinear case.

\begin{lemma}\label{lem-estim-interv}
There exists $\delta \in (0,1)$ such that, for any $n\geq 0$,
  $$\E \| f_{n+1} \|_{L^2}^2 \leq \delta\, \E \| f_n\|_{L^2}^2,$$
where, for some $0<\alpha <1 \leq \frac d2 <\beta$,
  $$\delta \lesssim_{\alpha, \beta} \kappa^{-1} + \kappa^{\frac{4\beta- \alpha(2\beta+d)}{4(\alpha+\beta)}} \nu^{-\frac{\beta}{\alpha+\beta}} \|\theta \|_{\ell^\infty}^{\frac{2\alpha}{\alpha+\beta}}. $$
In particular, $\delta$ can be as small as we want by first taking $\kappa$ big and then choosing $\theta\in \ell^2(\Z_0^d)$ with $\|\theta \|_{\ell^\infty}$ small enough.
\end{lemma}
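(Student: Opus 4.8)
The plan is to combine the mild formulation \eqref{mild-form-1}, the energy identity \eqref{energy-equality}, and the maximal bounds for the stochastic convolution \eqref{stoch-convol} from Section~\ref{subsec-stoch-convol}. First, by the time-homogeneous Markov property of \eqref{heat transport eq} it suffices to prove the one-step bound on $[0,1]$: conditioning on $\mathcal{F}_n$ and regarding $f_n$ as an ($\mathcal{F}_n$-measurable) initial datum, for which $\sup_{t\geq n}\|f_t\|_{L^2}\leq\|f_n\|_{L^2}$ a.s. by \eqref{energy-equality}, the estimate $\E[\|f_{n+1}\|_{L^2}^2\mid\mathcal{F}_n]\leq\delta\,\|f_n\|_{L^2}^2$ yields the claim upon taking expectations. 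Hence I assume $n=0$ with deterministic $f_0$.

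Since $\E Z_{0,1}=0$, the mild formula $f_1=P_1f_0+Z_{0,1}$ yields the variance decomposition $\E\|f_1\|_{L^2}^2=\|P_1f_0\|_{L^2}^2+\E\|Z_{0,1}\|_{L^2}^2$. As $f_0$ has zero mean and $P_t=e^{t(\kappa+\nu)\Delta}$, the spectral gap of $-\Delta$ gives $\|P_1f_0\|_{L^2}^2\leq e^{-8\pi^2(\kappa+\nu)}\|f_0\|_{L^2}^2\lesssim\kappa^{-1}\|f_0\|_{L^2}^2$, which is the first term of $\delta$.

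For the stochastic convolution I would interpolate $L^2$ between a weak and a strong Sobolev norm, exploiting the two distinct a priori controls on $f$: its pathwise $L^2$-bound and the dissipation it forces. With $\lambda=\alpha/(\alpha+\beta)$ one has $\|g\|_{L^2}\leq\|g\|_{H^{-\beta}}^{\lambda}\|g\|_{H^\alpha}^{1-\lambda}$, so by Hölder
\[
\E\|Z_{0,1}\|_{L^2}^2\leq\big(\E\|Z_{0,1}\|_{H^{-\beta}}^2\big)^{\alpha/(\alpha+\beta)}\big(\E\|Z_{0,1}\|_{H^\alpha}^2\big)^{\beta/(\alpha+\beta)}.
\]
The weak-norm factor is controlled as in Lemma~\ref{lem:estim-Z} (the $\ell^\infty$-estimate \eqref{eq:estim-Z-ellinfty}, with $R=\|f_0\|_{L^2}$): the Itô isometry, heat-kernel smoothing and the frequency-shift summation $\sum_k\|e_k f_r\|_{H^{-d/2-\varepsilon}}^2\lesssim\|f_r\|_{L^2}^2$ (available since $\beta>d/2$) extract a factor $\|\theta\|_{\ell^\infty}^2$, giving $\E\|Z_{0,1}\|_{H^{-\beta}}^2\lesssim\kappa(\kappa+\nu)^{-\rho}\|\theta\|_{\ell^\infty}^2\|f_0\|_{L^2}^2$ for a suitable smoothing order $\rho\in(0,1)$. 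For the strong-norm factor the dissipation enters: the Itô isometry together with $\|P_{1-r}(\sigma_{k,i}\cdot\nabla f_r)\|_{H^\alpha}\lesssim((\kappa+\nu)(1-r))^{-\alpha/2}\|\nabla f_r\|_{L^2}$ and $\sum_{k,i}\theta_k^2=d-1$ give
\[
\E\|Z_{0,1}\|_{H^\alpha}^2\lesssim\kappa(\kappa+\nu)^{-\alpha}\,\E\int_0^1(1-r)^{-\alpha}\|\nabla f_r\|_{L^2}^2\,\mathrm{d}r,
\]
which is to be compared with the budget $\E\int_0^1\|\nabla f_r\|_{L^2}^2\,\mathrm{d}r\leq\tfrac1{2\nu}\|f_0\|_{L^2}^2$ coming from \eqref{energy-equality}; this produces the factor $\nu^{-1}$. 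Feeding the two bounds into the interpolation, using $\kappa+\nu\geq\kappa$ and tuning $\rho$ and $\varepsilon$, the powers of $\kappa$, $\nu$ and $\|\theta\|_{\ell^\infty}$ combine into the stated exponents, and $\delta$ is then made arbitrarily small by first taking $\kappa$ large and afterwards $\|\theta\|_{\ell^\infty}$ small.

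I expect the strong-norm estimate to be the main obstacle. Reaching a positive-order space $H^\alpha$ forces parabolic smoothing, which produces the time-singular kernel $(1-r)^{-\alpha}$; this weight must be integrated against $\|\nabla f_r\|_{L^2}^2$, which is available only in $L^1$ in time through the energy identity. Keeping $\alpha<1$, so that $(1-r)^{-\alpha}$ is integrable, is essential, and carefully pairing this singular heat-kernel factor with the dissipation — the quantitative signature of enhanced dissipation — is the delicate heart of the argument.
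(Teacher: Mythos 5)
Your overall architecture matches the paper's: mild formulation, spectral gap for the deterministic part giving the $\kappa^{-1}$ term, and interpolation of $\|Z\|_{L^2}$ between $H^{\alpha}$ (fed by the dissipation budget $\E\int\|\nabla f_r\|_{L^2}^2\,\d r\lesssim\nu^{-1}\|f_0\|_{L^2}^2$) and $H^{-\beta}$ (fed by the $\ell^\infty$-type estimate extracting $\|\theta\|_{\ell^\infty}^2$). However, there is a genuine gap at exactly the point you flag as the ``delicate heart'' and leave unresolved: your strong-norm estimate reads
\[
\E\|Z_{0,1}\|_{H^\alpha}^2\lesssim\kappa(\kappa+\nu)^{-\alpha}\,\E\int_0^1(1-r)^{-\alpha}\|\nabla f_r\|_{L^2}^2\,\mathrm{d}r,
\]
and this cannot be ``compared with'' the budget $\E\int_0^1\|\nabla f_r\|_{L^2}^2\,\mathrm{d}r\leq\tfrac1{2\nu}\|f_0\|_{L^2}^2$, because the weight $(1-r)^{-\alpha}$ blows up precisely where the integrand is only controlled in $L^1$ in time. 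There is no H\"older or interpolation exponent available to absorb the singularity: the energy identity gives no better than $L^1_t$ integrability of $\|\nabla f_r\|_{L^2}^2$, so $\int_0^1(1-r)^{-\alpha}\|\nabla f_r\|_{L^2}^2\,\mathrm{d}r$ may be infinite for all you know. The integrability of $(1-r)^{-\alpha}$ alone (your remark that $\alpha<1$ is ``essential'') does not rescue the estimate.

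The paper's fix is a small but decisive change of the very first step: instead of evaluating at the single time $t=n+1$, it uses the a.s.\ monotonicity of $t\mapsto\|f_t\|_{L^2}$ to write $\|f_{n+1}\|_{L^2}^2\leq\int_n^{n+1}\|f_t\|_{L^2}^2\,\mathrm{d}t$, and then estimates $\int_n^{n+1}\E\|Z_{n,t}\|_{H^\alpha}^2\,\mathrm{d}t$ rather than $\E\|Z_{n,n+1}\|_{H^\alpha}^2$. After Fubini the singular kernel is integrated in $t$ first,
\[
\int_n^{n+1}\!\!\int_n^t (t-r)^{-\alpha}\|\nabla f_r\|_{L^2}^2\,\mathrm{d}r\,\mathrm{d}t
=\int_n^{n+1}\|\nabla f_r\|_{L^2}^2\bigg(\int_r^{n+1}(t-r)^{-\alpha}\,\mathrm{d}t\bigg)\mathrm{d}r
\leq\frac1{1-\alpha}\int_n^{n+1}\|\nabla f_r\|_{L^2}^2\,\mathrm{d}r,
\]
which is exactly the $L^1$-in-time quantity the energy identity controls. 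If you replace your pointwise decomposition $\E\|f_1\|_{L^2}^2=\|P_1f_0\|_{L^2}^2+\E\|Z_{0,1}\|_{L^2}^2$ (which is otherwise fine, and slightly cleaner than the paper's $(a+b)^2\leq 2a^2+2b^2$) by this time-averaged version, the rest of your argument goes through and reproduces the stated exponents.
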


\begin{proof}
Since $\| f_t\|_{L^2}$ is decreasing in $t$, we have, by \eqref{mild-form-1},
  $$\| f_{n+1} \|_{L^2}^2 \leq \int_{n}^{n+1} \|f_t \|_{L^2}^2 \,\d t \leq 2 \int_{n}^{n+1} \|P_{t-n} f_n \|_{L^2}^2 \,\d t + 2 \int_{n}^{n+1} \|Z_{n,t} \|_{L^2}^2 \,\d t. $$
First,
  \begin{equation}\label{lem-estim-interv.0}
  \int_{n}^{n+1} \|P_{t-n} f_n \|_{L^2}^2 \,\d t \leq \int_{n}^{n+1} e^{-8\pi^2 (\kappa+\nu)(t-n)} \|f_n \|_{L^2}^2 \,\d t \lesssim \frac{\|f_n \|_{L^2}^2}{\kappa+\nu}.
  \end{equation}
Next, we turn to estimate the second term for which we use an interpolation argument. Fix an $\alpha\in (0,1)$, we have, by \eqref{stoch-convol},
  $$\aligned
  \int_{n}^{n+1} \E \|Z_{n,t} \|_{H^\alpha}^2 \,\d t &= 2\kappa \int_{n}^{n+1} \E \bigg\|\sum_{k,i} \theta_k \int_n^t P_{t-r} (\sigma_{k,i}\cdot\nabla f_r)\,\d W^{k,i}_r \bigg\|_{H^\alpha}^2 \,\d t \\
  &\lesssim \kappa \int_{n}^{n+1} \E \bigg[\sum_{k,i} \theta_k^2 \int_n^t \| P_{t-r} (\sigma_{k,i}\cdot\nabla f_r) \|_{H^\alpha}^2 \,\d r \bigg] \,\d t \\
  &\lesssim \frac{\kappa}{(\kappa+\nu)^\alpha} \int_{n}^{n+1} \E \bigg[\sum_{k,i} \theta_k^2 \int_n^t \frac1{(t-r)^\alpha} \| \sigma_{k,i} \cdot\nabla f_r \|_{L^2}^2 \,\d r \bigg] \,\d t,
  \endaligned $$
where the last step follows from Lemma \ref{lem:heat-kernel-estim}(i). Using the fact $\| \sigma_{k,i} \cdot\nabla f_r \|_{L^2} \leq \| \nabla f_r \|_{L^2}$, we obtain
  $$\aligned
  \int_{n}^{n+1} \E \|Z_{n,t} \|_{H^\alpha}^2 \,\d t &\lesssim \frac{\kappa}{(\kappa+\nu)^\alpha} \|\theta \|_{\ell^2}^2 \int_{n}^{n+1} \E \int_n^t \frac1{(t-r)^\alpha} \| \nabla f_r \|_{L^2}^2 \,\d r \d t \\
  &\leq \kappa^{1-\alpha} \|\theta \|_{\ell^2}^2\, \E \int_{n}^{n+1} \| \nabla f_r \|_{L^2}^2\, \d r \int_r^{n+1} \frac1{(t-r)^\alpha} \, \d t \\
  &\leq \frac{\kappa^{1-\alpha}}{1-\alpha} \E \int_{n}^{n+1} \| \nabla f_r \|_{L^2}^2\, \d r,
  \endaligned $$
where in the last step we have used $\|\theta \|_{\ell^2}=1$. Now by \eqref{energy-equality} we arrive at
  \begin{equation}\label{lem-estim-interv.1}
  \int_{n}^{n+1} \E \|Z_{n,t} \|_{H^\alpha}^2 \,\d t \lesssim_\alpha \kappa^{1-\alpha} \nu^{-1} \E \|f_n\|_{L^2}^2.
  \end{equation}

Next, for $\beta> d/2$, we define $\eps=(2\beta -d)/4>0$; similarly to the proof of \eqref{eq:estim-Z-ellinfty}, we have
  $$\aligned
  \E \|Z_{n,t} \|_{H^{-\beta}}^2 &\leq 2\kappa\, \E\bigg[\sum_{k,i} \theta_k^2 \int_n^t \| P_{t-r}(\sigma_{k,i}\cdot \nabla f_r)\|_{H^{-d/2-2\eps}}^2 \,\d r \bigg] \\
  &\lesssim \frac{\kappa}{(\kappa +\nu)^{1-\eps}} \E\bigg[\sum_{k,i} \theta_k^2 \int_n^t \frac{\| \sigma_{k,i} \cdot \nabla f_r\|_{H^{-1-d/2-\eps}}^2}{(t-r)^{1-\eps}} \,\d r \bigg] \\
  &\lesssim_\eps \kappa^{\eps} \|\theta \|_{\ell^\infty}^2\, \E \|f_n\|_{L^2}^2 .
  \endaligned $$
Thus, noting that $\eps= (2\beta -d)/4$,
  \begin{equation}\label{lem-estim-interv.2}
  \int_{n}^{n+1} \E \|Z_{n,t} \|_{H^{-\beta}}^2 \,\d t \lesssim_\beta \kappa^{(2\beta -d)/4} \|\theta \|_{\ell^\infty}^2\, \E \|f_n\|_{L^2}^2.
  \end{equation}

Finally, for $0<\alpha <1 \leq \frac d2 <\beta$, by interpolation
  $$\|\phi \|_{L^2} \lesssim \|\phi \|_{H^\alpha}^{\beta/(\alpha+\beta)} \|\phi \|_{H^{-\beta}}^{\alpha/(\alpha+\beta)}, \quad \forall\, \phi \in H^\alpha, $$
we have
  $$\aligned
  \int_{n}^{n+1} \E \|Z_{n,t} \|_{L^2}^2 \,\d t &\lesssim \int_{n}^{n+1} \E\Big( \|Z_{n,t} \|_{H^\alpha}^{2\beta/(\alpha+\beta)} \|Z_{n,t} \|_{H^{-\beta}}^{2\alpha/(\alpha+\beta)} \Big) \,\d t \\
  &\leq \bigg(\int_{n}^{n+1} \E \|Z_{n,t} \|_{H^\alpha}^{2} \,\d t \bigg)^{\frac{\beta}{\alpha+\beta}} \bigg(\int_{n}^{n+1} \E \|Z_{n,t} \|_{H^{-\beta}}^{2} \,\d t \bigg)^{\frac{\alpha}{\alpha+\beta}}
  \endaligned $$
by H\"older's inequality. Inserting \eqref{lem-estim-interv.1} and \eqref{lem-estim-interv.2} into this estimate leads to
  $$\int_{n}^{n+1} \E \|Z_{n,t} \|_{L^2}^2 \,\d t \leq \kappa^{\frac{4\beta- \alpha(2\beta+d)}{4(\alpha+\beta)}} \nu^{-\frac{\beta}{\alpha+\beta}} \|\theta \|_{\ell^\infty}^{\frac{2\alpha}{\alpha+\beta}} \E\|f_n\|_{L^2}^2. $$
Combining this estimate with \eqref{lem-estim-interv.0}, we complete the proof.
\end{proof}

We can now provide

\begin{proof}[Proof of Theorem \ref{thm-transp-diffus}]
Lemma \ref{lem-estim-interv} implies that there exists a small $\delta \in (0,1)$ such that for any $n\geq 1$,
  $$\E \|f_n \|_{L^2}^2 \leq \delta\, \E \|f_{n-1} \|_{L^2}^2 \leq \cdots \leq \delta^n \|f_0 \|_{L^2}^2. $$
Recall that $t\to \|f_t \|_{L^2}$ is $\P$-a.s. decreasing, we have
  $$\E \bigg(\sup_{t\in [n,n+1]} \|f_t \|_{L^2}^2 \bigg) \leq \E \|f_n \|_{L^2}^2 \leq \delta^n \|f_0 \|_{L^2}^2 = e^{-2\lambda' n} \|f_0 \|_{L^2}^2, $$
where $\lambda'= -\frac12 \log \delta >0$. By Lemma \ref{lem-estim-interv}, we can choose a suitable pair $(\kappa,\theta)$ such that $\lambda'> \lambda(1+p/2)$, where $\lambda>0$ and $p\geq 1$ are parameters in the statement of Theorem \ref{thm-transp-diffus}.

Now for any $n\geq 1$, we define
  $$A_n = \bigg\{\omega\in \Omega: \sup_{t\in [n,n+1]} \|f_t(\omega) \|_{L^2} > e^{-\lambda n} \|f_0 \|_{L^2} \bigg\}.$$
Then by Chebyshev's inequality,
  $$\sum_n \P(A_n) \leq \sum_n \frac{e^{2\lambda n}}{\|f_0 \|_{L^2}^2} \E\bigg(\sup_{t\in [n,n+1]} \|f_t \|_{L^2}^2 \bigg) \leq \sum_n e^{2(\lambda -\lambda')n} <+\infty, $$
therefore, by Borel-Cantelli lemma, for $\P$-a.e. $\omega\in \Omega$, there exists a big $N(\omega) \geq 1$ such that
  $$\sup_{t\in [n,n+1]} \|f_t(\omega) \|_{L^2} \leq e^{-\lambda n} \|f_0 \|_{L^2} \quad \forall\, n> N(\omega).$$
For $0\leq n\leq N(\omega)$, we have
  $$\sup_{t\in [n,n+1]} \|f_t(\omega) \|_{L^2} \leq \|f_n(\omega) \|_{L^2} = e^{\lambda n} e^{-\lambda n} \|f_n(\omega) \|_{L^2} \leq e^{\lambda N(\omega)} e^{-\lambda n} \|f_0 \|_{L^2}. $$
Thus, if we take $C(\omega)= e^{\lambda (1+N(\omega))} $, then it is easy to show that, $\P$-a.s. for all $t\geq 0$, $\|f_t(\omega) \|_{L^2} \leq C(\omega) e^{-\lambda t} \|f_0 \|_{L^2}$.

It remains to estimate the $p$-th moment of the random variable $C(\omega)$; to this end, we need to estimate the tail probability $\P(\{N(\omega) \geq k\})$. Note that $N(\omega)$ may be defined as the largest integer $n$ such that $\sup_{t\in [n,n+1]} \|f_t(\omega) \|_{L^2} > e^{-\lambda n} \|f_0 \|_{L^2}$; hence
  $$\{\omega\in \Omega: N(\omega) \geq k\} = \bigcup_{n=k}^\infty A_n. $$
Then, we have
  $$\P(\{N(\omega) \geq k\}) \leq \sum_{n=k}^\infty \P(A_n) \leq \sum_{n=k}^\infty e^{2(\lambda -\lambda')n} = \frac{e^{2(\lambda -\lambda')k}}{1- e^{2(\lambda -\lambda')}}. $$
As a result,
  $$\E e^{\lambda p N(\omega)} = \sum_{n=0}^\infty e^{\lambda p k} \P(\{N(\omega) = k\}) \leq \frac1{1- e^{2(\lambda -\lambda')}} \sum_{n=0}^\infty e^{\lambda p k} e^{2(\lambda -\lambda')k} < +\infty ,$$
where the last step is due to the choice of $\lambda'$. Therefore $C(\omega)$ has finite $p$-th moment.
\end{proof}

\bigskip

\noindent\textbf{Acknowledgements.}
The second author is funded by the DFG under Germany's Excellence Strategy - GZ 2047/1, project-id 390685813.
The last named author is grateful to the
financial supports of the National Key R\&D Program of China (No.
2020YFA0712700), the National Natural Science Foundation of China (Nos.
11688101, 11931004, 12090014) and the Youth Innovation Promotion Association, CAS (2017003).

\end{document}